
\documentclass[a4paper,reqno,12pt]{amsart}
\usepackage[utf8]{inputenc}


\usepackage{amsmath, amssymb, amsthm, epsfig}
\usepackage{hyperref, latexsym}
\usepackage{url}
\usepackage[mathscr]{euscript}
\usepackage{color}
\usepackage{harpoon}
\usepackage{url}
\usepackage{mathabx}
\usepackage{tikz}


\usepackage{fullpage} 
\usepackage{setspace}
\onehalfspacing
\usepackage{adjustbox}


\usepackage{mathtools}
\mathtoolsset{showonlyrefs}


\def\today{\ifcase\month\or
  January\or February\or March\or April\or May\or June\or
  July\or August\or September\or October\or November\or December\fi
  \space\number\day, \number\year}


\newtheorem{theorem}{Theorem}

\newtheorem{lemma}[theorem]{Lemma}
\newtheorem{proposition}[theorem]{Proposition}
\newtheorem{remark}[theorem]{Remark}
\newtheorem{definition}[theorem]{Definition}


\newcommand{\A}{\mathcal{A}}

\newcommand{\C}{\mathcal{S}}   

\newcommand{\E}{\mathcal{E}}
\newcommand{\F}{\mathcal{F}}

\renewcommand{\H}{\mathcal{H}}

\newcommand{\J}{\mathcal{J}}

\renewcommand{\L}{\mathcal{L}}
\newcommand{\M}{\mathcal{M}}
\newcommand{\Real}{\mathbb{R}}
\renewcommand{\S}{\mathcal{S}}
\newcommand{\T}{\mathcal{T}}



\newcommand{\CC}{\mathfrak{C}}

\newcommand{\GG}{\mathfrak{G}}
\newcommand{\HH}{\mathfrak{H}}

\newcommand{\MM}{\mathfrak{M}}

\newcommand{\RR}{\mathfrak{R}}

\newcommand{\TTT}{\mathbb{T}}


\newcommand{\z}{\mathbb{Z}}

\renewcommand{\r}{\mathbb{R}}
\newcommand{\cp}{\mathbb{C}} 

\newcommand{\re}{{\rm Re}\,}
\newcommand{\ft}{\widehat}
\newcommand{\s}{\mathbb{S}}


\newcommand{\bo}{\boldsymbol}

\newcommand{\bn}{{\bo n}}

\newcommand{\wt}{\widetilde}



\newcommand{\la}{\lambda}
\newcommand{\ga}{\gamma}
\newcommand{\al}{\alpha}
\newcommand{\be}{\beta}
\newcommand{\ep}{\varepsilon}

\newcommand{\si}{\sigma}

\newcommand{\p}{\phi}
\newcommand{\dist}{{\rm dist}}

\newcommand{\ov}{\overline}


\newcommand{\Braket}[1]{\left\langle #1\right\rangle}
\newcommand{\Hcal}{\mathcal{H}}
\newcommand{\Scal}{\mathcal{C}_\star}

\newcommand{\Gcal}{\mathcal{G}}
\newcommand{\Pcal}{\mathcal{P}}
\newcommand{\Hdot}{\dot{H}}
\newcommand{\norm}[1]{\lVert #1 \rVert}
\newcommand{\abs}[1]{\left\lvert #1 \right\rvert}
\newcommand{\R}{\mathbb{R}}
\newcommand{\SSS}{\mathbb{S}}
\newcommand{\Z}{\mathbb{Z}}
\newcommand{\fbold}{\boldsymbol{f}}
\newcommand{\gbold}{\boldsymbol{g}}

\newcommand{\sqrtDelta}{\sqrt{-\Delta}}
\newcommand{\DeltaS}{\Delta_{\SSS^d}}
\newcommand{\nud}{\nu_d}
\newcommand{\N}{\mathbb{N}}
\newcommand{\Fhat}{\hat{F}}

\newcommand{\dTu}{\underline{dT}}
\newcommand{\dTp}{\underline{dT}}
\newcommand{\dSu}{\underline{dS}}

\newcommand{\Ical}{\mathcal{I}}
\newcommand{\Fcal}{\mathcal{F}}
\newcommand{\eps}{\epsilon}
\newcommand{\psitilde}{{\widetilde{\psi}}}

\newcommand{\obold}{\boldsymbol{0}}
\newcommand{\rad}{{\rm rad}}


\begin{document}


\title[]{Local Maximizers of Adjoint Fourier Restriction Estimates for the Cone, Paraboloid and Sphere}
\author[Gon\c{c}alves and Negro]{Felipe Gon\c{c}alves and Giuseppe Negro}
\date{\today}
\address{Felipe Gon\c{c}alves, Hausdorff Center for Mathematics, Universit\"at Bonn,
Endenicher Allee 60,
53115 Bonn, Germany }
\email{goncalve@math.uni-bonn.de}
\address{Giuseppe Negro, School of Mathematics, The Watson Building, University of Birmingham, Edgbaston, Birmingham, B15 2TT, England}
\email{g.negro@bham.ac.uk}
\subjclass[2010]{}
\keywords{}
\allowdisplaybreaks
\numberwithin{theorem}{section}
\numberwithin{equation}{section}


\begin{abstract}
We show that, possibly after a compactification of spacetime, constant functions are local maximizers of the Tomas-Stein adjoint Fourier restriction inequality for the cone and paraboloid in every dimension, and for the sphere in dimension up to 60. For the cone and paraboloid we work from the PDE framework, which enables the use of the Penrose and the Lens transformations, which map the conjectured optimal functions into constants.
\end{abstract}


\maketitle

\section{Introduction}

We consider the Fourier adjoint restriction inequality in the Tomas-Stein endpoint. The general framework is the following; letting $\M\subset \r^{d+1}$ denote a smooth surface with at least $k$ non-vanishing principal curvatures and natural measure $\mu$, and $p = 2+\tfrac{4}{k}$, there is a $C>0$ such that 
\begin{equation}\label{restrictionineq}
\lVert \ft {\mu f} \rVert_{L^p(\r^{d+1})}\le C  \lVert f\rVert_{L^2(\M)}
\end{equation}
for every smooth $f:\M\to \cp$, where 
$$
\ft{\mu f}(x) = \int_\M f(y)e^{-ix\cdot y}d\mu(y).
$$
See~\cite{G,S,To}. Only for a few surfaces $\M$, in specific dimensions, the smallest possible constant $C$ in~\eqref{restrictionineq} and the functions that attain it are known; for many other cases, these are merely conjectured. 

In this paper, we establish a local version of some of these conjectures, according to the following general strategy. We let $\E \subset L^2(\M)$ denote the set of the conjectured extremizers of \eqref{restrictionineq} and we prove that, for all $f$ in a neighborhood of $\E$, 
$$
c\,\dist(f,\E)^2\leq C_\star^2\lVert f\rVert_{L^2(\M)}^2-\lVert \ft {\mu f} \rVert_{L^p(\r^{d+1})}^2 ,
$$
where $c>0$ and $C_\star$ is the conjectured sharp constant. In a few cases, we can prove that this last inequality holds for all $f\in L^2(\M)$, thus establishing a sharpened version of~\eqref{restrictionineq}.

It is well known that estimate \eqref{restrictionineq} is intrinsically connected with dispersion estimates for PDEs, known as Strichartz inequalities; for instance, the Fourier extension operators on the double cone and the paraboloid are connected with solutions of the Wave and Schr\"odinger equations respectively. For more on this see the survey \cite{FO}. 

We proceed by stating our main results.

\subsection{Wave equation - Cone adjoint Fourier restriction}
For $p:=2\frac{d+1}{d-1}$, we consider the estimate 
\begin{equation}\label{eq:StrichWave}
	\left\lVert\cos(t\sqrtDelta)f_0(x)+\frac{\sin(t\sqrtDelta)}{\sqrtDelta}f_1(x)\right\rVert_{L^p(\Real^{1+d})}\!\!\!\le C\norm{f_0, f_1}_{\Hdot^{\frac12}\times \Hdot^{-\frac12}(\Real^d)}, 
\end{equation}
where $t\in \Real, x\in \Real^d$ denote the standard coordinates on $\Real^{1+d}$, and 
\begin{equation}\label{eq:Honehalfnorm}
	\begin{array}{cc}
	\displaystyle \norm{f_0}_{\Hdot^{1/2}}^2:=\int_{\Real^d} \overline f_0\, \sqrtDelta f_0 \, dx, &\displaystyle \norm{f_1}_{\Hdot^{-1/2}}^2:=\int_{\Real^d} \overline f_1\, \frac{1}{\sqrtDelta}f_1\,\, dx.
	\end{array}
\end{equation}
In~\cite{Fo}, Foschi proved that, for $d=3$, the smallest possible value of the constant $C$ is 
\begin{equation}\label{eq:FoschiConstantConj}
	\begin{array}{cc}
		\Scal:=\norm{\cos (t\sqrtDelta) f_\star}_{L^p}=\sqrt{\frac{2}{d-1}} \pi^\frac{1}p \lvert\SSS^d\rvert^{\frac1p-\frac12}, &\text{where } f_\star(x)=c(1+\abs x^2)^{-\frac{d-1}{2}}, 
	\end{array}
\end{equation}
and $c>0$ is chosen so that $\norm{f_\star}_{\Hdot^{1/2}}=1$ (see the forthcoming section~\ref{app:penrose} for the computation of the explicit value of $\Scal$). He conjectured that the same should be true for all dimensions $d\ge 2$, and he also conjectured what the functions $(f_0, f_1)$ that attain this sharp constant should be. 

Before stating our result, we observe that, in the forthcoming Proposition~\ref{prop:real_wave}, we will prove that there is no loss of generality in assuming that $f_0, f_1$ are real-valued. This is relevant to our purposes, because letting 
\begin{equation}\label{eq:complex_notation}
    \begin{array}{cc}
    f:=f_0-\frac{i}{\sqrtDelta} f_1, & \text{ where }f_0, f_1\text{ are real-valued,}
    \end{array}
\end{equation}
it holds that 
\begin{equation}\label{eq:real_consequences} 
    \begin{array}{ccc}
        \displaystyle \cos(t\sqrtDelta)f_0+\frac{\sin(t\sqrtDelta)}{\sqrtDelta}f_1 = \Re(e^{it\sqrtDelta} f), &\text{and}& \norm{f_0, f_1}_{\Hdot^{\frac12}\times \Hdot^{-\frac12}}=\norm{f}_{\Hdot^{\frac12}}.
    \end{array}
\end{equation}
We can thus define the~{deficit functional} of~\eqref{eq:StrichWave} to be 
\begin{equation}\label{eq:wave_deficit}
    \psi(f):=\Scal^2\norm{f}_{\Hdot^\frac12}^2-\norm{\Re(e^{it\sqrtDelta} f)}_{L^p}^2,
\end{equation}
observing that the conjecture of Foschi is equivalent to
\begin{equation}\label{eq:deficit_formulation}
    \begin{array}{cccc}
        \psi\ge 0, & \text{and} & \psi(f)=0 & \text{if and only if }  f\in \MM, 
    \end{array}
\end{equation}
where $\MM\subset \Hdot^{1/2}$ is the manifold of Foschi's conjectured maximizers, namely
\begin{equation}\label{eq:MManifWave}
	\MM:=\left\{ r\Gamma(e^{i\theta}f_\star)\ :\ r> 0,\ \theta\in\mathbb R, \ \Gamma\in \Gcal\right\}, 
\end{equation}
where $\Gcal$ is a Lie group of unitary transformations of $\Hdot^{1/2}$, which we will describe in Section~\ref{sect:wave}. The following is our first main result. 
\begin{theorem}\label{thm:main_wave}
	 Let $d$ be an odd integer. There exist $\delta\in(0,1)$ and $C>0$, depending only on~$d$, and such that
	 \begin{equation}\label{eq:main_wave_result}
	 	\psi(f)\ge C\dist(f, \MM)^2,
	 \end{equation}
	 provided that 
	\begin{equation}\label{eq:wave_distance_cond}
	    \dist(f, \MM)<\delta \norm{f}_{\Hdot^{1/2}(\Real^d)},
	\end{equation}
	where $\dist(f, \MM):=\inf\{ \norm{g-f}_{\Hdot^\frac12}\, :\, g\in \MM\}$. If $d=3$, condition~\eqref{eq:wave_distance_cond} can be removed.
\end{theorem}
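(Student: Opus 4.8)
The strategy is the standard one for establishing local coercivity of a deficit functional near a manifold of maximizers: a spectral analysis of the second variation, combined with the group-invariance structure. First I would reduce matters to a neighborhood of the base point $f_\star$: since $\psi$ is invariant (up to the scaling $r$ and phase $\theta$, and the action of $\Gcal$) under the group generated by $\Gcal$, scalings and phase rotations, and since $\dist(\cdot,\MM)$ is likewise invariant, it suffices to prove the estimate for $f$ in a small $\Hdot^{1/2}$-ball around $f_\star$ itself, after using the group action to move the nearest point of $\MM$ to $f_\star$ and rescaling so that $\norm{f}_{\Hdot^{1/2}}=1$. Writing $f=f_\star+g$ with $g$ small, I would Taylor-expand $\psi(f_\star+g)$ to second order in $g$. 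The first-order term vanishes because $f_\star$ is a critical point of $\psi$ (it is a conjectured maximizer, and by Foschi's computation in $d=3$, an actual one; for general odd $d$ one checks the Euler--Lagrange equation holds, which should follow from the Penrose-transform computation referenced in Section~\ref{app:penrose}). So the key object is the Hessian quadratic form $Q(g):=\tfrac12 \psi''(f_\star)[g,g]$, and the goal becomes: $Q(g)\ge c\norm{g}_{\Hdot^{1/2}}^2$ modulo the tangent directions to $\MM$ at $f_\star$, plus control of the cubic-and-higher remainder.

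The heart of the argument is therefore a spectral gap for $Q$. Here I expect the Penrose transform to do the decisive work: it conjugates the cone/wave problem on $\Real^{1+d}$ to a problem on (a compactification of) the cylinder $\Real\times\SSS^d$, under which $f_\star$ becomes a constant. The second variation at a constant of the corresponding functional on $\SSS^d$ should diagonalize in spherical harmonics: the quadratic form becomes a sum $\sum_\ell \lambda_\ell \norm{P_\ell g}^2$ over eigenspaces of $\DeltaS$, where the $\lambda_\ell$ are explicit constants built from the ratio of Gamma functions / the $L^p$-norm of products of eigenfunctions (these are the "$\SSS^d$ convolution" type integrals appearing in sharp-constant computations). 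The tangent space to $\MM$ corresponds precisely to the low-frequency modes killed by the symmetries — the constants ($\ell=0$, scaling and phase), the first spherical harmonics ($\ell=1$, coming from Lorentz boosts / translations inside $\Gcal$), and possibly $\ell=2$ modes from dilations — so one must show $\lambda_\ell>0$ strictly for all remaining $\ell$, and bounded below away from $0$. Verifying $\lambda_\ell\ge c>0$ uniformly in $\ell$ is, I expect, the main obstacle: one needs monotonicity or a good lower bound for a sequence defined through Gamma-function ratios, and the restriction to $d$ odd is surely what makes the Penrose transform clean (half-integer vs. integer regularity on the compactified spacetime) and the eigenvalue computation tractable in closed form.

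With the spectral gap in hand, the local estimate follows by a soft argument: decompose $g=g_{\mathrm{tan}}+g_{\perp}$ into the tangent part and its orthogonal complement; the tangent part is absorbed by re-optimizing the choice of nearest point in $\MM$ (so that effectively $g\perp T_{f_\star}\MM$ and $\norm{g}_{\Hdot^{1/2}}\le 2\dist(f,\MM)$, say); on the complement $Q(g_\perp)\ge c\norm{g_\perp}^2$; and the Taylor remainder is $O(\norm{g}^3)$ in $\Hdot^{1/2}$, which is where one needs a bit of care since $L^p$ and $\Hdot^{1/2}$ are not an algebra — this requires Strichartz/Sobolev-type multilinear estimates controlling $\norm{\Re(e^{it\sqrtDelta}g)}_{L^p}$ and its higher derivatives by $\norm{g}_{\Hdot^{1/2}}$, which are available in this endpoint range. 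Choosing $\delta$ small makes the cubic term a fraction of the quadratic one, yielding~\eqref{eq:main_wave_result}. Finally, for $d=3$ the condition~\eqref{eq:wave_distance_cond} can be dropped: here Foschi's theorem gives $\psi\ge 0$ globally with equality exactly on $\MM$, so a compactness/concentration-compactness argument (or the explicit knowledge that the only near-optimizers are near $\MM$) upgrades the local inequality to a global one — any sequence $f_n$ with $\psi(f_n)/\dist(f_n,\MM)^2\to 0$ would, after normalization and modulo symmetries, concentrate at $f_\star$, contradicting the local estimate just proved, unless it already lies in the regime where~\eqref{eq:main_wave_result} applies.
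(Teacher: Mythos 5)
Your plan is essentially the paper's proof: an abstract Taylor-expansion/spectral-gap reduction (the paper's Theorem~\ref{thm:general}), the Penrose transform to move $f_\star$ to the constant function on $\SSS^d$, diagonalization of the Hessian in spherical harmonics with the tangent space $T_{f_\star}\MM$ identified with the low modes, a Gamma-function monotonicity argument for the eigenvalue bound, and Bianchi--Egnell concentration-compactness to remove the distance condition when $d=3$. Two of your guesses about the details are off, and one of them is load-bearing. First, the parity restriction has nothing to do with ``half-integer vs.\ integer regularity'': the Penrose image of $\Real^{1+d}$ is a triangular region in $(T,R)$, not a product, and the paper converts the integral over this triangle into one over the full cylinder $\TTT\times\SSS^d$ using the symmetry $U(T+\pi,-X)=U(T,X)$ of $U=\lvert\Re(e^{iT\sqrt{\nud^2-\DeltaS}}F)\rvert^p$; this symmetry holds only for odd $d$ (and indeed for even $d$ the first variation at $f_\star$ does not even vanish, so the theorem is false there). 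If you attempted the computation believing the triangle were already a product, the orthogonality in $T$ you rely on would not be available. Second, the Hessian is \emph{not} purely diagonal in $\ell$: after expanding $\lvert\cos(\nud T)\rvert^{p-2}$ in a Fourier series, resonant terms $\Re\big(\Fhat(h\nud,m)^2\big)$ survive alongside the diagonal $\lvert\Fhat(\ell,m)\rvert^2$ terms, and the coercivity constant $\rho=1/(\nud+1)$ comes from bounding the worst resonance (at $h=1$) via log-convexity of $\Gamma$ — exactly the ``monotonicity of a Gamma-ratio sequence'' you anticipated, but applied to the off-diagonal contribution rather than to a sequence of eigenvalues. Also, the tangent space corresponds exactly to spherical harmonics of degree $\le 1$ (dilations land in degree $\le 1$ too), not ``possibly $\ell=2$''.

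On the remainder estimate: the paper does not need multilinear Strichartz bounds; since $S$ is bounded from $\Hdot^{1/2}$ to $L^p$ and $p>2$, the map $f\mapsto\lVert Sf\rVert_{L^p}^2$ is twice differentiable with a uniform $o(\lVert g\rVert^2)$ remainder, and the smallness of $\lVert f_\bot\rVert/\lvert z\rvert$ needed to absorb it is exactly what condition~\eqref{eq:wave_distance_cond} provides, via the elementary identity $\lVert f_\bot\rVert^2/\lvert z\rvert^2=\lVert f_\star\rVert^2\,x^2/(1-x^2)$ with $x=\lVert f_\bot\rVert/\lVert f\rVert$. Your $d=3$ upgrade matches the paper (Ramos's precompactness plus Foschi's uniqueness).
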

An immediate corollary is that the conjecture of Foschi is true, for all odd $d\ge 3$, in an open neighborhood of~$\MM$. The method of proof is based on the study of the first and second derivatives of $\psi$ on $\MM$, using in an essential way the conformal compactification of $\mathbb R^{1+d}$ provided by the Penrose transform. The case $d=3$ is special, because it is already known that $f_\star$ is a global minimizer for $\psi$. Because of this, we can extend the local analysis of the derivatives to a global result, and Theorem~\ref{thm:main_wave} holds without the condition~\eqref{eq:wave_distance_cond}. This is done via a concentration-compactness method dating back to Bianchi and Egnell~\cite{BiEg}, as already observed in~\cite{Negro18}. We remark that, unfortunately, the method of Bianchi and Egnell can only be applied to inequalities for which the extremizers are previously known. 

\begin{figure}
\centering
\begin{tikzpicture}[scale=1.5]
\fill[lightgray] (0:0) -- (10:3.1) -- (350:3.1) -- cycle;
\draw (0:0) -- (4:3);
\draw (0:0) -- (-4:3);
\path (4:3) -- (-4:3) node[midway] (center) {};
\draw (center) ellipse (2.5pt and 8*0.75pt);
\draw (0:3) node[anchor=west]{$\MM$};
\fill (0,0) circle (0.05) node[anchor=south] {$0$};
\fill (4:1.3) circle (0.05) node[anchor=south, yshift=0.7] {$f_\star$};
\end{tikzpicture}
\caption{The Foschi conjecture is true in the shaded region.}
\label{fig:LocalFoschi}
\end{figure}
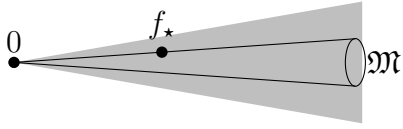

If $d\ge 2$ is even, then $f_\star$ is not a critical point of $\psi$; see the aforementioned~\cite{Negro18}. Thus the conjecture of Foschi fails in this case. However, Theorem~\ref{thm:main_wave} does hold for arbitrary $d\ge 2$ if $\psi$ is replaced by the half-wave functional 
\begin{equation}\label{eq:half_wave_deficit}
    \begin{array}{cc}
\displaystyle     \psi_h(f):=\mathcal{C}_h^2 \norm{f}_{\Hdot^\frac12}^2-\norm{e^{it\sqrtDelta} f}_{L^p}^2, & \text{ where } \mathcal{C}_h:=\norm{e^{it\sqrtDelta} f_\star}_{L^p}.
\end{array}
\end{equation}
For this functional, the condition~\eqref{eq:wave_distance_cond} can be removed for $d=2$ and $d=3$, the cases for which Foschi already proved that $f_\star$ is a global minimizer to $\psi_h$. The propagator $e^{it\sqrtDelta}$ corresponds to the half-wave equation $\partial_t u= i\sqrtDelta u$, or, equivalently, to Fourier adjoint restriction to the one-sheeted cone. It is interesting to note that the distinction between even and odd dimensions, needed for the study of maximizers to the wave Strichartz estimate~\eqref{eq:StrichWave}, does not seem to be needed for the case of the half-wave. To the best of our knowledge, the application of the Penrose transform to the half-wave propagator has not previously appeared in the literature.
%

\subsection{Schr\"odinger equation - Paraboloid adjoint Fourier restriction}
We consider here the following estimate (see, for example~\cite[Theorem 2.3]{Tao})
\begin{equation}\label{eq:SchrodStrichartz}
\|u\|_{L^{2+4/d}(\mathbb R^{d+1})}\le C\|f\|_{L^2(\mathbb R^d)},
\end{equation}
which holds true for some constant $C>0$, where $u:\r^{r+1}\mapsto\cp$ is the solution of the Schr\"odinger equation
\begin{align*}
& \partial_t u=i\Delta u \\
& u|_{t=0}=f
\end{align*}
with initial data $f$. The smallest possible value of $C$ is given by the supremum 
\begin{equation}\label{eq:SchrodRatio}
   \sup_{f\ne 0}\frac{\|u\|_{L^{2+4/d}(\mathbb R^{d+1})}}{\|f\|_{L^2(\mathbb R^d)}}, 
\end{equation}
and it is a long standing conjecture that $f$ attains this supremum if and only if it is a Gaussian, that is, $f(x)=\exp(a|x|^2 +b \cdot x + c)$, where $c,a\in \cp$, $b\in \cp^d$ and $\re a <0$. If that is the case, a simple calculation shows that such supremum equals
$$
\A_d=4^{-\frac{d}{8+4d}}\left(1+\frac{2}{d}\right)^{-\frac{d^2}{8+4d}}.
$$
We define the {deficit functional} of \eqref{eq:SchrodStrichartz} to be
$$
\p(f)=\A_d^{2}\|f\|_{L^2(\r^d)}^{2}- \|u\|^{2}_{L^{2+4/d}(\r^{d+1})},
$$
and the manifold of Gaussians to be
$$
\GG=\{e^{a|x|^2 +b\cdot x + c}: a,c\in \cp, b\in \cp^d \text{ and } \Re \, a <0\}\subset L^2(\mathbb R^d).
$$
We can now state our result.
\begin{theorem}\label{thm:main_schro}
There are $\delta,C>0$, depending only on $d$, and such that 
\begin{equation}\label{eq:SchrodingerConclusion}
\p(f)\geq C\dist(f,\GG)^2
\end{equation}
provided that 
\begin{equation}\label{eq:SchrodDistCond}
\dist(f,\GG)<\delta \|f\|_{L^2}.
\end{equation}
If $d=1$ or $d=2$ condition~\eqref{eq:SchrodDistCond} can be removed.
\end{theorem}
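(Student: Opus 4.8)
The plan is to mirror the strategy of Theorem~\ref{thm:main_wave}, transporting everything through the conformal symmetries of the Schrödinger equation. First I would record the action of the symmetry group on $L^2(\Real^d)$: space translations, Galilean boosts, $L^2$-scalings, modulations, and the pseudoconformal (lens) transformation, together generating the manifold $\GG$ from a single fixed Gaussian $g_\star(x)=e^{-|x|^2/2}$ (say). The key structural input is the lens transformation, which conjugates the Schrödinger propagator $e^{it\Delta}$ on a time interval to a \emph{compactified} picture, mapping the Gaussian $g_\star$ to a \emph{constant} (on the relevant compact model, e.g. via the Mehler kernel / Hermite semigroup representation). Under this change of variables the deficit $\p$ becomes, up to a positive multiplicative constant, the deficit of a sharp $L^2\to L^q$ inequality for which the extremizer is the constant function; this is the analogue of the role played by the Penrose transform in the cone case.

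Next I would compute the first and second variations of $\p$ at $g_\star$. Because $g_\star$ is a critical point (this follows from the known fact, proved by Hundertmark–Zharnitsky and Foschi for $d=1,2$, and conjecturally in general, that Gaussians are critical—but for the \emph{local} statement we only need that the first variation vanishes, which is a direct computation from the Euler–Lagrange equation satisfied by any putative extremizer, or can be verified by hand using Gaussian integrals and the explicit value of $\A_d$), the second variation $Q:=\tfrac12 D^2\p(g_\star)$ is a well-defined quadratic form on the tangent space $T_{g_\star}L^2$. Splitting $T_{g_\star}L^2 = T_{g_\star}\GG \oplus \mathcal N$, where $\mathcal N$ is the $L^2$-orthogonal complement of the tangent space to the Gaussian manifold, the core spectral claim is the coercivity estimate $Q(h)\ge c\|h\|_{L^2}^2$ for all $h\in\mathcal N$. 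Via the lens transform this reduces to a spectral gap for a Hermite-type operator: the second variation is diagonalized in the Hermite basis, the kernel is exactly spanned by the Gaussian-manifold directions (Hermite functions of degree $\le 2$, corresponding to translations, boosts, scaling, modulation, and the conformal direction), and all other Hermite modes carry a strictly positive eigenvalue. The main obstacle is precisely establishing this gap with the correct sign on every higher Hermite mode and confirming that the kernel is \emph{exactly} the symmetry directions with no extraneous zero modes; this is where the case analysis genuinely uses the structure of $\A_d$ and where, as in the cone case, one expects the computation to be clean only after the conformal compactification. (The restriction to $d=1,2$ for removing~\eqref{eq:SchrodDistCond} enters only in the final globalization step, not here.)

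Granting the coercivity of $Q$ on $\mathcal N$, the local estimate~\eqref{eq:SchrodingerConclusion} follows by a standard Taylor-expansion-plus-implicit-function-theorem argument: given $f$ with $\dist(f,\GG)<\delta\|f\|_{L^2}$, normalize and use the smooth nearest-point projection onto $\GG$ (well-defined in a tube, by the implicit function theorem applied to the orthogonality conditions) to write $f = g + h$ with $g\in\GG$ and $h\perp T_g\GG$, $\|h\|_{L^2}\sim\dist(f,\GG)$; by the symmetries, $\p(f)=\p(g+h)$ can be compared to $\p(g_\star + \tilde h)$ with $\tilde h$ in $\mathcal N$; Taylor expand $\p(g_\star+\tilde h) = Q(\tilde h) + o(\|\tilde h\|_{L^2}^2)$ (the error controlled uniformly, using that $\p$ is a smooth functional of $f$ on bounded sets because $2+4/d$ is an even integer only when $d\in\{1,2,4\}$—so in general one needs the continuity/differentiability of $f\mapsto\|u\|_{L^{2+4/d}}^2$ near a Gaussian, which holds since the $L^{2+4/d}$ norm of a Schrödinger solution is a smooth function of $L^2$ data away from zero by Strichartz and interpolation), and absorb the error into the coercivity for $\delta$ small. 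Finally, for $d=1,2$, where Gaussians are \emph{known} global extremizers, one removes~\eqref{eq:SchrodDistCond} by the Bianchi–Egnell concentration-compactness scheme exactly as invoked for $d=3$ in Theorem~\ref{thm:main_wave}: if the inequality failed globally there would be a minimizing sequence for $\p(f)/\dist(f,\GG)^2$; profile decomposition for the Schrödinger Strichartz inequality (available precisely in these low dimensions in the sharp form needed) forces such a sequence to concentrate on $\GG$, contradicting the local estimate just proved.
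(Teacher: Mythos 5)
Your high-level strategy coincides with the paper's: lens transform to compactify, verification that the first variation vanishes at the Gaussian, coercivity of the second variation on $(T_{g_\star}\GG)^\perp$, the abstract Taylor-expansion lemma (Theorem~\ref{thm:general}) for the local statement, and Bianchi--Egnell with the B\'egout--Vargas profile decomposition to remove~\eqref{eq:SchrodDistCond} for $d=1,2$. However, the central coercivity claim is precisely what you leave unproved, and your sketch of how it would go is partly incorrect. First, the Hessian is \emph{not} diagonalized by the Hermite basis: the lens transform only makes it block-diagonal in the total degree $|\bn|_1$ (via the factor $e^{-2\pi i|\bn|_1 s}$), and within a fixed degree the weight $|e^{it\Delta}g_\star|^{p-2}$ with $p-2=4/d$ couples the modes. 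The paper handles this by decomposing into spherical-harmonic sectors $H_k$ and proving, via the Funk--Hecke formula and a positive-definiteness argument for kernels built from Gegenbauer polynomials, that the relevant quadratic form is maximized on the radial sector (Steps 2--3); only in the radial sector does the Laguerre basis genuinely diagonalize the form. Second, your parenthetical identification of the kernel with ``Hermite functions of degree $\le 2$'' is false for $d\ge 2$: by~\eqref{eq:tangGG} the tangent space contains only the radial quadratic $|x|^2e^{-\pi|x|^2}$, so a mode like $x_1x_2e^{-\pi|x|^2}$ lies in $(T_{g_\star}\GG)^\perp$ and \emph{must} carry a strictly positive eigenvalue --- if all degree-$2$ modes were zero modes, the theorem would fail. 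Third, even after the radial reduction, positivity of the eigenvalues $1-c_m$ for all $m\ge2$ and all $d$ is not a soft spectral-gap fact: because the lens transform rescales the Laguerre argument by $2/p$, one must estimate $c_m=L_m^\nu(0)^{-1}\tfrac p2\int_0^\infty L_m^\nu(\tfrac2p r)^2\,d\mu(r)$, which the paper does via the dilation identity~\eqref{id:lagidlambda}, Jacobi-polynomial asymptotics, and a combinatorial bound with elementary symmetric functions (Step 5). None of this is routine, and you acknowledge as much (``the main obstacle is precisely establishing this gap''), so as it stands the proposal is an accurate outline with the decisive step missing.

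A minor correction: $p=2+4/d$ is an even integer only for $d=1,2$ (for $d=4$ one gets $p=3$), though this plays no role in the argument since the paper's Appendix~\ref{app:abstvar} computes the variations for general $p>2$.
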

    The method of proof is analogous to the one of the previous section; in this case, the compactifying transformation is a variant of the {Lens transform} (see, for example,~\cite{Tao2}), and we describe it in Section~\ref{sec:lens}. For $d=1, 2$ the removal of the condition~\eqref{eq:SchrodDistCond} is done via the aforementioned method of Bianchi and Egnell; the concentration-compactness tools in this case are due to B\'egout and Vargas~\cite{BeVa}.

\subsection{Sphere adjoint Fourier restriction}
The Tomas-Stein adjoint Fourier restriction inequality asserts that if $p=2\frac{d+1}{d-1}$ then
$$
\sup_{0\ne f\in L^2} \frac{\|\ft {f\sigma}\|_{L^p(\r^d)}}{\|f\|_{L^2(\s^{d-1})}}
 <\infty,
$$
where $\si$ is the surface measure of the sphere $\s^{d-1}$.
It is conjectured that the only real-valued functions attaining the supremum above are constants, or more generally, functions of the form $f(x)=a e^{ix\cdot v}$, where $a\in \cp\setminus \{0\}$ and $v\in \r^d$. In particular, a simple computation shows that constant functions are maximizers if and only if the supremum above equals
\begin{equation}\label{def:Sconst}
\C_d :=(2\pi)^{d/2}|\s^{d-1}|^{-\frac{1}{d+1}}\left(\int_0^\infty |J_{d/2-1}|^{p} \frac{dr}{r^{\frac{d-3}{d-1}}}\right)^{1/p};
\end{equation}
see the forthcoming~\eqref{hamonicextension}. This conjecture is still widely open and solved only in dimension~3 in a remarkable paper by Foschi \cite{Fo2}. Establishing that constants are locally optimal is usually a more treatable problem, however only in dimension ~2 this was proven \cite{CFOT}. All these results rely heavily on the fact that $p$ is an even integer, a nuance that only happens in dimensions ~2 and ~3. 

We define the deficit function
$$
\zeta(f) =  \C_d^2\|f\|_{L^2(\s^{d-1})}^2 - \|\ft {f\sigma}\|_{L^p(\r^d)}^2
$$
and let 
$$
\CC:=\{a e^{ix\cdot v} : a\in \cp\setminus\{0\} \text{ and } v\in \r^d\}.
$$
\begin{theorem}\label{thm:sphere}
Let $d\leq 60$. There is $\delta,C>0$, depending only on $d$, and such that 
$$
\zeta(f)\geq C \dist(f,\CC)^2
$$
provided that 
\begin{equation}\label{eq:localcondsphereineq}
\dist(f,\CC)<\delta \|f\|_{L^2(\s^{d-1})}.
\end{equation}
This last condition can be removed for $d=3$.
\end{theorem}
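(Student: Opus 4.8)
The plan is to follow the same local-analysis strategy used for the cone and paraboloid, but now working directly on the sphere where no compactifying transformation is needed (the sphere is already compact). First, I would set up the spectral framework: decompose $L^2(\SSS^{d-1})$ into spherical harmonics and recall, via the formula leading to~\eqref{def:Sconst}, that $\widehat{f\sigma}$ restricted to each degree-$n$ subspace is expressed through Bessel functions $J_{n+d/2-1}$. The constant function is degree $0$. I would then compute the first and second variations of $\zeta$ at a point of $\CC$; by the symmetries (modulation by $e^{ix\cdot v}$ and scaling by $a$), it suffices to analyze $\zeta$ at $f=1$. The first variation must vanish on the tangent space of $\CC$ — this is exactly the Euler--Lagrange-type condition that makes constants critical, and it should hold unconditionally since $p$ need not be an even integer here: the key point is that $\|\widehat{\sigma}\|_{L^p}$-type integrals only involve the radial profile and the degree-$0$ harmonic.

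The heart of the matter is the \emph{second} variation (the Hessian of $\zeta$ at $f=1$). Decomposing a perturbation $g=\sum_n g_n$ into spherical harmonics, the Hessian diagonalizes into a sequence of quadratic forms, one for each degree $n\geq 1$ (the degrees $0$ and the relevant directions generating $\CC$ being handled by the modulation/scaling symmetries, which produce the zero modes spanning $T\CC$). For each $n$ one obtains a number $\Lambda_n$ built from integrals of products like $|J_{d/2-1}|^{p-2}J_{d/2-1}J_{n+d/2-1}$ against the Bessel weight; positivity of the Hessian amounts to showing $\Lambda_n>0$ for all $n\geq 1$, with a uniform lower bound. I expect this is precisely where the restriction $d\leq 60$ enters: for large $n$ the dominant term is the $L^2$ contribution $\C_d^2$, which wins once $n$ is large compared to $d$, but checking the finitely many small $n$ requires either explicit Bessel asymptotics/monotonicity or a computer-assisted estimate, and the argument that the ``bad'' range is under control only closes for $d$ up to $60$. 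So the main obstacle is establishing the spectral gap $\inf_{n\geq 1}\Lambda_n>0$: one must bound the cross-integrals of Bessel functions from above and the diagonal $L^2$ term from below, uniformly, which likely needs careful estimates on $\int_0^\infty |J_{\mu}|^{p-2} J_\mu J_\nu\, r^{-(d-3)/(d-1)}\,dr$ for $\nu>\mu=d/2-1$.

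Once the Hessian is shown to be positive-definite transverse to $T\CC$ with a quantitative lower bound $C>0$, the local inequality $\zeta(f)\geq C\,\dist(f,\CC)^2$ under~\eqref{eq:localcondsphereineq} follows from a Taylor expansion argument: writing $f=g+h$ with $g\in\CC$ the near-projection and $h\perp T_g\CC$ small, expand $\zeta(f)$ to second order, absorb the third-order remainder using the smallness~\eqref{eq:localcondsphereineq} and the smoothness of $t\mapsto \|\,\cdot\,\|_{L^p}^2$ near a nonvanishing function, and use invariance of $\zeta$ under $\CC$ to reduce to $g=1$. This is the routine part, mirroring the corresponding steps in the proofs of Theorems~\ref{thm:main_wave} and~\ref{thm:main_schro}.

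For the final sentence — removing~\eqref{eq:localcondsphereineq} when $d=3$ — I would invoke the fact, due to Foschi~\cite{Fo2}, that in $d=3$ the constants are already known to be \emph{global} maximizers, so $\zeta\geq 0$ globally with equality exactly on $\CC$. Then the Bianchi--Egnell-type concentration-compactness scheme (as used for $d=3$ in the wave case and cited from~\cite{BiEg}, \cite{Negro18}) upgrades the local bound to a global one: if it failed, a minimizing sequence for $\zeta(f)/\dist(f,\CC)^2$ would, after symmetrization, either concentrate or spread, and in each scenario one extracts a contradiction — either converging to a point of $\CC$ where the local estimate applies, or forcing the deficit to stay bounded below while the distance does not. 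The one subtlety is verifying the requisite profile decomposition / precompactness modulo $\CC$ for the sphere extension operator at the Tomas--Stein exponent $p=8$ in $d=3$, which is available since $p$ is an even integer there and the relevant compactness is classical.
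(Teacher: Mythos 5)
Your outline matches the paper's proof in all essentials: spherical-harmonic diagonalization of the Hessian of $\zeta$ at $f=1$ via the Hecke--Bochner formula, reduction of coercivity to the Bessel-integral inequality $(p-1)c_k<c_0$ for $k\ge 2$ where $c_k=\int_0^\infty|A_\nu|^{p-2}A_{\nu+k}^2\,r^{2\nu+1+2k}dr$ (established for $d\le 60$ by Landau's uniform bound $r^{1/3}|J_\alpha(r)|<L$ for the tail in $k$ plus numerical integration of the finitely many remaining cases), and the concentration-compactness upgrade for $d=3$ using Christ--Shao precompactness and Foschi's uniqueness of maximizers. One small slip: for $d=3$ the relevant exponent is $p=2\frac{d+1}{d-1}=4$, not $8$ (still an even integer, so your closing remark stands).
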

The removal of the condition~\eqref{eq:localcondsphereineq} uses the concentration-compactness tools due to Christ and Shao~\cite{ChSh12}.

Our proof method involves a numerical part that becomes harder as the dimension grows. With more computational power and delicate estimates the threshold on the dimension could possibly be extended to something like $d\leq 200$ and we provide a systematic way of doing so. 
\section{Method of proof}\label{sec:method}

The outline of the proof is common to all cases, so we describe it here in general terms. Let $\mathcal{H}$ be a Hilbert space whose scalar product and norm we denote by $\langle\cdot|\cdot\rangle$ and $\lVert \cdot \rVert$ respectively, and consider the bounded linear operator 
\begin{equation}\label{eq:S_operator}
	\begin{array}{cc}
	\displaystyle	S\colon \mathcal{H}\to L^p(\mathbb R^N), & \text{where }p\in (2, \infty)\text{ and }\ N\in \mathbb N,
	\end{array}
\end{equation}
Let moreover
\begin{equation}\label{eq:G_group}
	\mathcal G:=\{ \Gamma_{\boldsymbol \theta}:\H\to\H \ :\ \boldsymbol\theta \in \mathbb R^k\}
\end{equation}
denote a group of linear transformations depending smoothly on $\boldsymbol \theta$, where $\boldsymbol{\theta}=\boldsymbol 0$. We assume that the following invariance properties are satisfied
\begin{equation}\label{eq:G_invariances}
	\begin{array}{ccc}
		\norm{\Gamma_{\boldsymbol \theta} f}=\norm{f}, &\text{and }\norm{S\Gamma_{\boldsymbol\theta} f}_{L^p}=\norm{Sf}_{L^p} ,& \forall f\in \mathcal H,\ \forall \boldsymbol{\theta}\in\mathbb R^k.
	\end{array}
\end{equation}
Finally, we also assume that $\Gamma_{\boldsymbol \theta}$ {vanishes at infinity}, in the sense that 
\begin{equation}\label{eq:vanish_infinity}
	\lim_{\abs{\boldsymbol \theta} \to \infty} 
	\langle \Gamma_{\boldsymbol \theta}f | g\rangle= 0, \quad \forall f, g\in \mathcal H.
\end{equation}
This concludes the list of the needed assumptions.  

We can now let $f_\star\in\mathcal H\setminus\{0\}$ be fixed and define the {deficit functional} 
\begin{equation}\label{eq:GeneralDeficit}
	\begin{array}{cc}
		\psi(f):=C_\star^2 \norm{f}^2 - \norm{Sf}_{L^p}^2, &\displaystyle \text{ where }C_\star:=\frac{\norm{S f_\star}_{L^p}}{\norm{f_\star}}.
	\end{array}
\end{equation}
By definition, $\psi(f_\star)=0$; hence, by~\eqref{eq:G_invariances}, $\psi$ vanishes on the set
\begin{equation}\label{eq:M_manifold}
	M:=\{ z\Gamma_{\boldsymbol \theta} f_\star\ : z\in\cp\setminus\{0\},\ \boldsymbol\theta\in \mathbb R^k\}\subset \mathcal H.
\end{equation}
By construction,  $M$ is a smooth manifold parametrized by the map 
$$
(z, \boldsymbol \theta)\in \cp\setminus\{0\}\times \mathbb R^{k}\mapsto z\Gamma_{\boldsymbol{\theta}} f_\star;
$$
so, for each $f\in M$, the corresponding tangent space is  
\begin{equation}\label{eq:Tangent_Spaces}	
	T_f M = \operatorname{span}_{\r} \left\{ f, if,\left.\frac{\partial}{\partial \theta_j}\right|_{\theta_j=0}\!\Gamma_{(0,\ldots, \theta_j, \ldots, 0)}f\ :\ j=1, \ldots, k\right\},
\end{equation}
and we denote its orthogonal complement by
\begin{equation}\label{eq:ortho_complement} 
	(T_f M)^\bot :=\left\{ g\in \mathcal H\ :\ \Braket{g, h}=0,\ \forall h\in T_f  M \right\}. 
\end{equation}
We remark that $\psi$ is twice real-differentiable, and we denote\begin{equation}\label{eq:derivative_def}
	\begin{array}{ccc}
		\psi'(f)g:=\left.\frac{\partial}{\partial \eps}\right|_{\eps=0} \psi(f+\eps g), & \psi''(f)(g, g):=\left.\frac{\partial^2}{\partial \eps^2} \right|_{\eps=0} \psi(f+\eps g), & \forall f,g\in \mathcal H.
	\end{array}
\end{equation}
\begin{theorem}\label{thm:general}
	Assume that $\psi'(f_\star)=0$ and that there is a $\rho >0$ such that 
	\begin{equation}\label{eq:positive_tangent}
		\begin{array}{cc}
			\psi''(f_\star)(f_\bot, f_\bot)\ge \rho \norm{f_\bot}^2,& \forall f_\bot \in (T_{f_\star} M)^\bot.
		\end{array}
	\end{equation}
	Then there is a $\delta \in(0, 1) $, depending only on $\rho$, and such that 
	\begin{equation}\label{eq:general_conclusion}
		\psi(f)\ge \frac{\rho}{3}\, \dist(f, M)^2, 
	\end{equation}
	for all $f\in \mathcal H$ such that 
	\begin{equation}\label{eq:dist_condition_general}
	\dist(f, M)<\delta \norm{f}.
	\end{equation}
\end{theorem}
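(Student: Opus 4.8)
The plan is to combine a local Taylor expansion of $\psi$ around the manifold $M$ with a compactness-free argument that exploits the invariances~\eqref{eq:G_invariances}. First I would observe that, by the scaling invariance $\psi(zf)=\abs z^2\psi(f)$ and the fact that $\psi$ vanishes on $M$, it suffices to control $\psi$ near a fixed point of $M$; more precisely, given $f$ with $\dist(f,M)<\delta\norm f$, I would pick $g\in M$ nearly realizing the distance, write $g=z\Gamma_{\boldsymbol\theta}f_\star$, and use the isometry $\Gamma_{-\boldsymbol\theta}$ together with the $\cp^\times$-action to reduce to the case $g=f_\star$, i.e. $f=f_\star+h$ with $\norm h$ comparable to $\dist(f,M)$ and $h$ \emph{almost} orthogonal to $T_{f_\star}M$. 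The reduction to exact orthogonality is the standard implicit-function-theorem step: the map $(z,\boldsymbol\theta)\mapsto \norm{f-z\Gamma_{\boldsymbol\theta}f_\star}^2$ attains its minimum at an interior point (for $\delta$ small), and at that minimizer $f-z\Gamma_{\boldsymbol\theta}f_\star \in (T_{z\Gamma_{\boldsymbol\theta}f_\star}M)^\bot$; transporting back, $h\in(T_{f_\star}M)^\bot$ exactly, with $\norm h=\dist(f,M)$.

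Next I would Taylor expand. Since $\psi(f_\star)=0$ and $\psi'(f_\star)=0$ by hypothesis, for $f=f_\star+h$ with $h\in(T_{f_\star}M)^\bot$ we get
\begin{equation}\label{eq:plan_taylor}
	\psi(f_\star+h)=\tfrac12\psi''(f_\star)(h,h)+R(h),
\end{equation}
where $R(h)=o(\norm h^2)$ as $\norm h\to 0$. The quadratic term is bounded below by $\tfrac\rho2\norm h^2$ thanks to~\eqref{eq:positive_tangent}. The point where I expect to spend real effort is making the error $R(h)$ genuinely quadratic with a \emph{uniform} constant: $\psi$ involves the $L^p$-norm to the second power, $\norm{Sf}_{L^p}^2=\bigl(\int\abs{Sf}^p\bigr)^{2/p}$, so I would first expand $\int\abs{S(f_\star+h)}^p$ using the pointwise inequality $\bigl|\,\abs{a+b}^p-\abs a^p-p\abs a^{p-2}\Re(\bar a b)-\tfrac{p}{2}\abs a^{p-2}\abs b^2 - \tfrac{p(p-2)}{2}\abs a^{p-4}(\Re\bar a b)^2\bigr|\lesssim \abs a^{p-3}\abs b^3+\abs b^p$ (valid for $p\ge 3$; for $2<p<3$ one uses the weaker cubic-type remainder $\abs a^{p-2}\abs b^2\wedge\abs b^p$ times a modulus of continuity), then use boundedness of $S$ and Hölder/interpolation to turn the $L^p$-integrals of these remainder terms into something like $\norm h^2\,\omega(\norm h)$ with $\omega(t)\to0$. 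Finally, $x\mapsto x^{2/p}$ is smooth away from $0$ and $\norm{Sf_\star}_{L^p}>0$, so composing preserves the $o(\norm h^2)$ control. Thus there is $\delta_0>0$ with $\abs{R(h)}\le \tfrac\rho6\norm h^2$ whenever $\norm h\le\delta_0\norm{f_\star}$, hence
\begin{equation}\label{eq:plan_lower}
	\psi(f)\ge\Bigl(\tfrac\rho2-\tfrac\rho6\Bigr)\norm h^2=\tfrac\rho3\norm h^2=\tfrac\rho3\dist(f,M)^2.
\end{equation}

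It remains to choose $\delta$. From $\dist(f,M)<\delta\norm f$ I need $\norm h=\dist(f,M)\le\delta_0\norm{f_\star}$; but $\norm h$ is comparable to $\dist(f,M)$ which is small relative to $\norm f$, and after the normalization-by-scaling step one can assume $\norm{f_\star}=1$ and $\norm f$ close to $1$, so $\norm h<2\delta$, and picking $\delta<\delta_0/2$ suffices. The only genuine subtlety, besides the remainder estimate above, is the interior-minimum claim in the reduction step: this needs $\boldsymbol\theta$ to stay in a compact set, which is exactly where the vanishing-at-infinity hypothesis~\eqref{eq:vanish_infinity} enters — if $\abs{\boldsymbol\theta}\to\infty$ along a minimizing sequence then $\langle \Gamma_{\boldsymbol\theta}f_\star\mid f\rangle\to0$, forcing $\norm{f-z\Gamma_{\boldsymbol\theta}f_\star}^2\to\abs z^2+\norm f^2\ge\norm f^2$, which is not small, contradicting $\dist(f,M)<\delta\norm f<\norm f$. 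Hence the minimizing $\boldsymbol\theta$ lies in a fixed compact set and the orthogonality projection is legitimate. The main obstacle, to summarize, is purely the uniform second-order Taylor control of the $L^p$-functional; everything else is soft.
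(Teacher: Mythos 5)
Your proposal is correct and follows essentially the same route as the paper: existence of the metric projection via the vanishing-at-infinity property, reduction to $f_\star$ by the symmetries and homogeneity, orthogonality of the remainder to the tangent space, second-order Taylor expansion using $\psi(f_\star)=\psi'(f_\star)=0$ and the coercivity hypothesis, and finally the observation that $\dist(f,M)<\delta\norm f$ together with $\langle f_\bot|f_\star\rangle=0$ forces $\norm{f_\bot}/\abs z$ to be small. The only difference is that you sketch the uniform control of the Taylor remainder explicitly, whereas the paper subsumes it in the stated twice real-differentiability of $\psi$.
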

	\begin{remark}\label{rem:RealLinear}
		If $S$ is a real-linear operator, as will be the case in Section~\ref{sect:wave}, the theorem still holds true, with the same proof, provided that, for all $\theta\in\mathbb R$, it holds that $\psi'(e^{i\theta}f_\star)=0$ and that~\eqref{eq:positive_tangent} is replaced by
		\begin{equation}\label{eq:real_positive_tangent}
		\begin{array}{cc}
			\psi''(e^{i\theta} f_\star)(f_\bot, f_\bot)\ge \rho \norm{f_\bot}^2,& \forall f_\bot \in (T_{e^{i\theta}f_\star} M)^\bot.
		\end{array}
	\end{equation}
	\end{remark}
\begin{proof}
	By the vanishing property~\eqref{eq:vanish_infinity}, every $f\in \mathcal H$ has a metric projection on $M\cup \{0\}$; that is, there is at least one $P(f)\in M$, or $P(f)=0$, such that 
	\begin{equation}\label{eq:f_bot_attains}
		\norm{f-P(f)}=\dist(f, M).
	\end{equation} 
	If $\dist(f, M)<\norm{f}$, then $P(f)\ne 0$, thus $P(f)\in M$ and there is the orthogonality $$f-P(f)\in (T_{P(f)} M)^\bot.$$ The proof is standard and can be found, for example, in~\cite{Negro18}.
	
	We let $f_\bot:=f-P(f)$, so that $\norm{f_\bot}=\dist(f, M)$. By the invariances~\eqref{eq:G_invariances}, there is no loss of generality in assuming that $P(f)=z f_\star$ for a $z\in \mathbb C\setminus\{0\}$. Thus, since $\psi(f_\star)=0$ and $\psi'(f_\star)=0$, we can Taylor expand and use~\eqref{eq:positive_tangent} to obtain
	\begin{equation}\label{eq:psi_f}
		\begin{split}
			\psi(f) = \psi(zf_\star + f_\bot) =\abs{z}^2 \psi\left( f_\star + \frac{f_\bot}{z}\right)
			&= \abs{z}^2\left(\frac12 \psi''(f_\star)\left(\frac{f_\bot}{z}, \frac{f_\bot}{z}\right) + o\left( \frac{\lVert f_\bot\rVert^2}{\abs{z}^2}\right)\right) \\ 
			&\ge \abs z^2\left(\frac12 \rho \frac{\norm{f_\bot}^2}{\abs z^2} +o\left( \frac{\lVert f_\bot\rVert^2}{\abs{z}^2}\right)\right) \\ 
			&\ge \frac{\rho}{3}\norm{f_\bot}^2,
		\end{split}
	\end{equation}
	where the last inequality holds provided that $\frac{\lVert f_\bot\rVert^2}{\abs{z}^2}$ is sufficiently small. 
	
	We claim that this smallness is provided by the condition~\eqref{eq:dist_condition_general}. Indeed, $f_\bot\in (T_{f_\star}M)^\bot$, so in particular $\Braket{f_\bot | f_\star}=0$ and so $\norm{f_\perp}^2= \norm{f}^2-\abs{z}^2\norm{f_\star}^2$. Therefore
	\begin{equation}\label{eq:norm_f_ortho}
		\frac{\norm{f_\bot}^2}{\abs{z}^2} = \norm{f_\star}^2\frac{ (\norm{f_\bot}/\norm{f})^2}{1-(\norm{f_\bot}/\norm{f})^2}.
	\end{equation}
	Now, since $x \in [0,1) \mapsto \frac{x^2}{1-x^2}$ is increasing, the left-hand side of~\eqref{eq:norm_f_ortho} is small if and only if $\frac{\norm{f_\bot}}{\norm f}$ is, and this is the case, provided that~\eqref{eq:dist_condition_general} is satisfied with a sufficiently small $\delta\in (0,1)$. This concludes the proof.
	\end{proof}

The condition~\eqref{eq:dist_condition_general} makes the previous result a local one. In some cases, we will be able to upgrade it to a global one, at the cost of some more assumptions.
\begin{definition}\label{def:ProfDec}
    We say that maximizing sequences are pre-compact up to symmetries if, for each sequence $f_n\in \mathcal H$ such that $\norm{f_n}=1$ for all $n$, and such that $\norm{Sf_n}_{L^p}\to C_\star$, there are $f, r_n\in \mathcal H$ such that, up to passing to a subsequence,
    \begin{equation}\label{eq:profile}
    	\begin{array}{cc}
        f_n=\Gamma_{\boldsymbol{\theta}_n} f + r_n, & \text{ and }\norm{r_n}\to 0.
        \end{array}
    \end{equation}
\end{definition} 

\begin{definition}\label{def:UniqMaximiz}
    We say that $f_\star$ is a {unique maximizer up to symmetries} if 
    \begin{equation}\label{eq:UniqMaximizOne}
         \frac{\norm{Sf_\star}_{L^p}}{\norm{f_\star}}=\sup\left\{ \frac{\lVert Sf\rVert_{L^p}}{\norm{f}}\ :\ f\ne 0\right\},
    \end{equation}
    and, moreover, for any $g_\star\in \mathcal H$ such that $$\frac{\norm{Sg_\star}_{L^p}}{\norm{g_\star}}=\frac{\norm{Sf_\star}_{L^p}}{\norm{f_\star}},$$ 
    it holds that
    \begin{equation}\label{eq:UniqMaximizTwo}
    	\begin{array}{cc}
        g_\star=z \Gamma f_\star,& \text{for some } \Gamma\in \mathcal G\text{ and } z\in \mathbb C\setminus\{0\}.
        \end{array}
    \end{equation}
    
\end{definition}
    \begin{theorem}[Sharpened inequalities]\label{thm:general_sharpened}
        Assume that the operator $S$ admits pre-compact maximizing sequences and a unique maximizer $f_\star$, up to symmetries. If there is $\rho>0$ such that 
         \begin{equation}\label{eq:positive_tangent_again}
		\begin{array}{cc}
			\psi''(f_\star)(f_\bot, f_\bot)\ge \rho \norm{f_\bot}^2,& \forall f_\bot \in (T_{f_\star} M)^\bot,
		\end{array}
		\end{equation}
		then there is a $C>0$ such that 
		\begin{equation}\label{eq:SharpenedStrichartz}
		    \begin{array}{cc}
		    \psi(f)\ge C\dist(f, M)^2, & \text{for all }f\in\mathcal H.
		    \end{array}
		\end{equation}
		
	\end{theorem}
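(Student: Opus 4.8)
\textbf{Proof proposal for Theorem~\ref{thm:general_sharpened}.}
The plan is to run the Bianchi--Egnell concentration-compactness scheme, using Theorem~\ref{thm:general} to handle the near-$M$ regime and the precompactness of maximizing sequences to handle everything else. First I would argue by contradiction: suppose there is no such $C>0$. Then there is a sequence $f_n\in\mathcal H$, which (by the scaling invariance $f\mapsto zf$ built into $M$ and $\psi$) we may normalize so that $\dist(f_n, M)=1$ for all $n$, and such that $\psi(f_n)\to 0$. Since $\psi(f_n)=C_\star^2\norm{f_n}^2-\norm{Sf_n}_{L^p}^2\ge 0$ and this tends to $0$, and since $\dist(f_n,M)=1$ forces $\norm{f_n}$ to be bounded below (an element at distance $1$ from $M$, which contains $0$, has norm at least $1$), we see $\norm{f_n}$ is bounded: indeed $\norm{Sf_n}_{L^p}\le C_\star\norm{f_n}$ always, so $\psi(f_n)\ge 0$ gives no upper bound directly, but $\dist(f_n,M)=1$ together with $0\in \ov M$ gives $\norm{f_n}\ge 1$; for the upper bound, pick any fixed $g_0\in M$, then $\norm{f_n}\le \norm{g_0}+\dist(f_n,M)+\text{(diam considerations)}$ — more cleanly, replace $f_n$ by $f_n/\norm{f_n}$ is \emph{not} allowed since that changes the distance, so instead I normalize $\norm{f_n}=1$ and assume $\dist(f_n,M)\ge c_0>0$ is bounded below while $\psi(f_n)\to 0$; this is the correct normalization and is what I will actually use.

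So, renormalizing: assume $\norm{f_n}=1$, $\psi(f_n)\to 0$, and $\dist(f_n,M)\ge c_0>0$ for some fixed $c_0$; the goal is a contradiction. From $\psi(f_n)\to 0$ we get $\norm{Sf_n}_{L^p}\to C_\star$, so $(f_n)$ is a maximizing sequence. By the precompactness-up-to-symmetries hypothesis and Definition~\ref{def:ProfDec}, after passing to a subsequence there are $\boldsymbol\theta_n\in\mathbb R^k$ and $f\in\mathcal H$ with $f_n=\Gamma_{\boldsymbol\theta_n}f+r_n$, $\norm{r_n}\to 0$. Then $\norm{f}=\lim\norm{\Gamma_{\boldsymbol\theta_n}f}=\lim\norm{f_n-r_n}=1$, and by continuity of $S$ on $\mathcal H$ together with the invariance $\norm{S\Gamma_{\boldsymbol\theta_n}f}_{L^p}=\norm{Sf}_{L^p}$, we get $\norm{Sf}_{L^p}=\lim\norm{Sf_n}_{L^p}=C_\star$ — so $f$ is an actual maximizer with $\norm{f}=1$. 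By the uniqueness-up-to-symmetries hypothesis (Definition~\ref{def:UniqMaximiz}), $f=z\Gamma f_\star$ for some $\Gamma\in\mathcal G$ and $z\in\mathbb C\setminus\{0\}$; in particular $f\in M$, hence $\Gamma_{\boldsymbol\theta_n}f\in M$ by~\eqref{eq:G_invariances}/the group structure, and therefore $\dist(f_n,M)\le\norm{f_n-\Gamma_{\boldsymbol\theta_n}f}=\norm{r_n}\to 0$. This contradicts $\dist(f_n,M)\ge c_0>0$, so the contradiction hypothesis fails on the region where $\dist(f,M)$ is bounded away from $0$ relative to $\norm{f}$.

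It remains to patch the two regimes into a single inequality valid for all $f\in\mathcal H$. By Theorem~\ref{thm:general}, whose hypotheses are exactly~\eqref{eq:positive_tangent_again} plus $\psi'(f_\star)=0$ — the latter being part of the standing setup established in the relevant section, and which I will invoke — there are $\delta\in(0,1)$ and $\rho>0$ with $\psi(f)\ge\tfrac{\rho}{3}\dist(f,M)^2$ whenever $\dist(f,M)<\delta\norm{f}$. For the complementary region $\dist(f,M)\ge\delta\norm{f}$: by homogeneity it suffices to treat $\norm{f}=1$, where the set $\{\psi(f):\norm{f}=1,\ \dist(f,M)\ge\delta\}$ has infimum $m>0$ by the contradiction argument of the previous paragraph (with $c_0=\delta$); since $\dist(f,M)\le\norm{f}+\dist(0,M)$-type bounds give $\dist(f,M)\le$ some fixed $D$ on $\norm{f}=1$ (indeed $\dist(f,M)\le\norm{f-0}=1$ as $0\in\ov M$), we conclude $\psi(f)\ge m\ge (m/D^2)\dist(f,M)^2$ there, and then rescale. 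Taking $C:=\min\{\rho/3,\ m\}$ (absorbing the $D^2$ as above) yields~\eqref{eq:SharpenedStrichartz} for all $f$. The main obstacle is the bookkeeping in the compactness step: one must verify that the profile $f$ extracted from Definition~\ref{def:ProfDec} is genuinely a maximizer (not merely a weak limit with a defect), which relies on $\norm{r_n}\to 0$ in the \emph{strong} $\mathcal H$-norm together with the exact symmetry-invariance of both $\norm{\cdot}$ and $\norm{S\cdot}_{L^p}$; any failure of strong convergence, or of the operator $S$ to be continuous, would break the argument, so those are the points to check carefully in each concrete application.
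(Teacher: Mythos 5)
Your proposal is correct and follows essentially the same Bianchi--Egnell scheme as the paper: normalize to $\norm{f_n}=1$, use pre-compactness and uniqueness of the maximizer to force $\dist(f_n,M)\to 0$ along any sequence with $\psi(f_n)\to 0$, and invoke Theorem~\ref{thm:general} in the near-$M$ regime. The only (cosmetic) difference is that you organize it as a two-regime patching argument while the paper runs a single contradiction sequence for $\psi(f_n)/\dist(f_n,M)^2\to 0$; the ingredients and their roles are identical.
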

\begin{proof}
    Assume, aiming for a contradiction, that there is a sequence $f_n\in\mathcal H$ such that 
    \begin{equation}\label{eq:distance_to_zero}
        \frac{\psi(f_n)}{\dist(f_n, M)^2}\to 0.
    \end{equation}
    By homogeneity, we can assume that $\norm{f_n}=1$. Since $\dist(f_n, M)\le \norm{f_n}$, the denominator in~\eqref{eq:distance_to_zero} is bounded; thus $\psi(f_n)\to 0$, which implies that $f_n$ is a maximizing sequence in the sense of Definition~\ref{def:ProfDec}, so 
    \begin{equation}\label{eq:SpecialProfDecomp}
    	\begin{array}{cc}
        f_n=\Gamma_{\boldsymbol{\theta}_n} f + r_n, & \text{ where }\norm{r_n}\to 0.
        \end{array}
    \end{equation}
This immediately implies that $f$ is a maximizer for $S$; so by Definition~\ref{def:UniqMaximiz}, $f=z\Gamma_{\boldsymbol{\theta}}f_\star$ for some $z\in \mathbb C$ and $\boldsymbol{\theta}\in\mathbb R^k$. Thus,~\eqref{eq:SpecialProfDecomp} is equivalent to 
    \begin{equation}\label{eq:StabilityOrbital}
        \dist(f_n, M)\to 0.
    \end{equation}
    
    Now, since $\norm{f_n}=1$, for all sufficiently large $n$ it holds that $\dist(f_n, M)<\delta\norm{f_n}$, where $\delta>0$ is the parameter that appears in Theorem~\ref{thm:general}. But then $$\psi(f_n)\ge \frac{\rho}{3}\dist(f_n, M)^2,$$ for a $\rho>0$, contradicting~\eqref{eq:distance_to_zero}. The proof is complete. 
\end{proof}


\section{Cone adjoint restriction - Wave equation}\label{sect:wave}

We now want to apply the methods of the previous section to the Hilbert space \\ $\mathcal H=\dot{H}^{1/2}\times \dot{H}^{-1/2}(\mathbb R^d)$ and to the operator 
$$
\begin{array}{cc}
	S\colon \mathcal H\to L^p(\mathbb R^{1+d}), & \text{ where } p=2\frac{d+1}{d-1}, 
\end{array}
$$
 given, for $t\in\mathbb R$ and $x\in\mathbb R^d$, by 
\begin{equation}\label{eq:S_oper_wave}
	\begin{array}{ccc}
	S\boldsymbol f(t, x):=\cos(t\sqrtDelta)f_0 (x)+ \frac{\sin(t\sqrtDelta)}{\sqrtDelta} f_1(x), &\text{ where } \boldsymbol f=(f_0, f_1).
	\end{array}
\end{equation}
Before we start the study of the deficit functional, we remark that, if $\fbold$ is real-valued, then $S\fbold$ also is real-valued. Thus, there will be no loss of generality in limiting ourselves to the real-valued case, as the following proposition shows. 
\begin{proposition}\label{prop:real_wave}
	 Suppose that $\fbold, \gbold$ are real-valued and $\fbold+i\gbold\neq \obold$.
	Then either $\fbold=\obold$, or $\gbold=\obold$, or
	\begin{equation}\label{eq:all_maximizers}
	\frac{\norm{S(\fbold+i\gbold)}_{L^p}}{\norm{\fbold+i\gbold}_\Hcal} \leq	\max\left\{\frac{\norm{S\fbold}_{L^p}}{\norm{\fbold}_{\Hcal}},\frac{\norm{S\gbold}_{L^p}}{\norm{\gbold}_{\Hcal}}\right\}.
	\end{equation}
\end{proposition}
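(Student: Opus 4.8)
The plan is to reduce~\eqref{eq:all_maximizers} to the triangle (Minkowski) inequality in $L^{p/2}$, which is where the hypothesis $p=2\tfrac{d+1}{d-1}\ge 2$ enters.

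First I would record two orthogonality-type identities, both consequences of the fact that $S\fbold$ and $S\gbold$ are real-valued. By linearity of the propagators, $S(\fbold+i\gbold)=S\fbold+iS\gbold$, so, $S\fbold$ and $S\gbold$ being real,
\begin{equation*}
\abs{S(\fbold+i\gbold)}^2=\abs{S\fbold}^2+\abs{S\gbold}^2 \quad\text{pointwise on }\Real^{1+d}.
\end{equation*}
Second, I claim the Pythagorean identity $\norm{\fbold+i\gbold}_{\Hcal}^2=\norm{\fbold}_{\Hcal}^2+\norm{\gbold}_{\Hcal}^2$. Writing $\fbold=(f_0,f_1)$ and $\gbold=(g_0,g_1)$ with all four components real and expanding $\norm{f_0+ig_0}_{\Hdot^{1/2}}^2$ and $\norm{f_1+ig_1}_{\Hdot^{-1/2}}^2$, the cross terms carry the factors $\int f_0\,\sqrtDelta g_0\,dx - \int g_0\,\sqrtDelta f_0\,dx$ and $\int f_1\,\tfrac{1}{\sqrtDelta} g_1\,dx - \int g_1\,\tfrac{1}{\sqrtDelta} f_1\,dx$ respectively; these vanish because $\sqrtDelta$ and $\tfrac{1}{\sqrtDelta}$ are self-adjoint, and the surviving pairings are real because $\sqrtDelta$ and $\tfrac{1}{\sqrtDelta}$ have the even real Fourier symbols $\abs{\xi}$ and $\abs{\xi}^{-1}$ (hence map real-valued functions to real-valued functions).

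Next I would dispose of the degenerate cases: if $\fbold=\obold$ or $\gbold=\obold$ there is nothing to prove, so assume $\fbold\neq\obold\neq\gbold$, hence $\norm{\fbold}_{\Hcal},\norm{\gbold}_{\Hcal}>0$. Since $p/2\ge 1$, Minkowski's inequality in $L^{p/2}$ applies to the nonnegative functions $\abs{S\fbold}^2,\abs{S\gbold}^2\in L^{p/2}$, and combined with the pointwise identity above it yields
\begin{multline*}
\norm{S(\fbold+i\gbold)}_{L^p}^2=\norm{\,\abs{S\fbold}^2+\abs{S\gbold}^2\,}_{L^{p/2}}\\
\le\norm{\,\abs{S\fbold}^2\,}_{L^{p/2}}+\norm{\,\abs{S\gbold}^2\,}_{L^{p/2}}=\norm{S\fbold}_{L^p}^2+\norm{S\gbold}_{L^p}^2.
\end{multline*}

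To conclude, set $M:=\max\bigl\{\norm{S\fbold}_{L^p}^2/\norm{\fbold}_{\Hcal}^2,\ \norm{S\gbold}_{L^p}^2/\norm{\gbold}_{\Hcal}^2\bigr\}$, so that $\norm{S\fbold}_{L^p}^2\le M\norm{\fbold}_{\Hcal}^2$ and $\norm{S\gbold}_{L^p}^2\le M\norm{\gbold}_{\Hcal}^2$; summing these and invoking the two identities of the first step,
\begin{equation*}
\norm{S(\fbold+i\gbold)}_{L^p}^2\le M\bigl(\norm{\fbold}_{\Hcal}^2+\norm{\gbold}_{\Hcal}^2\bigr)=M\,\norm{\fbold+i\gbold}_{\Hcal}^2 ,
\end{equation*}
and dividing by $\norm{\fbold+i\gbold}_{\Hcal}^2>0$ and taking square roots gives exactly~\eqref{eq:all_maximizers}. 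I do not expect a substantial obstacle here; the one point that genuinely uses the reality of $\fbold,\gbold$ is that $\abs{S(\fbold+i\gbold)}^2$ decouples as $\abs{S\fbold}^2+\abs{S\gbold}^2$ (together with the parallel cancellation of the cross terms in the $\Hcal$-norm), so this reduction is tied to a propagator that preserves real-valuedness — unlike the half-wave propagator $e^{it\sqrtDelta}$, for which $\abs{S\fbold+iS\gbold}^2$ does not split.
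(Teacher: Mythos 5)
Your proposal is correct and follows essentially the same route as the paper's proof: the pointwise decoupling $\abs{S(\fbold+i\gbold)}^2=\abs{S\fbold}^2+\abs{S\gbold}^2$, the Pythagorean identity for the $\Hcal$-norm, and Minkowski's inequality in $L^{p/2}$. The only cosmetic difference is the final step, where you sum the two bounds against the maximum $M$ while the paper orders the two ratios and runs a mediant computation; these are the same argument.
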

\begin{proof}
	We assume  that $\fbold\ne\obold$ and $\gbold\ne \obold$, and moreover 
	\begin{equation}\label{eq:fbold_strict}
		\frac{\norm{ S\fbold}_{L^p}^2}{\norm{\fbold}_{\Hcal}^2} \geq  \frac{\norm{ S\gbold}_{L^p}^2}{\norm{\gbold}_{\Hcal}^2}.
	\end{equation}
	Using that $\norm{\fbold+i\gbold}_\Hcal^2=\norm{\fbold}_\Hcal^2+\norm{\gbold}_\Hcal^2$ and 
	\begin{equation}\label{eq:Lp_triangle}
		\norm{S(\fbold+i\gbold)}_{L^p}^2=\norm{\abs{S\fbold}^2+\abs{S\gbold}^2}_{L^{p/2}}\le \norm{S\fbold}_{L^p}^2+\norm{S\gbold}_{L^p}^2, 
	\end{equation}
	we  infer from \eqref{eq:fbold_strict}
	\begin{equation}\label{eq:final_realization}
		\frac{\norm{S(\fbold+i\gbold)}^2_{L^p}}{\norm{\fbold+i\gbold}_\Hcal^2}\le \frac{\norm{S\fbold}_{L^p}^2+\norm{S\gbold}_{L^p}^2}{\norm{\fbold}^2_\Hcal+ \norm{\gbold}_\Hcal^2} \leq \frac{ (1+\frac{\norm{\gbold}_\Hcal^2}{\norm{\fbold}_\Hcal^2})\norm{S\fbold}_{L^p}^2}{ (1+\frac{\norm{\gbold}_\Hcal^2}{\norm{\fbold}_\Hcal^2})\norm{\fbold}_\Hcal^2}=\frac{\norm{S\fbold}_{L^p}^2}{\norm{\fbold}_\Hcal^2}.
	\end{equation}
	\end{proof}
As stated in the Introduction, for real-valued $\fbold=(f_0, f_1)$, we let 
$f:=f_0-\frac{i}{\sqrtDelta}f_1$, so 
\begin{equation}\label{eq:real_complex_dictionary}
	\begin{array}{cc}
		S\fbold = \Re(e^{it\sqrtDelta} f), &\text{and } \norm{\fbold}_{\Hdot^{1/2}\times \Hdot^{-1/2}}=\norm{f}_{\Hdot^{1/2}};
	\end{array}
\end{equation}
thus, the deficit functional to study reads
\begin{equation}\label{eq:WaveDeficitComplex}
	\begin{array}{c}
	\displaystyle \psi(f)=\Scal^2\norm{f}_{\Hdot^{1/2}}^2 - \lVert \Re(e^{it\sqrtDelta} f)\rVert_{L^p(\Real^{1+d})}^2.
	\end{array}
\end{equation}
We recall from~\eqref{eq:FoschiConstantConj} that 
\begin{equation}\label{eq:RecallFstar}
	\begin{array}{cc}
		f_\star(x)=c(1+\abs{x}^2)^{-\frac{d-1}{2}}, & \Scal=\lVert \Re(e^{it\sqrtDelta}f_\star)\rVert_{L^p(\mathbb{R}^{1+d})},
	\end{array}
\end{equation}
where $c>0$ is such that $\norm{f_\star}_{\dot{H}^{1/2}}=1$. 

We now describe the symmetry group. As in Foschi~\cite{Fo}, we let $\Gcal$ denote the Lie group generated by the following transformations of $\Hdot^{1/2}(\Real^d)$;
\begin{equation}\label{eq:LieGroup}
	\begin{array}{cc}
		f\mapsto \lambda^\frac{d-1}{2}e^{it_0\sqrtDelta}f(\lambda Rx+h), & f\mapsto (e^{i(\cdot)\sqrtDelta} f)\circ L^\alpha|_{t=0}, 
	\end{array}
\end{equation}
where $L^\alpha(\tau, \xi)=(\gamma \tau - \gamma \alpha \xi_1, \gamma \xi_1 - \gamma \alpha \tau, \xi_2,\ldots, \xi_d)$, for $\gamma:=(1-\alpha^2)^{-1/2}$, denotes the Lorentzian boost. The parameters in~\eqref{eq:LieGroup} are
\begin{equation}\label{eq:parameters}
	\begin{array}{ccccc}
		t_0\in\Real, &h\in\Real^d, &R\in SO(d),&\lambda >0, &\abs{\alpha}<1.
	\end{array}
\end{equation}
The manifold of conjectured maximizers is, thus,
\begin{equation}\label{eq:ComplexM}
	\MM=\{ r\Gamma(e^{i\theta}f_\star)\ :\ r\ge 0, \theta \in \Real, \Gamma\in\Gcal\}.
\end{equation} 
Differentiating $r\Gamma(e^{i\theta}f_\star)$ with respect to $r, \theta$ and the parameters in~\eqref{eq:parameters}, we compute that the tangent space at $e^{i\theta}f_\star$ is 
\begin{equation}\label{eq:tangent_space}
	T_{e^{i\theta}f_\star} \MM=\mathrm{span}_{\r}\left( f_\star, if_\star, i\sqrtDelta e^{i\theta}f_\star, \partial_{x_j} e^{i\theta}f_\star, \left(\nud+x\cdot\nabla\right)e^{i\theta}f_\star, ix_j\sqrtDelta e^{i\theta}f_\star\right).
\end{equation}

The group $\Gcal$ satisfies the invariances
\begin{equation}\label{eq:general_invariance}
	\begin{array}{ccc}
		\norm{\Re(e^{i(\cdot)\sqrtDelta}\Gamma f)}_{L^p}=\norm{\Re(e^{i(\cdot)\sqrtDelta }f}_{L^p}, &\norm{\Gamma f}_{\Hdot^{1/2}}=\norm{f}_{\Hdot^{1/2}}, &\forall f\in\Hdot^{1/2},\ \forall \Gamma\in\Gcal, 
	\end{array}
\end{equation}
which are needed to apply the method of Section~\ref{sec:method}. We remark that, on the other hand, the property
\begin{equation}\label{eq:theta_invariance}
	\norm{\Re(e^{i t\sqrtDelta} e^{i\theta} f)}_{L^p(\Real^{1+d})}=\norm{\Re(e^{it\sqrtDelta} f)}_{L^p(\Real^{1+d})}
\end{equation}
which is true for $p=4$ (see~\cite{BezRogers}), does not seem to hold for any other value of $p$. Thus, the deficit functional $\psi$ is generally not invariant under the transformation $f\mapsto e^{i\theta} f$. However, we will prove in the forthcoming~\eqref{eq:theta_independence_fstar} that the invariance~\eqref{eq:theta_invariance} does hold for $f=f_\star$; in particular, $f_\star$ is an extremizer if and only if $e^{i\theta}f_\star$ is. Because of this, we have to include this transformation in the definition~\eqref{eq:ComplexM}.

As pointed out in Section~\ref{sec:method}, since $\mathcal G$ is not a compact group, we need the {vanishing at infinity} property~\eqref{eq:vanish_infinity}, which in the present case prescribes that, for each $f, g\in\Hdot^{1/2}$, 
\begin{equation}\label{eq:vanish_infinity_Gcal}
    \begin{array}{ccc}
     \langle \Gamma f| g\rangle_{\Hdot^{1/2}} \to 0 &\text{ if}&\abs{t_0}+\abs{h}+\abs{\log \lambda} + \abs{\operatorname{arctanh}(\alpha)} \to \infty.
     \end{array}
\end{equation}
This property holds true; see~\cite[Lemmas~3.2 and~4.1]{Ramos}. For $d=3$, the propagator $\Re(e^{it\sqrtDelta})$ admits pre-compact maximizing sequences (see Definition~\ref{def:ProfDec}), as proven in the aforementioned~\cite{Ramos}, and a unique maximizer (see Definition~\ref{def:UniqMaximiz}), as proven in~\cite{Fo}. Consequently, Theorem~\ref{thm:main_wave} holds in the stronger, global form. See~\cite{Negro18} for more details. Finally, we record a remark that won't be needed in the sequel but that can be interesting elsewhere.
\begin{remark}\label{rem:Fourier_Maximizers}
	Letting $\Fcal$ denote the Fourier transform, we have 
	\begin{equation}\label{eq:FcalM}
		\Fcal(\MM)=\left\{ \frac{e^{-A\abs\xi + b\cdot \xi + c}}{\abs\xi}\ :\ \Re(A)>0,\, b\in\mathbb{C}^n,\, c\in\mathbb{C}\right\}.
	\end{equation}
\end{remark}

\subsection{The Penrose transform}\label{app:penrose}
Recall that
\begin{equation}\label{eq:p_and_nu_cone}
	\begin{array}{ccc}\displaystyle
		p=2\frac{d+1}{d-1}& \text{ and } & \displaystyle \nud=\frac{d-1}{2}.
	\end{array}
\end{equation} 
We consider the sphere $\SSS^d$, with Lebesgue measure $dS$, and we use $X=(X_0, X_1, \ldots, X_d)$ to denote its points; so, $$X_0^2+X_1^2+\ldots+X_d^2=1.$$
We use the following measures on $\TTT:=\Real/(2\pi\Z)$ and on $\SSS^d$ respectively;
\begin{equation}\label{eq:norm_meas}
	\begin{array}{ccc}
		\displaystyle \dTp:=\frac{dT}{C_d}, 
		&\displaystyle \dSu:=\frac{dS}{\abs{\SSS^d}},
		& \text{where }C_d:=\displaystyle \int_{\TTT}\abs{\cos(\nud T)}^p\, dT.
	\end{array}
\end{equation}
We define a Sobolev norm on the sphere $\SSS^d$ by 
\begin{equation}\label{eq:SphereSobolev}
	\norm{F}_{H^{1/2}(\SSS^d)}^2:=\frac{1}{\nud}\int_{\SSS^d} \sqrt{\nud^2-\DeltaS}(F)\overline{F}\, \dSu.
\end{equation}
With these normalizations, we have for the constant function $1$ that  \begin{equation}\label{eq:normalization}
    \norm{1}_{H^{1/2}(\SSS^d)}=\lVert \Re(e^{iT\sqrt{\nud^2-\Delta_{\SSS^d}}}1)\rVert_{L^p(\TTT\times \SSS^d, \underline{dT}\,\underline{dS})}=1. 
\end{equation}
The following is the main theorem of this subsection.  
\begin{theorem}\label{thm:main_penrose} 
 	There is a surjective $\mathbb C$-linear isometry $$\Ical\colon H^{1/2}(\SSS^d)\to \Hdot^{1/2}(\Real^d)$$ such that
	\begin{enumerate}
		\item[(i)] $\displaystyle \Ical(1)=  f_\star$. 
		\item[(ii)] if $d$ is odd, the deficit functional~\eqref{eq:WaveDeficitComplex} satisfies  $$\displaystyle (\psi\circ \Ical) (F) =\Scal^2\left(\norm{F}_{H^{1/2}(\SSS^d)}^2- \lVert \Re(e^{iT\sqrt{\nud^2-\DeltaS}}F)\rVert_{L^p(\TTT\times \SSS^d, \dTp\,\dSu)}^2\right).$$ 
		\item[(iii)] for each $\theta\in\mathbb R$, the tangent space to $\MM$ satisfies $$\displaystyle\Ical^{-1}(T_{e^{i\theta}f_\star}\MM) =
		\left\{ b+ a_0X_0+\ldots+a_d X_d\ |\ b, a_1, \ldots, a_d\in \mathbb C\right\}.$$		
	\end{enumerate}
\end{theorem}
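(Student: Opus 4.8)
The plan is to construct $\Ical$ from the Penrose conformal compactification of Minkowski spacetime and then verify (i)--(iii), the one conceptual input being the conformal invariance of the free wave equation. Writing $x=r\omega$ with $\omega\in\SSS^{d-1}$, $r\ge0$, and $u_\pm:=t\pm r$, the Penrose map $\tan(p/2)=u_+$, $\tan(q/2)=u_-$ (so $p,q\in(-\pi,\pi)$), $T:=(p+q)/2$, $\chi:=(p-q)/2$, $X:=(\cos\chi,\sin\chi\,\omega)$ is a conformal diffeomorphism $\Phi\colon\Real^{1+d}\to D$ onto the causal diamond $D=\{(T,X)\in\TTT\times\SSS^d:\abs T<\pi-\chi(X)\}$, where $\chi(X)=\dist_{\SSS^d}(X,(1,0,\dots,0))$. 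I would record: the conformal factor $\Omega=2\cos(p/2)\cos(q/2)$, whence $dT\,dS=\Omega^{d+1}\,dt\,dx$ and the conformal covariance $(\partial_t^2-\Delta)(\Omega^{\nud}\,w\circ\Phi)=\Omega^{\nud+2}\big((\partial_T^2-\DeltaS+\nud^2)w\big)\circ\Phi$; and on $\{t=0\}$ the values $T=0$, $\Omega(0,x)=\tfrac{2}{1+\abs x^2}=:\Omega_0(x)$, $X(0,x)=\big(\tfrac{1-\abs x^2}{1+\abs x^2},\tfrac{2x}{1+\abs x^2}\big)$, together with the key facts $\partial_t\Omega|_{t=0}=0$ and $\partial_tT|_{t=0}=\Omega_0$. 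I then \emph{define} $\Ical(F):=\big(\Omega^{\nud}\,v\circ\Phi\big)|_{t=0}$ with $v(T,\cdot):=e^{iT\sqrt{\nud^2-\DeltaS}}F$; since $T(0,\cdot)=0$ this is the static formula $\Ical(F)(x)=\Omega_0(x)^{\nud}F\big(\tfrac{1-\abs x^2}{1+\abs x^2},\tfrac{2x}{1+\abs x^2}\big)$, i.e. $\Ical$ is the weight-$\nud$ stereographic intertwiner, manifestly $\cp$-linear, with inverse $\Ical^{-1}(h)(X)=(1+X_0)^{-\nud}h(x(X))$, $x(X):=\tfrac{(X_1,\dots,X_d)}{1+X_0}$.

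The isometry $\norm{\Ical(F)}_{\Hdot^{1/2}(\Real^d)}=\norm{F}_{H^{1/2}(\SSS^d)}$ (which also yields well-definedness and surjectivity of $\Ical$) is exactly the conformal covariance of $(-\Delta)^{1/2}$ under stereographic projection: $\sqrt{\nud^2-\DeltaS}$ is the order-$1$ intertwining operator on $\SSS^d$ — eigenvalue $\ell+\nud$ on degree-$\ell$ spherical harmonics — and it pulls back under the weight-$\nud$ intertwiner to a multiple of $(-\Delta)^{1/2}$; I would cite this classical fact (or reprove it from the eigenvalue computation) and fix the normalizing constants, namely the $\tfrac1\nud$ in~\eqref{eq:SphereSobolev} and the normalized measures, by evaluating at $F\equiv1$ via~\eqref{eq:normalization}. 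Part (i) then follows: $\norm{\Ical(1)}_{\Hdot^{1/2}}=\norm1_{H^{1/2}(\SSS^d)}=1$ and $\Ical(1)(x)=2^{\nud}(1+\abs x^2)^{-(d-1)/2}$, which is exactly $f_\star$ (so $c=2^{\nud}$).

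For (ii), put $u:=\Omega^{\nud}\,v\circ\Phi$ with $v=e^{iT\sqrt{\nud^2-\DeltaS}}F$. Conformal covariance makes $u$ a free wave, and since $v$ is positive-frequency and $\Phi$ preserves the time-orientation, $u$ is positive-frequency, i.e. $u=e^{it\sqrtDelta}\Ical(F)$; this intertwining of the half-wave propagators by the Penrose transform is the heart of the matter, and I expect pinning it down rigorously — the conformal inversion being the delicate generator — to be the main obstacle (alternatively one quotes the classical conformal invariance of the wave equation). Granting it, $\Re(e^{it\sqrtDelta}\Ical(F))=\Omega^{\nud}(\Re v)\circ\Phi$, so changing variables through $\Phi$ with $dt\,dx=\Omega^{-(d+1)}\,dT\,dS$ and $p\nud=d+1$ gives $\int_{\Real^{1+d}}\abs{\Re(e^{it\sqrtDelta}\Ical(F))}^p\,dt\,dx=\int_D\abs{\Re v}^p\,dT\,dS$. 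The isometry $\iota(T,X):=(T+\pi,-X)$ of $\TTT\times\SSS^d$ maps $D$ onto its complement (for each $X$ with $\chi(X)\in(0,\pi)$ the two $T$-arcs are complementary, of total length $2\pi$); here the hypothesis $d$ odd enters, since then $F_\ell(-X)=(-1)^\ell F_\ell(X)$ and $\sqrt{\nud^2-\DeltaS}$ has integer eigenvalue $\ell+\nud$, giving $v\circ\iota=(-1)^{\nud}v$, hence $\abs{\Re(v\circ\iota)}=\abs{\Re v}$, so $\int_D\abs{\Re v}^p\,dT\,dS=\tfrac12\int_{\TTT\times\SSS^d}\abs{\Re v}^p\,dT\,dS=\tfrac{C_d\abs{\SSS^d}}2\norm{\Re v}_{L^p(\dTp\,\dSu)}^p$. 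Evaluating at $F\equiv1$ (where $\Re v=\cos(\nud T)$ has unit $L^p(\dTp\,\dSu)$-norm) identifies $(C_d\abs{\SSS^d}/2)^{1/p}=\norm{\Re(e^{it\sqrtDelta}f_\star)}_{L^p}=\Scal$, and combining with the isometry yields $(\psi\circ\Ical)(F)=\Scal^2\big(\norm F_{H^{1/2}(\SSS^d)}^2-\norm{\Re(e^{iT\sqrt{\nud^2-\DeltaS}}F)}_{L^p(\dTp\,\dSu)}^2\big)$. For $d$ even $\nud\in\tfrac12+\z$, so $v$ lives only on a double cover of $\TTT\times\SSS^d$ and $v\circ\iota=\pm i\,v$, and the argument genuinely breaks — consistently with the statement.

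Part (iii) is a computation with the static formula. With $\Omega_0(x)=\tfrac2{1+\abs x^2}$ one has $f_\star=\Omega_0^{\nud}$, $\partial_{x_j}\Omega_0=-\Omega_0^2x_j$, hence $\partial_{x_j}f_\star=-\nud\,\Omega_0^{\nud+1}x_j$ and $(\nud+x\cdot\nabla)f_\star=\nud\,\Omega_0^{\nud}(1-\abs x^2\Omega_0)$; and from the intertwining at $t=0$ (i.e. $\partial_tu|_{t=0}=i\sqrtDelta f_\star$ with $\partial_t\Omega|_{t=0}=0$, $\partial_tT|_{t=0}=\Omega_0$), equivalently the classical identity $(-\Delta)^{1/2}(1+\abs x^2)^{-(d-1)/2}=(d-1)(1+\abs x^2)^{-(d+1)/2}$, one gets $\sqrtDelta f_\star=\nud\,\Omega_0^{\nud+1}$. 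Using $\Ical^{-1}(h)(X)=(1+X_0)^{-\nud}h(x(X))$ and the substitution identities $\Omega_0(x(X))=1+X_0$, $x_j(X)=\tfrac{X_j}{1+X_0}$, $\abs{x(X)}^2\Omega_0(x(X))=1-X_0$, one computes
\begin{gather*}
	\Ical^{-1}(f_\star)=1,\qquad \Ical^{-1}\big((\nud+x\cdot\nabla)f_\star\big)=\nud X_0,\qquad \Ical^{-1}(i\sqrtDelta f_\star)=i\nud(1+X_0),\\
	\Ical^{-1}(\partial_{x_j}f_\star)=-\nud X_j,\qquad \Ical^{-1}(ix_j\sqrtDelta f_\star)=i\nud X_j\quad(j=1,\dots,d),
\end{gather*}
and the same with an extra factor $e^{i\theta}$ when $f_\star$ is replaced by $e^{i\theta}f_\star$. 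Taking the real span over the generators in~\eqref{eq:tangent_space}: $f_\star,if_\star$ give $\cp\cdot1$; then $(\nud+x\cdot\nabla)(e^{i\theta}f_\star)$ and $i\sqrtDelta(e^{i\theta}f_\star)$ give $\cp\cdot X_0$ (the constant part of the latter being absorbed into $\cp\cdot1$); and $\partial_{x_j}(e^{i\theta}f_\star)$ with $ix_j\sqrtDelta(e^{i\theta}f_\star)$ give $\cp\cdot X_j$, $j=1,\dots,d$. Hence $\Ical^{-1}(T_{e^{i\theta}f_\star}\MM)=\{b+a_0X_0+\dots+a_dX_d:b,a_0,\dots,a_d\in\cp\}$, as claimed.
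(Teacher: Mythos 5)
Your proposal is correct and follows essentially the same route as the paper: the same Penrose map and weight-$\nud$ stereographic intertwiner for the isometry (the paper cites Morpurgo's conformal covariance of $\sqrt{-\Delta}$), the same conformal covariance of the d'Alembertian together with Cauchy-data matching at $t=0$ for the propagator identity (the paper makes your ``positive-frequency'' step rigorous using exactly the facts $\partial_t\Omega|_{t=0}=0$ and $\partial_t T|_{t=0}=\Omega_0$ that you record and later use in (iii)), the same antipodal symmetry $(T,X)\mapsto(T+\pi,-X)$ to pass from the causal diamond to the full cylinder when $d$ is odd, and the same explicit computation of the tangent vectors for (iii). The only discrepancy is a harmless normalization: with the paper's definition of $\norm{\cdot}_{H^{1/2}(\SSS^d)}$ the isometric intertwiner carries a prefactor $1/\sqrt{\nud\lvert\SSS^d\rvert}$, so $c=2^{\nud}/\sqrt{\nud\lvert\SSS^d\rvert}$ rather than $2^{\nud}$ --- immaterial, since you fix constants by evaluating at $F\equiv 1$ anyway.
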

We remark that, as a consequence of~(iii), the tangent space $T_{e^{i\theta}f_\star}\MM$ is in fact independent of $\theta\in\mathbb R$. In the proof of Theorem~\ref{thm:main_penrose} we will also compute the exact value of $\Scal=\norm{\cos(t\sqrtDelta)f_\star}_{L^p}$; 
\begin{equation}\label{eq:ExplScal}
	\Scal=\sqrt{\frac{2}{d-1}} \pi^\frac{1}p \lvert\SSS^d\rvert^{\frac1p-\frac12}.
\end{equation}

\subsubsection{Proof of Theorem~\ref{thm:main_penrose}}
We define the {polar coordinates} on $\Real^d$ as  
\begin{equation}\label{eq:polar_coords}
	\begin{array}{cc}
		x=r\omega, & \text{where }r= |x|\ \text{and}\ \omega\in \SSS^{d-1}, 
	\end{array}
\end{equation}
and the polar coordinates on $\SSS^d$ as 
\begin{equation}\label{eq:polar_coords_sphere}
	\begin{array}{cc}
		(X_0, X_1, \ldots, X_d)=(\cos(R), \sin(R)\, \omega), & \text{where }R\in [0, \pi], \ \omega\in \SSS^{d-1}.
	\end{array}
\end{equation}
We consider the {Penrose map} $\Pcal\colon \Real^{1+d}\to (-\pi, \pi)\times \SSS^d$, defined as
\begin{equation}\label{eq:PenroseMap}
	\Pcal(t, r\omega)=(T, \cos(R), \sin(R)\omega), 
\end{equation}
where 
\begin{equation}\label{eq:PenroseEqns}
	\begin{array}{cc}
		T=\arctan(t+r) + \arctan(t-r), &
		R=\arctan(t+r)-\arctan(t-r).
	\end{array}
\end{equation}
We remark that the restriction of $\Pcal$ to $\{0\}\times \mathbb R^d$, which we denote by $\Pcal|_{t=0}$, is a bijective map onto $\{0\}\times \mathbb S^d\setminus \{(-1,0, \ldots,0)\}$, and it coincides with the usual stereographic projection. We denote\begin{equation}\label{eq:Penrose_notations}
	\begin{array}{ccc}
		\Omega:=\cos T+ \cos R,& \Omega_0:= 1+\cos R, &  \nud:=\frac{d-1}{2}.
	\end{array}
\end{equation}
The functions $\Omega$ and $\Omega_0$ are important here, because they are the conformal factors of $\Pcal$ and $\Pcal|_{t=0}$ respectively; by this we mean that 
\begin{equation}\label{eq:ConformalFactors}
	\begin{split}
		dt^2-dr^2-r^2 d\omega^2&=\Omega^{-2}\left( dT^2- dR^2-\sin^2\!R\, d\omega^2\right),  \\ 
		dr^2+r^2d\omega^2&= \Omega_0^{-2}(dR^2+ \sin^2\!R\,d\omega^2), 
	\end{split}
\end{equation}
where the change of coordinates~\eqref{eq:PenroseEqns} is implicit. The left-hand sides are the metric tensors of the Minkowski space $\mathbb R^{1+d}$ and of the Euclidean space $\mathbb R^d$ respectively, while the terms in brackets in the right-hand sides are the metric tensors of $\mathbb R\times \mathbb S^d$ and of $\mathbb S^d$ respectively; thus $\Pcal$ and $\Pcal|_{t=0}$ are conformal maps. Now, 
a straightforward computation shows that
\begin{equation}\label{eq:OmegaZero_physical}
	\Omega_0\circ \Pcal|_{t=0}= \frac{2}{1+\abs{x}^2}, 
\end{equation}
so, in particular, $f_\star= c 2^{ - \nud }\Omega_0^{\nud} \circ \Pcal|_{t=0}$; this proves~(i). We turn to the proof of~(ii). 
\begin{remark} The image of the Penrose map is
\begin{equation}\label{eq:Pcal_range}
	\Pcal(\Real^{1+d})=\left\{ (T, \cos(R), \sin(R)\omega)\ \mid\ R\in[0, \pi),\ \abs{T}<\pi\right\},
\end{equation}
which is an open submanifold of $[-\pi, \pi]\times \SSS^d$; see Figure~\ref{fig:PenroseTriangle}.
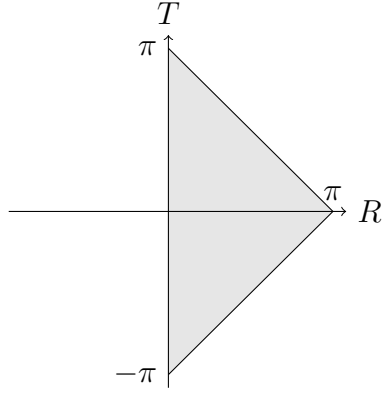
\begin{figure}[!]
\centering
\begin{tikzpicture}[scale=0.7]
\filldraw[fill=gray!20!white] (0, -pi+0.05) 
node[anchor=east] {$-\pi$} 
-- (pi-0.05, 0) 
node[anchor=south]{$\pi$} 
-- (0, pi-0.05) 
node[anchor=east]{$\pi$}
;
\draw[->] (-3, 0) -> (3.34,0) node[anchor=west] {$R$};
\draw[->] (0, -3.34) -> (0, 3.34) node[anchor=south]{$T$};
\end{tikzpicture}
\caption{The image of the Penrose map $\Pcal$, in polar coordinates.}
\label{fig:PenroseTriangle}
\end{figure}
\end{remark}

\begin{lemma}[The Penrose transform]\label{lem:PenroseTrans}
	The linear map $\Ical\colon F\mapsto f$, defined by 
	\begin{equation}\label{eq:PenroseTransf}
		f=\frac{1}{\sqrt{\nud\lvert\SSS^d\rvert}}(\Omega_0^{\nud}\, F )\circ \Pcal|_{t=0}
	\end{equation}
	maps $H^{1/2}(\SSS^d)$ onto $\Hdot^{1/2}(\Real^d)$ isometrically, that is 
	\begin{equation}\label{eq:IcalIsometry}
	    \norm{f}_{\Hdot^\frac12(\mathbb R^d)}=\norm{F}_{H^\frac12(\SSS^d)},\qquad \forall f\in \Hdot^\frac12(\mathbb R^d),
	\end{equation}
	and it satisfies 
	\begin{equation}\label{eq:OneSidedPropPullback}
		e^{it\sqrtDelta} f
		= \frac{1}{\sqrt{\nud\lvert\SSS^d\rvert}}\left(
		\Omega^{\nud}\, 
		e^{iT\sqrt{\nud^2-\DeltaS}} F
		\right)\circ \Pcal.
	\end{equation}
\end{lemma}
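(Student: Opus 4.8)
The plan is to deduce both assertions from conformal covariance. The geometric input is already in place: $\Pcal|_{t=0}$ is the stereographic projection and $\Pcal$ is conformal with factor $\Omega$, by~\eqref{eq:ConformalFactors}. I would combine this with two covariance identities---one for the order-one operator $\sqrtDelta$ on $\Real^d$ under stereographic projection, and one for the spacetime wave operator $\partial_t^2-\Delta$ on $\Real^{1+d}$ under the full Penrose map---and use the first to identify the Cauchy data that appear in the second.

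First I would record the elementary facts obtained by differentiating~\eqref{eq:PenroseEqns} at $t=0$: that $\Omega|_{t=0}=\Omega_0$ and, crucially, $\partial_t\Omega|_{t=0}=0$; that $\partial_tT|_{t=0}=\Omega_0\circ\Pcal|_{t=0}=\tfrac{2}{1+|x|^2}$ while the spatial components $\partial_t(\cos R,\sin R\,\omega)|_{t=0}$ vanish; and that the volume forms are related by $(\Pcal|_{t=0})^{*}dS=\Omega_0^{\,d}\,dx$, the volume-form version of the second line of~\eqref{eq:ConformalFactors}. I would then invoke the conformal covariance of $\sqrtDelta$ under stereographic projection: for $G\in C^\infty(\SSS^d)$,
\[
\sqrtDelta\big((\Omega_0^{\nud}G)\circ\Pcal|_{t=0}\big)=\big(\Omega_0^{\,\nud+1}\,\sqrt{\nud^2-\DeltaS}\,G\big)\circ\Pcal|_{t=0},
\]
the order-one case of the classical intertwining between fractional powers of $-\Delta$ on $\Real^d$ and the conformally covariant operators on $\SSS^d$ (a form of the Möbius invariance of the $\Hdot^{1/2}$ Dirichlet form). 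Granting this, \eqref{eq:IcalIsometry} becomes a pullback computation: for smooth $F$ and $f=\Ical F$ one writes $\norm f_{\Hdot^{1/2}}^2=\int_{\Real^d}\overline f\,\sqrtDelta f\,dx$, substitutes the identity above, changes variables to $\SSS^d$ through $\Pcal|_{t=0}$ using $(\Pcal|_{t=0})^{*}dS=\Omega_0^{\,d}\,dx$, and observes that the powers of $\Omega_0$ cancel exactly because $2\nud+1=d$, leaving $\norm F_{H^{1/2}(\SSS^d)}^2$ by the definition~\eqref{eq:SphereSobolev}. Density of $C^\infty(\SSS^d)$ then extends the isometry to all of $H^{1/2}(\SSS^d)$, and surjectivity onto $\Hdot^{1/2}(\Real^d)$ follows since $\Pcal|_{t=0}$ is a bijection (so $\Ical$ has dense range) while an isometric image is closed.

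For~\eqref{eq:OneSidedPropPullback} I would fix smooth $F$, set $U(T,X):=e^{iT\sqrt{\nud^2-\DeltaS}}F(X)$---a smooth solution of $(\partial_T^2-\DeltaS+\nud^2)U=0$ on the Einstein cylinder $\TTT\times\SSS^d$---and define $u:=\tfrac{1}{\sqrt{\nud|\SSS^d|}}\,\Omega^{\nud}\,(U\circ\Pcal)$, which is smooth on $\Real^{1+d}$ because $\Omega>0$ on the image of $\Pcal$. The conformal covariance of the (Lorentzian Yamabe) conformally covariant wave operator in dimension $n=d+1$, applied to $\Pcal$ with factor $\Omega$, gives
\[
(\partial_t^2-\Delta)\big(\Omega^{\nud}(v\circ\Pcal)\big)=\Omega^{\,\nud+2}\Big(\big(\partial_T^2-\DeltaS+\tfrac{n-2}{4(n-1)}R_{\mathrm{cyl}}\big)v\Big)\circ\Pcal ,
\]
where $\tfrac{n-2}{2}=\nud$, the scalar curvature of Minkowski space vanishes, and $\tfrac{n-2}{4(n-1)}R_{\mathrm{cyl}}=\tfrac{d-1}{4d}\cdot d(d-1)=\nud^2$; hence $(\partial_t^2-\Delta)u=0$. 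It then remains to read off the Cauchy data of $u$ at $t=0$: by the coordinate facts, $u|_{t=0}=\tfrac{1}{\sqrt{\nud|\SSS^d|}}\Omega_0^{\nud}(F\circ\Pcal|_{t=0})=\Ical F=f$, while---using $\partial_t\Omega|_{t=0}=0$, the vanishing of the spatial components of $\partial_t\Pcal|_{t=0}$, $\partial_tT|_{t=0}=\Omega_0\circ\Pcal|_{t=0}$, and $\partial_TU|_{T=0}=i\sqrt{\nud^2-\DeltaS}F$---one computes $\partial_tu|_{t=0}=\tfrac{i}{\sqrt{\nud|\SSS^d|}}\,\Omega_0^{\,\nud+1}\big(\sqrt{\nud^2-\DeltaS}F\big)\circ\Pcal|_{t=0}$, which equals $i\sqrtDelta f$ by the covariance identity for $\sqrtDelta$. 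Thus $u$ solves $(\partial_t^2-\Delta)u=0$ with data $(f,\,i\sqrtDelta f)$, so $u=\cos(t\sqrtDelta)f+\tfrac{\sin(t\sqrtDelta)}{\sqrtDelta}(i\sqrtDelta f)=e^{it\sqrtDelta}f$ by uniqueness for the wave equation; this is~\eqref{eq:OneSidedPropPullback} for smooth $F$, and it extends to all of $H^{1/2}(\SSS^d)$ by density together with the isometry~\eqref{eq:IcalIsometry}.

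The main obstacle is the bookkeeping in the two covariance identities---fixing the exponents of the conformal factors and the precise form $\nud^2$ of the curvature term---together with the normalization in the $t=0$ derivative: it is exactly the fact that the momentum of $u$ comes out to be $i\sqrtDelta f$, rather than some other multiple, that selects $u$ as the half-wave (positive-frequency) evolution rather than an arbitrary solution of $(\partial_t^2-\Delta)u=0$. A secondary point is to ensure $u$ is regular and decaying enough to apply uniqueness; this is harmless, since one works first with smooth $F$---for which $u$ decays like $|x|^{-(d-1)}$ at fixed $t$---and then passes to the limit.
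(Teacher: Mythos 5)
Your proposal is correct and follows essentially the same route as the paper: the same conformal covariance identity for $\sqrtDelta$ under stereographic projection gives the isometry via a pullback of the Dirichlet form, and the same conformal covariance of the wave operator under $\Pcal$ plus matching of Cauchy data at $t=0$ gives the propagator identity (the paper merely runs this last step in the reverse direction, pushing $u:=Ce^{it\sqrtDelta}f$ forward to the cylinder and verifying the data of $U$ at $T=0$, rather than pulling $U$ back and verifying the data of $u$ at $t=0$). Your explicit treatment of surjectivity and of the curvature coefficient $\nud^2$ fills in details the paper leaves implicit, but the substance is identical.
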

\begin{proof}
Since the conformal factor of $\Pcal|_{t=0}$ is $\Omega_0$, the volume element $dx$ of $\R^d$ and the volume element $dS$ of $\SSS^d$ transform under $\Pcal|_{t=0}$ as follows;
\begin{equation}\label{eq:JacobianPzero}
	dx = \Omega_0^{-d} dS.
\end{equation}
We also have the conformal fractional Laplacian formula of~\cite[eq.~(2)]{Mor02}, which, in our notation, reads
\begin{equation}\label{eq:BrFoMo}
\begin{array}{cc}
	\sqrtDelta (G\circ \Pcal|_{t=0}) = \Omega_0^{\nud+1}\sqrt{\nud^2-\DeltaS}(\Omega_0^{-\nud} G)\circ \Pcal|_{t=0}, & \forall G\in L^2(\mathbb S^d).
\end{array}
\end{equation}
So, if $f$ and $F$ are related by~\eqref{eq:PenroseTransf}, then
\begin{equation}\label{eq:IcalIsometryProof}
	\begin{split}
	\norm{f}_{\dot{H}^{1/2}(\R^d)}^2=\int_{\mathbb R^d} \sqrt{-\Delta}(f)\overline f\, dx&=\frac{1}{\nu_d\abs{\SSS^d}}\int_{\mathbb R^d} \sqrt{-\Delta}((\Omega_0^{\nu_d} F)\circ\Pcal|_{t=0}) (\Omega_0^{\nu_d}\overline{F})\circ\Pcal|_{t=0}\, dx\\ 
	&=\frac{1}{\nu_d\abs{\SSS^d}}\int_{\mathbb S^d}\sqrt{\nu_d^2-\Delta_{\SSS^d}}(F)\overline{F}\, dS \\
	&=\norm{F}_{H^{1/2}(\SSS^d)}^2,
	\end{split}
\end{equation}
which proves~\eqref{eq:IcalIsometry}. 

We now turn to the proof of~\eqref{eq:OneSidedPropPullback}. For notational convenience, we let $C:=\sqrt{\nud\abs{\SSS^d}}$, so that 
$$
Cf=\Omega_0^{\nu_d} F \circ \Pcal|_{t=0}.$$ 
We denote 
\begin{equation}\label{eq:DAlembertians}
	\begin{array}{cc}
	\Box:=\partial_t^2-\Delta,& \Box_{\mathbb S^d} := \partial_T^2 - \Delta_{\SSS^d} - \nu_d^2,
	\end{array}
\end{equation} 
and we let $u:=Ce^{it\sqrtDelta}f$, noting that $\Box u=0$.  Now we define a function on $\Pcal(\R^{1+d})$ by
\begin{equation}\label{eq:CapitalU_Penrose}
	U:=(\Omega^{-\nu_d} u)\circ \Pcal^{-1}.
\end{equation}
The conformality of $\Pcal$ implies the formula
\begin{equation}\label{eq:ConfDAlembert}
	(\Box u)\circ \Pcal^{-1} = \Omega^{\nu_d+2} \Box_{\SSS^d}U ,
\end{equation}
see, e.g.,~\cite[eq.~(A.3.7)]{Horm}; in particular, 
\begin{equation}\label{eq:SphericalWaveEquation}
	\Box_{\SSS^d} U=0,\ \text{on}\ \Pcal(\R^{1+d}).
\end{equation}
To complete the proof of Lemma~\ref{lem:PenroseTrans}, we need to show that $U=e^{iT\sqrt{\nu^2-\Delta_{\SSS^d}}}F$. Since both functions solve the PDE~\eqref{eq:SphericalWaveEquation}, it will be enough to prove that
\begin{equation}\label{eq:ToProveInitialData}
	\begin{array}{cc}
		U|_{T=0}=F, & \partial_T U|_{T=0} = i\sqrt{\nud^2-\Delta_{\SSS^d}} F.
	\end{array}
\end{equation}
The first of these identities is manifestly true. To compute $\partial_T U|_{T=0}$, we begin by differentiating~\eqref{eq:PenroseEqns}, yielding
\begin{equation}\label{eq:partial_t_at_zero}
	\left.\frac{\partial}{\partial t}\right|_{t=0} = \frac{2}{1+r^2} \left.\frac{\partial}{\partial T}\right|_{T=0} = \Omega_0\left.\frac{\partial}{\partial T}\right|_{T=0}.
\end{equation}
By the definition of $u$, we have that $\partial_t u|_{t=0}= Ci\sqrt{-\Delta}f$. So, by~\eqref{eq:partial_t_at_zero} and~\eqref{eq:BrFoMo},
\begin{equation}\label{eq:partial_T_U}
		\partial_T U|_{T=0}=\Omega_0^{-1-\nu_d}C i \sqrt{-\Delta}f\circ\Pcal|_{t=0}^{-1} =i\sqrt{\nud^2-\Delta_{\SSS^d}}F,
\end{equation}
which concludes the proof of~\eqref{eq:ToProveInitialData} and of Lemma~\ref{lem:PenroseTrans}.

\end{proof}
Now we discuss integration. Similarly to what we observed in the proof of the previous theorem, the volume element $dtdx$ of $\R^{1+d}$ and the volume element $dTdS$ of $\R\times \SSS^d$ transform under the conformal map $\Pcal$ as $$dtdx=\Omega^{-(d+1)}dTdS.$$   
Thus, for each $u\colon\mathbb{R}^{1+d}\to\mathbb C$, 
	\begin{equation}\label{eq:Penrose_change_variable}
	    \begin{split}
		\iint_{\Real^{1+d}} u\, dtdx&= \iint_{\Pcal(\Real^{1+d})} (u\circ \Pcal)\Omega^{-(d+1)}\, dTdS\\ 
	    \end{split}
	\end{equation}
The domain of integration in the right-hand side of~\eqref{eq:Penrose_change_variable} is the triangular region $\Pcal(\Real^{1+d})$ (see Figure~\ref{fig:PenroseTriangle}), which is not a Cartesian product, and this is a nuisance. We will see that we can in fact integrate on a more symmetric region.  

We now introduce the spherical harmonics. For each $\ell\in \N_{\ge 0}$, we let $N_d(\ell)$ denote the number of spherical harmonics of degree $\ell$ on the sphere $\SSS^d$. We fix, once and for all, a complete orthonormal system of $L^2(\SSS^d, \dSu)$ 
\begin{equation}\label{eq:SphHarm}
	\left\{ Y_{\ell, m}\ :\ \ell\in \N_{\ge 0},\ m=1, \ldots, N_d(\ell)\right\},
\end{equation}
with the property that
\begin{equation}\label{eq:SphHarmEigen}
	-\DeltaS Y_{\ell,m}= \ell(\ell+2\nud)Y_{\ell, m}.
\end{equation}
We remark that 
\begin{equation}\label{eq:DEss}
	\sqrt{\nud^2-\DeltaS} Y_{\ell, m}=\sqrt{\nud^2+\ell^2+2\ell\nud}Y_{\ell, m}=(\ell+\nud)Y_{\ell, m}.
\end{equation}
In particular,
\begin{equation}\label{eq:SphericalSobolev}
	\norm{ F}_{H^{1/2}(\SSS^d)}^2= \sum_{\ell\ge 0}\sum_{m=1}^{N_d(\ell)} (\ell+\nud)\lvert\Fhat(\ell, m)\rvert^2,
\end{equation}
where 
\begin{equation}\label{eq:sphericalFourier}
	\Fhat(\ell, m):=\int_{\SSS^d} F(X)Y_{\ell, m}(X)\, \dSu, 
\end{equation}
Finally, as it is well-known, $Y_{\ell, m}(X)$ is a homogeneous harmonic polynomial of degree $\ell$. Thus, in particular, 
\begin{equation}\label{eq:SphHarmSym}
	Y_{\ell, m}(-X)=(-1)^\ell Y_{\ell, m}(X), \qquad \forall X\in \SSS^d.
\end{equation}
We can now prove the point~(ii).
\begin{lemma}\label{lem:SymTrick}
	Let $p=2\frac{d+1}{d-1}$, $F\in H^{1/2}(\SSS^d)$ and $f=\Ical(F)$. For each integer $d\ge 2$, 
	\begin{equation}\label{eq:sym_trick}
		\iint_{\Real^{1+d}} \abs{e^{it\sqrtDelta} f}^p\, dtdx = \frac12C \int_{-\pi}^\pi\int_{\SSS^d} \abs{e^{iT\sqrt{\nud^2-\DeltaS}} F}^p\, \dTu \dSu,
	\end{equation}
	and each odd integer $d\ge 3$, 
	\begin{equation}\label{eq:sym_trick_full_wave}
		\iint_{\Real^{1+d}} \abs{\Re(e^{it\sqrtDelta} f)}^p\, dtdx = \frac12 C\int_{-\pi}^\pi\int_{\SSS^d} \abs{\Re( e^{iT\sqrt{\nud^2-\DeltaS}} F)}^p\,\dTu \dSu.
	\end{equation}
	where $C=C(d)\nud^{-\frac{p}2}\lvert\SSS^d\rvert^{1-\frac{p}2}$.
\end{lemma}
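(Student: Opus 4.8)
The plan is to push the Strichartz integral to the sphere via the Penrose transform of Lemma~\ref{lem:PenroseTrans}, exploit a conformal cancellation in the exponent, and then recover the full cylinder from the triangular Penrose image by a reflection symmetry.

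First I would write $U:=e^{iT\sqrt{\nud^2-\DeltaS}}F$ and $C:=\sqrt{\nud\abs{\SSS^d}}$. By~\eqref{eq:OneSidedPropPullback} and $\Omega>0$, we have $\abs{e^{it\sqrtDelta}f}^p=C^{-p}\bigl(\Omega^{p\nud}\abs{U}^p\bigr)\circ\Pcal$ on $\Real^{1+d}$, and the same with $\Re(\,\cdot\,)$ inserted in front of $U$ on both sides. Using this together with the change of variables~\eqref{eq:Penrose_change_variable} (Jacobian $dt\,dx=\Omega^{-(d+1)}\,dT\,dS$) turns the left-hand side of~\eqref{eq:sym_trick} into
\[
C^{-p}\iint_{\Pcal(\Real^{1+d})}\Omega^{\,p\nud-(d+1)}\,\abs{U}^p\,dT\,dS .
\]
Here is the structural point: $p\nud=2\tfrac{d+1}{d-1}\cdot\tfrac{d-1}{2}=d+1$, so the conformal weight is identically $1$ and the displayed quantity is just $C^{-p}\int_{\Pcal(\Real^{1+d})}\abs{U}^p\,dT\,dS$; the same reduction applies with $\Re U$ for~\eqref{eq:sym_trick_full_wave}.

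Next I would double the domain. In the $(R,T)$ coordinates of~\eqref{eq:polar_coords_sphere}, the region $\Pcal(\Real^{1+d})$ is, by~\eqref{eq:PenroseEqns}, the open triangle $\Delta:=\{R+\abs T<\pi\}$ of Figure~\ref{fig:PenroseTriangle}. Consider the map $\Phi\colon\TTT\times\SSS^d\to\TTT\times\SSS^d$, $\Phi(T,X):=(T+\pi,-X)$; it is an involution, it preserves the product measure $dT\otimes dS$ (translation on $\TTT$ and the antipodal map on $\SSS^d$ are both measure preserving), and, since $(R,\omega,T)\mapsto(\pi-R,-\omega,T+\pi)$ under $\Phi$, it sends $\Delta$ onto $\{R+\abs T>\pi\}$, i.e.\ onto the complement of $\Delta$ in $[-\pi,\pi]\times\SSS^d$ up to a null set. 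Hence $\int_{[-\pi,\pi]\times\SSS^d}\abs{U}^p\,dT\,dS=\int_\Delta\abs{U}^p\,dT\,dS+\int_\Delta\abs{U\circ\Phi}^p\,dT\,dS$, and it is enough to know $\abs{U\circ\Phi}^p=\abs{U}^p$ (resp.\ $\abs{\Re(U\circ\Phi)}^p=\abs{\Re U}^p$ when $d$ is odd); granting this, $\int_\Delta\abs U^p\,dT\,dS=\tfrac12\int_{[-\pi,\pi]\times\SSS^d}\abs U^p\,dT\,dS$. Rewriting $dT\,dS=C_d\abs{\SSS^d}\,\dTu\,\dSu$ and using $C^{-p}=(\nud\abs{\SSS^d})^{-p/2}$ then collects the stated prefactor, with $C(d)=C_d$.

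The invariance of $\Phi$ comes from spherical harmonics: by~\eqref{eq:DEss} and~\eqref{eq:SphHarmSym}, $e^{i\pi\sqrt{\nud^2-\DeltaS}}Y_{\ell,m}=e^{i\pi(\ell+\nud)}Y_{\ell,m}=e^{i\pi\nud}(-1)^\ell Y_{\ell,m}$, so expanding $F$ in $\{Y_{\ell,m}\}$ gives the pointwise identity $U(T+\pi,-X)=e^{i\pi\nud}U(T,X)$, i.e.\ $U\circ\Phi=e^{i\pi\nud}U$. Thus $\abs{U\circ\Phi}=\abs U$ unconditionally, which yields~\eqref{eq:sym_trick}; and $\Re(U\circ\Phi)=\cos(\pi\nud)\Re U-\sin(\pi\nud)\Im U$ equals $\pm\Re U$ precisely when $\sin(\pi\nud)=0$, that is when $\nud=\tfrac{d-1}2\in\Z$, i.e.\ when $d$ is odd, giving~\eqref{eq:sym_trick_full_wave} and explaining why the even case is excluded there. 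The step requiring the most care is exactly this domain doubling — recognizing that the Penrose map fills only the triangle $\Delta$ and is completed to the whole cylinder by the antipodal-plus-half-period reflection $\Phi$, and observing that $\Phi$ preserves $\abs{\,\cdot\,}^p$ always but $\abs{\Re(\,\cdot\,)}^p$ only for integer $\nud$. For full rigor I would first take $F$ a finite combination of spherical harmonics, so that $U$ is smooth and all integrals converge absolutely and the expansion manipulations are legitimate, and then pass to arbitrary $F\in H^{1/2}(\SSS^d)$ by density, using the isometry~\eqref{eq:IcalIsometry} and the $L^p$-boundedness of the propagators.
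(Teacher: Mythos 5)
Your argument is correct and follows the paper's proof essentially step for step: the Penrose pullback with the cancellation $p\nud=d+1$, the spherical-harmonic identity $U(T+\pi,-X)=e^{i\pi\nud}U(T,X)$, and the doubling of the triangle $\Pcal(\Real^{1+d})$ to the full cylinder, with the parity of $d$ entering exactly where you say. The only difference is that you prove the domain-doubling identity \eqref{eq:HalfWaveRectangle} directly via the measure-preserving involution $\Phi$, whereas the paper cites \cite{Negro18} for it; your constant bookkeeping also matches the statement.
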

\begin{remark}\label{rem:Half_Penrose_Novelty}
	Only the formula~\eqref{eq:sym_trick_full_wave} is needed for the proof of Theorem~\ref{thm:main_penrose}. However,~\eqref{eq:sym_trick} will allow us to discuss the case of the half-wave propagator, mentioned in the Introduction; see the forthcoming Remark~\ref{rem:half:wave}.
\end{remark}
\begin{proof}
Combining Lemma~\ref{lem:PenroseTrans}, and the integration formula~\eqref{eq:Penrose_change_variable}, we get
\begin{equation}\label{eq:HalfWaveTriangle}
\begin{split}
	\iint_{\Real^{1+d}} \abs{e^{it\sqrtDelta}f}^p\,dtdx
	&=
	2C\iint_{\Pcal(\Real^{1+d})} \abs{e^{iT\sqrt{\nud^2-\DeltaS}} F}^p\Omega^{p\nud-(d+1)}\, \dTu\dSu 
	\\ &=2C\iint_{\Pcal(\Real^{1+d})} \abs{e^{iT\sqrt{\nud^2-\DeltaS}} F}^p\, \dTu\dSu,
\end{split}
\end{equation}
where we used that $p\nud=d+1$. Now, by~\eqref{eq:DEss},
\begin{equation}\label{eq:spherical_wave}
	e^{iT\sqrt{\nud^2-\DeltaS}} F =\sum_{\ell\ge 0}\sum_{m=1}^{N(\ell)} e^{iT(\ell+\nud)}\Fhat(\ell, m)Y_{\ell, m},
\end{equation}
and so the following crucial simplification occurs;
\begin{equation}\label{eq:u_no_nu}
	\abs{e^{iT\sqrt{\nud^2-\DeltaS}}F}=\abs{\sum_{\ell\ge 0}\sum_{m=1}^{N(\ell)} e^{iT\ell} \Fhat(\ell, m)Y_{\ell, m}}.
\end{equation}
Thus, in particular, $U(T, X):=\abs{e^{iT\sqrt{\nud^2-\DeltaS}}F}^p$ satisfies
\begin{equation}\label{eq:key_symmetry}
	\begin{array}{cc}
	U(T+2\pi, X)=U(T, X),&	U(T+\pi, -X)=U(T, X),
	\end{array}
\end{equation}
for all $(T, X)\in \Real\times \SSS^d$. As observed in~\cite{Negro18},  the symmetries~\eqref{eq:key_symmetry} imply  
\begin{equation}\label{eq:HalfWaveRectangle}
	\int_{-\pi}^\pi \int_{\SSS^d} U(T, X)\, \dTu\dSu = 2\iint_{\Pcal(\Real^{1+d})} U(T, X)\, \dTu\dSu.
\end{equation}
Applying this to~\eqref{eq:HalfWaveTriangle}, we complete the proof of~\eqref{eq:sym_trick}. 

To prove~\eqref{eq:sym_trick_full_wave} it suffices to show that $$U(T, X):=\abs{\Re(e^{iT\sqrt{\nud^2-\DeltaS}}F)}^p$$ satisfies the symmetries~\eqref{eq:key_symmetry}. This is true only if $d$ is an odd integer; see the aforementioned~\cite{Negro18}.
\end{proof}
One immediate consequence of the previous lemma is that $\lVert \Re(e^{it\sqrtDelta}e^{i\theta}f_\star)\rVert_{L^p}$ is independent of $\theta\in\mathbb R$. Indeed,
\begin{equation}\label{eq:theta_independence_fstar}
	\lVert \Re(e^{it\sqrtDelta} e^{i\theta}f_\star) \rVert_{L^p}^p= \frac12 C\int_{-\pi}^\pi \int_{\SSS^d} \lvert \Re e^{i(T\nud+\theta)}\rvert^p\, \dTu\dSu;
\end{equation}
and the last integral is independent of $\theta$.
The previous lemma and Lemma~\ref{lem:PenroseTrans} immediately imply point~(ii) of Theorem~\ref{thm:main_penrose}; we remark that~(ii) is \textbf{false} if $d$ is an even integer.

We turn to a sketchy proof of the remaining point~(iii). We recall that 
\begin{equation}\label{eq:MManifold}
	\MM=\{ r\Gamma(e^{i\theta}f_\star)\ :\ r\ge 0, \theta \in \Real, \Gamma\in\Gcal\},
	\end{equation}
and that the tangent space at $e^{i\theta}f_\star$ is 
\begin{equation}\label{eq:tangent_space_recall}
	T_{e^{i\theta}f_\star} \MM=\mathrm{span}_{\r}\left( f_\star, if_\star, i\sqrtDelta e^{i\theta}f_\star, \partial_{x_j} e^{i\theta}f_\star, \left(\nud+x\cdot\nabla\right)e^{i\theta}f_\star, ix_j\sqrtDelta e^{i\theta}f_\star\right),
\end{equation}
where $j=1, 2, \ldots, d$. All these derivatives can be explicitly computed, and recalling that $f_\star=C2^{\nud}\Ical(1+X_0)$, the computation is elementary, except for the use of~\eqref{eq:BrFoMo}, which yields $$\sqrtDelta(\Omega_0^{\nud})=\nud\Omega_0^{\nud+1}.$$
The result is 
\begin{equation}\label{eq:tangent_space_penrose}
	\Ical^{-1}(T_{e^{i\theta}f_\star}\MM)=\mathrm{span}_\Real\left(1, i, X_0, iX_0, \ldots, X_d, iX_d\right),
\end{equation}
which completes the proof of the point~(iii). Theorem~\ref{thm:main_penrose} is now completely proven.

\subsection{Proof of Theorem~\ref{thm:main_wave}}

We consider the functional 
\begin{equation}\label{eq:psi_tilde}
	\psitilde(F):=\frac{(\psi\circ \Ical)(F)}{\Scal^2},
\end{equation} 
where $\Ical$ is the Penrose isometry of $H^{1/2}(\SSS^d)$ onto $\Hdot^{1/2}(\Real^d)$ defined in Lemma~\ref{lem:PenroseTrans}; thus, as we proved in the previous subsection,
\begin{equation*}
	\psitilde(F)=\norm{F}_{H^{1/2}(\SSS^d)}^2-\lVert \Re \left(e^{iT\sqrt{\nud^2-\DeltaS}}F\right) \rVert_{L^p(\dTp\dSu)}^2,
\end{equation*}
where we recall that $\dTp$ and $\dSu$ are the Lebesgue measures on $\mathbb{T}$ and $\mathbb S^d$ with the normalization
\begin{equation}\label{eq:LebNorm}
	\begin{array}{cc}
		\displaystyle \dTp:=\frac{dT}{\int_{\TTT}\abs{\cos(\nud T')}^p\, dT'}, &\displaystyle \dSu:=\frac{dS}{\abs{\SSS^d}},
	\end{array}
\end{equation}
 and the norm on the Sobolev space $H^{1/2}(\SSS^d)$ is 
\begin{equation}\label{eq:SobolevSphereMainText}
	\norm{F}_{H^{1/2}(\SSS^d)}^2:=\frac{1}{\nud}\int_{\SSS^d}\sqrt{\nud^2-\Delta}(F)\overline{F}\, \dSu.
\end{equation}

In order to apply Theorem \ref{thm:general} and Remark~\ref{rem:RealLinear}, where now $f_\star$ has to be interpreted as the constant function $1$ on $\SSS^d$, we need to verify that $\tilde \psi'(e^{i\theta})(F)=0$ for all $F\in H^{1/2}(\s^d)$ and all $\theta\in\mathbb R$. This is equivalent to showing that there is $\mu\in \r$ such that
\begin{equation}\label{eq:ELSphere_Wave}
	\int_{-\pi}^\pi \int_{\SSS^d} \abs{\Re (e^{i\theta}e^{i\nud T})}^{p-2}\Re(e^{i\theta}e^{i\nud T})\Re(e^{i\sqrt{\nud^2-\DeltaS}T}F)\, \dTp\dSu =\mu \Re\Braket{ F | 1}_{H^{\frac12}},
\end{equation}
for all $F\in H^{1/2}(\SSS^d)$. Now, expanding in spherical harmonics,
$$e^{iT\sqrt{\nud^2-\DeltaS}}F=\sum_{\ell\ge 0}\sum_{m=1}^{N_d(\ell)} e^{iT(\ell+\nud)}\Fhat(\ell, m) Y_{\ell, m}, $$
so, by using the $L^2(\SSS^d)$ orthonormality of $Y_{\ell, m}$ we see that the left-hand side of~\eqref{eq:ELSphere_Wave} reduces to 
$$\left( \int_{-\pi}^\pi\abs{\cos(\nud T+\theta)}^{p-2}\cos(\nud T+\theta) e^{i\nud T}\, \dTp\right)\Re \Fhat(0,0), $$ 
while the right-hand side is $\Re \Fhat(0,0)$. Thus~\eqref{eq:ELSphere_Wave} is satisfied.

We now turn to the verification of the second-order condition~\eqref{eq:real_positive_tangent} of Remark~\ref{rem:RealLinear}. Using Appendix \ref{app:abstvar}, this amounts to the proof that the quadratic form 
\begin{equation}\label{eq:Q_Wave}
	Q(F, F)=2(p-1)\int_{-\pi}^\pi\int_{\SSS^d} \abs{\cos (\nud T+\theta)}^{p-2}\abs{\sum_{\ell\ge 2}\sum_{m=0}^{N_d(\ell)}\Re [e^{iT(\ell+\nud)}\Fhat(\ell, m)]Y_{\ell, m}}^2\, \dTp\dSu,
\end{equation}
satisfies, for some $\rho>0$, the following bound, uniform in $\theta\in\mathbb R$,
\begin{equation}\label{eq:QCoercivity}
	\begin{array}{cc}
	Q(F, F)\le (2-\rho)\norm{F}_{H^{1/2}(\SSS^d)}^2.
	\end{array}
\end{equation}
Here $F$ is such that, letting $f=\Ical(F)$, we have the orthogonality $f\in (T_{e^{i\theta}} M)^\bot$. By the point~(iii) of Lemma~\ref{lem:PenroseTrans}, and recalling that the polynomials of first degree are precisely the spherical harmonics of degree $1$, we see that this is equivalent to 
\begin{equation}\label{eq:ortho_spherical_harmonics}
	\begin{array}{cc}
	\hat{F}(\ell, m)=0,& \text{ for }\ell=0, 1.
	\end{array}
\end{equation}

To prove~\eqref{eq:QCoercivity}, we use the change of variable $T\mapsto T-\theta/\nud$, we let $\phi:=\frac{\theta}{\nud}+\nud$, and we use again the $L^2(\SSS^d)$ orthonormality of $Y_{\ell, m}$ to obtain 
\begin{equation}\label{eq:Q_Wave_Two}
	2(p-1)\sum_{\ell\ge 2}\sum_{m=0}^{N_d(\ell)}
	\int_{-\pi}^\pi\abs{\cos (\nud T)}^{p-2}[\Re e^{i\phi}e^{iT\ell}\Fhat(\ell, m)]^2\, \dTp
\end{equation}
Up to replacing $F\mapsto e^{-i\phi}F$, we can assume that $\phi=0$ without loss.

\subsubsection{The $d=3$ case.} In this case, $p=4$ and $\nu_3=1$, so~\eqref{eq:Q_Wave_Two} reduces to 
\begin{equation}\label{eq:ThreedQ}
	\dfrac{\displaystyle6\sum_{\ell\ge 2}\sum_{m=0}^{N_3(\ell)} \int_{-\pi}^\pi \left[ \Re( \cos T e^{iT\ell} \Fhat(\ell, m))\right]^2\, dT}{\int_{-\pi}^\pi (\cos T)^4\, dT}=4\sum_{\ell\ge 2}\sum_{m=0}^{N_3(\ell)}\abs{\Fhat(\ell, m)}^2,
\end{equation}
And so we conclude that 
\begin{equation}\label{eq:ThreedEnd}
	\frac{Q(F, F)}{\norm{F}_{H^{1/2}(\SSS^3)}^2}= 4\frac{\sum_{\ell\ge 2}\lvert{\Fhat(\ell, m)\rvert}^2}{\sum_{\ell\ge 2}(\ell+1)\lvert{\Fhat(\ell, m)\rvert}^2}\le \frac43, 
\end{equation}
thus,~\eqref{eq:QCoercivity} is satisfied with $\rho=\frac23$. By Theorem~\ref{thm:general}, we can therefore conclude that there are $C>0$ and $\delta\in(0, 1)$ such that 
\begin{equation}\label{eq:ThreeDConclusion}
    \psi(f)\ge C\dist(f, \MM)^2,
\end{equation}
provided that 
\begin{equation}\label{eq:removable_distance_condition}
    \dist(f, \MM)<\delta\norm{f}_{\Hdot^{1/2}(\mathbb R^3)}.
\end{equation}
As stated in the Introduction, the inequality~\eqref{eq:ThreeDConclusion} actually holds even without this condition, up to possibly replacing $C$ with a smaller constant; this follows from the general Theorem~\ref{thm:general_sharpened}, and has been already observed in~\cite{Negro18}.

\subsubsection{The general case $d\ge 5$.} We need the following Fourier series
\begin{equation}\label{eq:cosine_fourier_series}
	\abs{\cos(\beta T)}^{2\alpha}=
	\frac{\Gamma(\alpha+\frac12)}{\sqrt{\pi}\Gamma(\alpha+1)}+\frac{2}{\sqrt\pi}\sum_{h=1}^{\infty}\binom{\alpha}{h}\frac{\Gamma(h+1)\Gamma(\alpha+\frac12)}{\Gamma(\alpha+h+1)}\cos(2h\beta T).
\end{equation}
for all $\alpha>0$ and $\beta\in\Real$.
Recall that $\dTp=\frac{dT}{\int_{-\pi}^\pi\abs{\cos(\nud T')}^p\, dT'}$. Thus, letting $\alpha=p/2, \beta=\nud$, we compute
\begin{equation}\label{eq:dT_normalized}
	\dTp=\frac{\Gamma(\frac{p+2}{2})}{2\sqrt\pi \Gamma(\frac{p+1}{2})}\, dT,
\end{equation}
while letting $\alpha=(p-2)/2$ and $\beta=\nud$, we compute
\begin{equation}\label{eq:CosineFourier}
	\abs{\cos(\nud T)}^{p-2}=a_0+\sum_{h=1}^\infty a_h\cos(2h\nud T),
\end{equation}
where the coefficients are given by
\begin{equation}\label{eq:CosineCoeffs}
	\begin{array}{cc}
	\displaystyle	a_0:=\frac{\Gamma(\frac{p-1}{2})}{\sqrt\pi \Gamma(\frac{p}2)}, &\displaystyle a_h:=\frac{2}{\sqrt\pi}\frac{\Gamma(\frac{p}{2})\Gamma(\frac{p-1}{2})}{\Gamma(\frac{p}{2}+h)\Gamma(\frac{p}{2}-h)},\ h\ge 1;
	\end{array}
\end{equation}

We will also need the formulas 
\begin{equation}\label{eq:Orthog_Formulas}
	\begin{array}{c}
		\displaystyle \int_{-\pi}^\pi (\Re(ze^{iT\ell}))^2\, \dTp=\frac{\sqrt\pi\Gamma(\frac{p+2}{2})}{2\Gamma(\frac{p+1}{2})}\abs z^2, \\ \displaystyle \int_{-\pi}^\pi \cos(2h\nud T)(\Re(ze^{iT\ell}))^2\, \dTp=\frac{\sqrt\pi\Gamma(\frac{p+2}{2})}{4 \Gamma(\frac{p+1}{2})}\Re(z^2)\delta_{\ell, h\nud}.
	\end{array}
\end{equation}
Using all of this, we obtain from~\eqref{eq:Q_Wave_Two} (we omit the sums in $m$ from now on)
\begin{equation}\label{eq:Q_explicit}
	\begin{split}
	Q(F, F)&=\frac{(p-1)\sqrt\pi \Gamma(\frac{p+2}{2})}{\Gamma(\frac{p+1}{2})}\left[ a_0\sum_{\ell\ge 2}\abs{\Fhat(\ell,m)}^2+\frac12\sum_{h=1}^\infty a_h\Re(\Fhat(h\nud,m)^2)\right]\\
	&=\frac{(p-1)\Gamma(\frac{p+2}{2})\Gamma(\frac{p-1}{2})}{\Gamma(\frac{p+1}{2})\Gamma(\frac{p}2)}\left[\sum_{\ell\ge 2}\abs{\Fhat(\ell, m)}^2+\sum_{h=1}^\infty \frac{\Gamma(\frac{p}2)^2\Re[\Fhat(h\nud,m)^2]}{\Gamma(\frac{p}2+h)\Gamma(\frac{p}2-h)}\right],
	\end{split}
\end{equation}
and now we notice, using the property $\Gamma(\beta+1)=\beta\Gamma(\beta)$, that 
\begin{equation}\label{eq:C_Realwave}
	\frac{(p-1)\Gamma(\frac{p+2}{2})\Gamma(\frac{p-1}{2})}{\Gamma(\frac{p+1}{2})\Gamma(\frac{p}2)}=p.
\end{equation}
We thus have the inequality
\begin{equation}\label{eq:SharpLtwo}
	Q(F, F)\le C \sum_{\ell \ge 2} \lvert{\Fhat(\ell, m)\rvert}^2, 
\end{equation}
where 
\begin{equation}\label{eq:C_Sharp}
	C=\max_{h \ge 1} \left(p+ p\abs{\frac{\Gamma(\frac{p}2)^2}{\Gamma(\frac p2+h)\Gamma(\frac p2-h)}}\right).
\end{equation}
We claim that the maximum is attained at $h=1$, so  
\begin{equation}\label{eq:C_h_one}
	C=p+\frac{p-2}{p}.
\end{equation}
Once this is proven, the proof of Step 2 will be completed by observing that 
\begin{equation*}
	\frac{Q(F, F)}{\norm{F}^2_{H^{1/2}}} \le \frac{C\sum_{\ell\ge 2}\lvert{\Fhat(\ell)\rvert}^2}{\frac{1}{\nud}\sum_{\ell\ge 2} (\ell+\nud)\lvert{\Fhat(\ell)\rvert}^2}\le \frac{2(\nud+1)+\frac{\nud}{\nud+1}}{2+\nud}=\frac{2\nud+1}{\nud+1},
\end{equation*}
and $\frac{2\nud+1}{\nud+1}=2-\frac1{\nud+1}$, so~\eqref{eq:QCoercivity} holds with 
\begin{equation}\label{eq:wave_rho}
	\rho=\frac1{\nud+1}.
\end{equation}

To prove that the maximum in~\eqref{eq:C_Sharp} is attained at $h=1$, we introduce the notation
\begin{equation}\label{eq:alpha_var}
	\begin{array}{cc}
		\alpha=\frac{p}2, & \alpha\in (1, \frac32],
	\end{array}
\end{equation}
and we get rid of the Gamma functions with negative arguments via the following computation, in which $(a)_n$ denotes the rising factorial;
\begin{equation}\label{eq:GammaManipulation}
	\frac{\Gamma(\alpha)}{\Gamma(\alpha-h)}=(\alpha-h)_h=(-1)^h(2-\alpha)_{h-1}(\alpha-1)=
	(-1)^{h}(\alpha-1)\frac{\Gamma(h-\alpha+1)}{\Gamma(2-\alpha)},
\end{equation}
so we conclude that 
\begin{equation}\label{eq:Abs_Max}
	\abs{\frac{\Gamma(\alpha)^2}{\Gamma(\alpha+h)\Gamma(\alpha-h)}}= \frac{\Gamma(\alpha)(\alpha-1)}{\Gamma(2-\alpha)}\frac{\Gamma(h-\alpha+1)}{\Gamma(\alpha+h)}.
\end{equation}
We notice now that the function 
\begin{equation}\label{eq:auxiliary_fct}
	g(\alpha, h):=\frac{\Gamma(h-\alpha+1)}{\Gamma(\alpha+h)}
\end{equation}
is decreasing in $h\ge 1$ for each fixed $\alpha>1/2$, thus in particular it is decreasing for fixed $\alpha\in (1, \frac32]$; indeed,
\begin{equation}\label{eq:log_derivative}
	\frac{\partial}{\partial h} \log g(\alpha, h)= (\log\Gamma)'(h-\alpha+1)-(\log\Gamma)'(\alpha +h) \le 0, 
\end{equation}
because the derivative $(\log\Gamma)'$ is an increasing function, since $\Gamma$ is log-convex. This proves that the maximum in~\eqref{eq:C_Sharp} is attained at $h=1$ and concludes the proof.

\begin{remark}\label{rem:half:wave}
As announced in the introduction, Theorem~\ref{thm:main_wave} holds for the functional 
 \begin{equation}\label{eq:half_wave_deficit_again}
    \begin{array}{cc}
\displaystyle     \psi_h(f):=\mathcal{C}_h^2 \norm{f}_{\Hdot^\frac12(\mathbb R^d)}^2-\norm{e^{it\sqrtDelta} f}_{L^p}^2, & \text{ where } \mathcal{C}_h:=\norm{e^{it\sqrtDelta} f_\star}_{L^p(\mathbb R^{1+d}},
\end{array}
\end{equation}
for each $d\ge 2$, regardless of its parity. Via a Penrose transform, the proof amounts to studying the functional
\begin{equation}\label{eq:psi_tilde_explicit}
	\psitilde_h(F)=\norm{F}_{H^{1/2}(\SSS^d)}^2-\lVert e^{iT\sqrt{\nud^2-\DeltaS}}F \rVert_{L^p(\dTp\dSu)}^2,
\end{equation}
with the normalization $$\dTp \dSu=\frac{dTdS}{2\pi \lvert \SSS^d\rvert}.$$
This can be done with computations analogous to, but simpler than the ones of the present section.
\end{remark}

\section{Paraboloid adjoint Fourier restriction - Schrödinger equation}
We now turn to the proof of Theorem~\ref{thm:main_schro}. The functional that we will study here is 
\begin{equation}\label{eq:SchroDeficitRecall}
    \begin{array}{cc}
    \phi(f):=\mathcal{A}_d^2 \norm{f}_{L^2(\mathbb R^d)}^2 - \norm{e^{it\Delta}f}_{L^p(\mathbb R^{d+1})}^2, & p:=2+\frac4d,
    \end{array}
\end{equation}
where $\mathcal{A}_d=4^{-\frac{d}{8+4d}}\left(1+\frac{2}{d}\right)^{-\frac{d^2}{8+4d}}$ and $e^{it\Delta}$ is the Schr\"odinger propagator. We want to apply the method of Section~\ref{sec:method}. To this end, we note that the manifold of Gaussians 
$$
\GG=\{e^{a|x|^2 +b\cdot x + c}: a,c\in \cp, b\in \cp^d \text{ and } \Re \, a <0\}\subset L^2(\mathbb R^d),
$$
can be written in the form 
\begin{equation}\label{eq:Gaussians_are_M}
    \GG=\left\{ z\Gamma(f_\star)\ :\ z\in\mathbb C\setminus \{0\}\, , \Gamma\in \T\right\}, 
\end{equation}
where  $\T$ denotes the group of transformations of $L^2(\mathbb R^d)$ generated by:
\begin{itemize}
\item Space translations $f(x)\mapsto f(x-x_0)$, $x_0\in \r^d$;
\item Frequency translations $f(x)\mapsto e^{ib\cdot x}f(x)$, $b\in \r^d$;
\item Finite time propagation (or time translation) $f(x)\mapsto e^{it_1\Delta}(f)(x)$;
\item Rotations $f(x)\mapsto f(Rx)$, $R\in SO(d)$;
\item Scaling $f(x)\mapsto \la^{-d/2}f(\la x)$,  $\la>0$.
\end{itemize}

Using this symmetry group one can show that
\begin{equation}\label{eq:tangGG}
T_{e^{-\pi |\cdot|^2}} \GG= {\rm span}_{\cp}\{e^{-\pi |x|^2},x_1e^{-\pi |x|^2},x_2e^{-\pi |x|^2},...,x_de^{-\pi |x|^2},|x|^2e^{-\pi |x|^2}\}.
\end{equation}
It is also not difficult to show the vanishing at infinity property~\eqref{eq:vanish_infinity} of $\T$; see, for example, B\'egout and Vargas~\cite{BeVa}. For $d=1, 2$, Theorem~\ref{thm:main_schro} holds in its stronger, global form, as a consequence of the general Theorem~\ref{thm:general_sharpened}. The necessary pre-compactness of maximizing sequences, as in Definition~\ref{def:ProfDec}, has been proven in the aforementioned~\cite{BeVa}, while the uniqueness up to symmetries of the maximizer is due to Foschi~\cite{Fo}.

\subsection{Lens transform} \label{sec:lens}
To define and also give context to the Lens transform we will need to use the Hermite and Laguerre orthogonal functions of $L^2(\r^d)$ and for this we follow the set up of \cite[Sections 2.1, 2.2 and 3.1]{Gon}. For a given vector $\bn \in \z_+^d$ we let 
$$
F_\bn(x) = H_{n_1}(\sqrt{4\pi}\,x_1)...H_{n_d}(\sqrt{4\pi}\,x_d)e^{-\pi |x|^2},
$$
where $H_n(z)$ are the monic Hermite polynomials orthogonal with respect to the Gaussian normal distribution (above $|\cdot|$ stands for the euclidean norm). The functions $\{F_{\bn}\}_{\bn\in\z_+^d}$ form an orthogonal basis of $L^2(\r^d)$ and are eigenfunctions of the Fourier transform
$$
\ft F_{\bn}(\xi) = \int_{\r^d} F_{\bn}(x)e^{-2\pi i x \cdot \xi}dx=(-i)^{|\bn|_1} F_{\bn}(\xi),
$$
where $|\bn|_1=n_1+n_2+...+n_d$.  Similarly, for a given parameter $\nu>-1$ we let $\{L^\nu_m(r)\}_{m\geq 0}$ be the generalized Laguerre polynomials. These are the orthogonal polynomials with respect to the measure 
$$
d\mu(r)=\frac{1}{\nu!} r^\nu e^{-r} {\bf 1}_{[0,\infty]}(r)dr
$$
and normalized in such way that
$$
\int_{0}^\infty L_m^\nu(r)^2 d\mu(r) = \binom{\nu+m} m = L_m^\nu(0).
$$
Let $L^2_{\rad}(\r^d)$ be the subspace of radial functions in $L^2(\r^d)$. For now on we let $\nu=d/2-1$. In this way the functions
\begin{equation}\label{def:lagbasis}
G_m(x) = L_m^{\nu}(2\pi |x|^2)e^{-\pi |x|^2},
\end{equation}
form an orthogonal basis of $L^2_\rad(\r^d)$ and are eigenfunctions of the Fourier transform
$$
\ft G_m(\xi)=(-1)^m G_m(\xi).
$$
\begin{lemma}\label{lemma:flow-lague-herm}
For all $\bn\in\z^d_+$ and $m\in \z_+$ we have
\begin{align*}
& e^{it\Delta } (F_{\bn})(x) \\ & = (1+4\pi it)^{-\frac{d}2}\left({\frac{1-4\pi it}{1+4\pi it}}\right)^{\frac{|\bn|_1}2} F_\bn\left(\frac{x}{\sqrt{1+16\pi^2 t^2}}\right)\exp\left[\frac{4\pi^2it |x|^2}{1+16\pi^2 t^2}\right].
\end{align*}
and
\begin{align*}
& e^{it\Delta } (G_{m})(x) \\ & = (1+4\pi it)^{-\frac d 2}\left(\frac{1-4\pi it }{1+4\pi it}\right)^{m} G_m\left(\frac{x}{\sqrt{1+16\pi^2 t^2}}\right)\exp\left[\frac{4\pi^2 it |x|^2}{1+16\pi^2 t^2}\right].
\end{align*}
\end{lemma}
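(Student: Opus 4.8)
The plan is to reduce both identities to the single elementary fact that the free Schr\"odinger propagator maps Gaussians to Gaussians, by testing against the generating functions of the Hermite and Laguerre polynomials and then comparing Taylor coefficients. First I would fix the Fourier convention of the paper, so that $\ft{(e^{it\Delta}f)}(\xi)=e^{-4\pi^2it|\xi|^2}\ft f(\xi)$; this is the convention consistent with the stated eigenrelations $\ft F_\bn=(-i)^{|\bn|_1}F_\bn$ and $\ft G_m=(-1)^m G_m$. I would then recall the classical generating functions: for the monic Hermite polynomials orthogonal with respect to the standard normal law, $\sum_{n\ge 0}\frac{s^n}{n!}H_n(y)=e^{sy-s^2/2}$, and, for the Laguerre polynomials normalized as in the statement, $\sum_{m\ge 0}L_m^{\nu}(r)s^m=(1-s)^{-\nu-1}e^{-rs/(1-s)}$. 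Substituting $y=\sqrt{4\pi}\,x$, $r=2\pi|x|^2$, and taking products over coordinates, this yields, for $\bo s=(s_1,\dots,s_d)$,
\[
g_{\bo s}(x):=\sum_{\bn\in\z_+^d}\frac{\bo s^{\bn}}{\bn!}F_\bn(x)=\exp\!\Big(-\pi|x|^2+\sqrt{4\pi}\,\bo s\cdot x-\tfrac12\,\bo s\cdot\bo s\Big),
\]
\[
h_s(x):=\sum_{m\ge 0}G_m(x)\,s^m=(1-s)^{-\nu-1}\exp\!\Big(-\pi\,\tfrac{1+s}{1-s}\,|x|^2\Big),
\]
which are Gaussians depending analytically on the parameters ($|\bo s|$ small, respectively $|s|<1$), with the defining series converging in $L^2(\r^d)$.

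The one computation behind everything is that, for $a\in\cp$ with $\re a<0$, $b\in\cp^d$, $c\in\cp$,
\[
e^{it\Delta}\big(e^{a|x|^2+b\cdot x+c}\big)=(1-4ita)^{-d/2}\exp\!\Big(\frac{a|x|^2+b\cdot x}{1-4ita}+c+\frac{it\,(b\cdot b)}{1-4ita}\Big),
\]
obtained by completing the square --- most cleanly on the Fourier side, where $e^{it\Delta}$ is multiplication by a Gaussian and the transform of a Gaussian is a Gaussian. This is a quantitative form of the invariance of $\GG$ under the flow already used in the paper; the branch of $(1-4ita)^{-1/2}$ is the one equal to $1$ at $t=0$, and is consistent with $e^{i(t_1+t_2)\Delta}=e^{it_1\Delta}e^{it_2\Delta}$. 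Since $e^{it\Delta}$ is bounded on $L^2(\r^d)$, I may apply it term by term to the series defining $g_{\bo s}$ and $h_s$.

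The last step is to match. Writing $\lambda:=(1+16\pi^2t^2)^{1/2}$, so that $\lambda^2=(1+4\pi it)(1-4\pi it)$, and $\mu:=\big(\tfrac{1-4\pi it}{1+4\pi it}\big)^{1/2}$ (the branch with $\mu=1$ at $t=0$, so that $\mu/\lambda=(1+4\pi it)^{-1}$), the Gaussian formula applied with $a=-\pi$, $b=\sqrt{4\pi}\,\bo s$, $c=-\tfrac12\bo s\cdot\bo s$ produces an explicit Gaussian for $e^{it\Delta}g_{\bo s}$, while summing the right-hand side of the first asserted identity against $\frac{\bo s^{\bn}}{\bn!}$ gives $(1+4\pi it)^{-d/2}\exp\!\big(\tfrac{4\pi^2it}{\lambda^2}|x|^2\big)\,g_{\mu\bo s}\big(\tfrac{x}{\lambda}\big)$, the scalars $\big(\tfrac{1-4\pi it}{1+4\pi it}\big)^{|\bn|_1/2}=\mu^{|\bn|_1}$ having been absorbed into the generating variable. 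A one-line comparison of the quadratic coefficient ($\tfrac{-\pi+4\pi^2it}{\lambda^2}=\tfrac{-\pi}{1+4\pi it}$), the linear term ($\tfrac{\sqrt{4\pi}\,\mu}{\lambda}\,\bo s\cdot x=\tfrac{\sqrt{4\pi}}{1+4\pi it}\,\bo s\cdot x$) and the constant ($-\tfrac12\mu^2\,\bo s\cdot\bo s$) shows the two Gaussians coincide for all small $\bo s$; equating Taylor coefficients in $\bo s$ gives the Hermite identity. The Laguerre identity follows in the same way from $h_s$, now rescaling the generating variable by $\mu^2=\tfrac{1-4\pi it}{1+4\pi it}$ (an integer power, hence no square root); here the one extra check is that $(1-4ita)^{-d/2}$ with $a=-\pi\tfrac{1+s}{1-s}$ equals $(1-s)^{d/2}(1+4\pi it)^{-d/2}(1-\mu^2 s)^{-d/2}$, so that the offending power of $(1-s)$ cancels against $(1-s)^{-\nu-1}$ precisely because $\nu+1=d/2$, leaving the prefactor $(1+4\pi it)^{-d/2}(1-\mu^2 s)^{-\nu-1}$ required by the claim.

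The main obstacle is not analytic but bookkeeping: choosing mutually consistent conventions for $e^{it\Delta}$, the Fourier transform, and the normalizations of $H_n$ and $L_m^\nu$, and then propagating the factors of $2\pi$, of $\sqrt{4\pi}$, and --- above all --- the branches of $(1+4\pi it)^{-d/2}$ and $\big(\tfrac{1-4\pi it}{1+4\pi it}\big)^{|\bn|_1/2}$ through the generating functions, the Gaussian computation, and the rescalings; the branch ambiguity is pinned down by continuity from $t=0$, where every prefactor is $1$, together with the semigroup property. An alternative that sidesteps generating functions is to verify directly that the claimed right-hand sides solve $\partial_t u=i\Delta u$ with the stated initial data, using that $F_\bn$ and $G_m$ are eigenfunctions of the harmonic oscillator $-\Delta+4\pi^2|x|^2$ with eigenvalues $2\pi(2|\bn|_1+d)$ and $2\pi(4m+d)$ respectively; this merely trades the generating-function manipulation for a somewhat heavier $\partial_t$-computation followed by an appeal to uniqueness for the free Schr\"odinger equation.
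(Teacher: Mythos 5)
Your argument is correct. Note that the paper does not actually prove this lemma: its ``proof'' is a citation to Lemma~11 of the first author's earlier paper \cite{Gon}, so there is no in-text argument to compare against. Your generating-function derivation is a clean, self-contained substitute: the conventions you fix (the propagator acting as multiplication by $e^{-4\pi^2 it|\xi|^2}$ on the Fourier side, the probabilists' Hermite and the standard Laguerre generating functions) are the ones consistent with the paper's normalizations $\ft F_{\bn}=(-i)^{|\bn|_1}F_\bn$ and $\ft G_m=(-1)^mG_m$; the Gaussian propagation formula is easily verified by completing the square on the Fourier side; and the coefficient matching goes through exactly as you indicate --- I checked the quadratic, linear and constant terms in the Hermite case and the prefactor cancellation $(1-4ita)^{-d/2}=(1-s)^{d/2}(1+4\pi it)^{-d/2}(1-\mu^2s)^{-\nu-1+\nu+1-d/2+d/2}$-type bookkeeping in the Laguerre case, where $\nu+1=d/2$ is precisely what kills the stray power of $(1-s)$. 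The absolute $L^2$-convergence of the two series (for all $\bo s$ in the Hermite case, for $|s|<1$ in the Laguerre case) justifies applying the bounded operator $e^{it\Delta}$ term by term and then identifying Taylor coefficients of $L^2$-valued analytic functions, and your branch choices are pinned down by continuity from $t=0$ as you say. The alternative you mention (verifying that the right-hand sides solve $\partial_t u=i\Delta u$ using the harmonic-oscillator eigenrelations) would also work and is closer in spirit to how such formulas are often derived via Mehler's kernel.
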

\begin{proof}
This is \cite[Lemma 11]{Gon}.
\end{proof}

We now are ready to define the Lens transform. For a given function $u:\r^d\times \r\to\cp$ and a given $p>0$ we define the Lens transform of $u$ by
$$
\L u(y,s)=u\left(\frac{y}{\cos(\pi s)\sqrt{2\pi p}},\frac{\tan(\pi s)}{4\pi} \right)(1+i\tan(\pi s))^{\frac{d}2}e^{\frac{1-i\tan(\pi s)}{2p}|y|^2}.
$$
Define the measures
$$
d\be(s)={\boldsymbol 1}_{[-1/2,1/2]}(s)ds
$$ 
and the normal Gaussian distribution in $\r^d$
$$
d\ga(y)=(2\pi)^{-\frac{d}2}e^{-|y|^2/2}dy.
$$
Using the change of variables $y=(2\pi p)^{\frac12}{\cos(\pi s)}x$ and $s=\pi ^{-1}\arctan(4\pi t)$, a routine computation shows that
\begin{align}\label{id:changeofvar}
\|u(x,t)\|_{L^p(\r^{d}\times \r)}= p^{-\frac{d}{2p}}2^{-\frac{2}{p}} \|\L u(y,s)\|_{L^p(d\ga(y)d\be(s))}.
\end{align}
We also define a radial version of the Lens transform for any function $u(|x|,t)$, radial in the variable $x$, by
$$
\L_\rad u(r,s) = u\left(\frac{\sqrt{r}}{\cos(\pi s)\sqrt{2\pi p}},\frac{\tan(\pi s)}{4\pi} \right)(1+i\tan(\pi s))^{\frac{d}2}e^{\frac{1-i\tan(\pi s)}{2p}r}.
$$
A similar change of variables coupled with integration in radial coordinates shows
$$
\|u(x,t)\|_{L^p(\r^{d}\times \r)} =  p^{-\frac{d}{2p}}2^{-\frac{2}{p}} \|\L_\rad u(r,s)\|_{L^p(\r^d,\mu(r))d\be(s))}.
$$
The reason why these definitions are useful and well adapted to our paper (and differ slightly from previous definitions; see \cite{Tao2}) is the following identities, which are implicit in the proofs of \cite[Theorems ~1 and ~5]{Gon}
\begin{align}\label{id:LensofFn}
\L(e^{it\Delta}(F_\bn)(x))(y,s)= H_{\bn}(\sqrt{\tfrac2 p} \, y)e^{-\pi i |\bn|_1 s},
\end{align}
where $H_\bn(x)=H_{n_1}(x_1)...H_{n_d}(x_d)$, and
\begin{align}\label{id:LensofGm}
\L_\rad(e^{it\Delta}(G_m))(r,s)  = L_m^\nu(\tfrac2 p \, r)e^{-2\pi i m s}.
\end{align}
In particular, 
$$\L(e^{-\pi|\cdot|^2})=\L_{\rad}(e^{-\pi|\cdot|^2})=1.$$

We note that the first author derived the Lens transform in \cite{Gon} (without previous knowledge of it) as a way of transforming the Schr\"odinger propagator, in view of Lemma \ref{lemma:flow-lague-herm}, via a change of variables, into a flow over polynomials. Moreover, the fact that the orthonormal basis $\{e^{-2\pi i n s}\}$ appears in the above identity (not quite in \eqref{id:LensofFn}) and that $d\be(s)$ is the indicator functions of the interval $[-1/2,1/2]$ is what makes this version of the Lens transformation a useful tool, it introduces a very convenient orthogonality. 

The following lemmas are crucial.

\begin{lemma}\label{lemma:deficit-hermite}
For
$$
f(x)=\sum_{\bn \in \z_+^d} a(\bn) F_\bn(x) \in L^2(\r^d)
$$
let 
$$
\J f(y)= \sum_{\bn \in \z_+^d} (-i)^{|n|_1} a(\bn) H_\bn(y) \in L^2(d\ga)
$$
Then $2^{-\frac{d}{4}}\J:L^2(\r^d)\to L^2(d\ga)$ is an isometry and
$$
\J (T_{e^{-\pi |\cdot|^2}} \GG)={\rm span}_{\cp}\{1,x_1,x_2,...,x_d,|x|^2\}.
$$
Moreover, if $u(x,t)$ solves the Schr\"odinger equation with initial data $f$ then
\begin{align*}
\phi(f) = 2^{-\frac{2d}{2 + d}} (2 + 4/d)^{-\frac{d^2}{4 + 2 d}}\left(\|\J f\|^2_{L^2(d\ga)} - \|\L(u)(y,2s)\|_{L^p(d\ga(y)d\be(s))}^2\right)
\end{align*} 
\end{lemma}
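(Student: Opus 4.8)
The plan is to verify the three assertions in turn, all of which ultimately reduce to the computation identities \eqref{id:LensofFn}, \eqref{id:LensofGm}, and the change-of-variables formula \eqref{id:changeofvar}, combined with the fact that the Hermite functions $F_\bn$ are Fourier eigenfunctions. First, for the isometry claim: the family $\{F_\bn\}$ is an orthogonal basis of $L^2(\r^d)$, and the monic Hermite polynomials $H_\bn$ are orthogonal in $L^2(d\ga)$ after the rescaling built into the definition of $F_\bn$ (the factor $\sqrt{4\pi}$ in the argument). One computes $\norm{F_\bn}_{L^2(\r^d)}^2$ and $\norm{H_\bn}_{L^2(d\ga)}^2$ explicitly; since $\J$ merely multiplies the $\bn$-th coefficient by the unimodular factor $(-i)^{|\bn|_1}$ and replaces $F_\bn$ by $H_\bn$, the ratio of the two norms is a single constant independent of $\bn$, which one pins down by evaluating at $\bn=\obold$: $F_{\obold}=e^{-\pi|\cdot|^2}$ has $L^2(\r^d)$-norm $2^{-d/4}$ while $H_{\obold}=1$ has $L^2(d\ga)$-norm $1$. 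Hence $2^{-d/4}\J$ is an isometry, and surjectivity is automatic since both sides are completed spans of orthogonal bases.

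Second, for the image of the tangent space: by \eqref{eq:tangGG} we have $T_{e^{-\pi|\cdot|^2}}\GG={\rm span}_{\cp}\{e^{-\pi|x|^2}, x_je^{-\pi|x|^2}, |x|^2 e^{-\pi|x|^2}\}$. The plan is to expand each generator in the Hermite basis $\{F_\bn\}$ and apply $\J$ termwise. Since $H_0=1$, $H_1(z)=z$, $H_2(z)=z^2-$const, the functions $x_j e^{-\pi|x|^2}$ and $|x|^2 e^{-\pi|x|^2}$ are finite linear combinations of $F_{\obold}$ and the $F_\bn$ with $|\bn|_1\le 2$; applying $\J$, which sends $F_\bn\mapsto (-i)^{|\bn|_1}H_\bn(y)$, lands one in ${\rm span}_{\cp}\{1, y_j, |y|^2\}$, and a dimension/linear-independence check shows the span is exactly ${\rm span}_{\cp}\{1,x_1,\dots,x_d,|x|^2\}$ (the unimodular constants $(-i)^{|\bn|_1}$ do not affect the complex span).

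Third, and this is the crux, the deficit identity. Starting from the definition $\phi(f)=\A_d^2\norm{f}_{L^2(\r^d)}^2-\norm{e^{it\Delta}f}_{L^p(\r^{d+1})}^2$, I would apply \eqref{id:changeofvar} to the second term to get $\norm{e^{it\Delta}f}_{L^p(\r^{d+1})}=p^{-d/2p}2^{-2/p}\norm{\L(e^{it\Delta}f)}_{L^p(d\ga\,d\be)}$, with $p=2+4/d$; note the statement writes $\L(u)(y,2s)$, so I must be careful that the argument doubling here matches the $e^{-2\pi i m s}$ versus $e^{-\pi i |\bn|_1 s}$ discrepancy flagged in the text after \eqref{id:LensofGm} (the Hermite Lens identity \eqref{id:LensofFn} carries $e^{-\pi i|\bn|_1 s}$ while the orthonormal system is $\{e^{-2\pi i n s}\}$ on $[-1/2,1/2]$, so one rescales $s\mapsto 2s$ to restore the clean orthogonality; this is precisely the $2s$ in the statement). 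For the first term, the isometry $2^{-d/4}\J$ gives $\norm{f}_{L^2(\r^d)}^2=2^{d/2}\norm{\J f}_{L^2(d\ga)}^2$. Factoring out, $\phi(f)=2^{d/2}\A_d^2\big(\norm{\J f}_{L^2(d\ga)}^2 - 2^{-d/2}\A_d^{-2}p^{-d/p}2^{-4/p}\norm{\L(u)(y,2s)}_{L^p(d\ga\,d\be)}^2\big)$; the assertion is that this outer constant simplifies to $2^{-2d/(2+d)}(2+4/d)^{-d^2/(4+2d)}$. I would verify this by substituting $\A_d=4^{-d/(8+4d)}(1+2/d)^{-d^2/(8+4d)}$ and $p=2+4/d$ and simplifying the exponents of $2$ and of $(1+2/d)$; this is a routine but delicate bookkeeping computation, and it is the step I expect to consume the most care — getting every exponent of $2$, $p$, and $(1+2/d)$ to cancel correctly, and confirming that the $2s$-rescaling introduces only the harmless Jacobian factor $2$ (or its reciprocal) that is absorbed into the constant. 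Finally, I would note that the identity $\L(e^{it\Delta}f)(y,2s)=\sum_\bn (-i)^{|\bn|_1}a(\bn)H_\bn(\sqrt{2/p}\,y)e^{-2\pi i |\bn|_1 s}$, obtained by combining \eqref{id:LensofFn} with the $s\mapsto 2s$ substitution, shows that $\L(u)(\cdot,2s)$ is genuinely the natural object dual to $\J f$ under the Lens correspondence, which is why the deficit takes this symmetric form.
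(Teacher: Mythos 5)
The paper's own ``proof'' of this lemma is a one-line citation to \cite[Theorem~1]{Gon} together with \eqref{id:LensofFn} and \eqref{eq:tangGG}, so your direct verification is a legitimate and more self-contained route; the three computations you outline (matching $\norm{F_\bn}_{L^2(\r^d)}$ against $\norm{H_\bn}_{L^2(d\ga)}$ via the substitution $z=\sqrt{4\pi}\,x$, expanding the tangent vectors $x_je^{-\pi|x|^2}$ and $|x|^2e^{-\pi|x|^2}$ in the $F_\bn$ with $|\bn|_1\le 2$, and the exponent bookkeeping showing $\A_d^2\,2^{-d/2}=p^{-d/p}2^{-4/p}=2^{-\frac{2d}{2+d}}(2+4/d)^{-\frac{d^2}{4+2d}}$) are exactly the content hidden behind that citation and they all check out.

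The one genuine soft spot is your treatment of the argument doubling $s\mapsto 2s$. You propose to absorb ``the harmless Jacobian factor $2$ (or its reciprocal)'' into the constant; that cannot be right, because the constant identity $\A_d^2\,2^{-d/2}=p^{-d/p}2^{-4/p}$ you need has no room for an extra $2^{\pm 2/p}$. What is actually true is that $\norm{\L u(y,2s)}_{L^p(d\ga\, d\be)}=\norm{\L u(y,s)}_{L^p(d\ga\, d\be)}$ \emph{exactly}: from the definition of $\L$ one reads off $\L u(y,s+1)=\L u(-y,s)$ (since $\tan(\pi(s+1))=\tan(\pi s)$ while $\cos(\pi(s+1))=-\cos(\pi s)$), so $\int_{\r^d}\abs{\L u(y,s)}^p d\ga(y)$ is $1$-periodic in $s$ by the reflection invariance of $d\ga$, and hence $\int_{-1/2}^{1/2}\!\int\abs{\L u(y,2s)}^p d\ga\,ds=\tfrac12\int_{-1}^{1}\!\int\abs{\L u(y,s)}^p d\ga\,ds=\int_{-1/2}^{1/2}\!\int\abs{\L u(y,s)}^p d\ga\,ds$. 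This is the exact analogue of the symmetry trick \eqref{eq:key_symmetry} used for the cone. With that substituted for your Jacobian remark, the argument is complete.
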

\begin{proof}
This is \cite[Theorem ~1]{Gon} together with \eqref{id:LensofFn} and \eqref{eq:tangGG}.
\end{proof}

\begin{lemma}\label{lemma:deficit-laguerre}
For
$$
f(x)=\sum_{m \in \z_+} a(m) G_m(x) \in L^2(\r^d)
$$
let 
$$
\J_{\rm rad} f(r)=\sum_{m \in \z_+} a(m) L_m^\nu(r) \in L^2(d\mu).
$$
Then $2^{-\frac{d}{4}}\J_{\rm rad}:L^2_{\rm rad}(\r^d)\to L^2(d\mu)$ is an isometry and
$$
\J_{\rm rad} (T_{e^{-\pi |\cdot|^2}} (\GG\cap L^2_{\rm rad}(\r^2)))={\rm span}_{\cp}\{1,|x|^2\}.
$$
Moreover, if $u(x,t)$ solves the Schr\"odinger equation with initial data $f$ then
\begin{align*}
\phi(f) = 2^{-\frac{2d}{2 + d}} (2 + 4/d)^{-\frac{d^2}{4 + 2 d}}\left(\|\J_{\rm rad}f\|^2_{L^2(d\mu)} - \|\L_{\rm rad}(u)(r,s)\|_{L^p(d\mu(r)d\be(s))}^2\right)
\end{align*}
\end{lemma}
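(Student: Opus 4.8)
The plan is to follow the proof of Lemma~\ref{lemma:deficit-hermite} verbatim, with the Laguerre basis $\{G_m\}$ of~\eqref{def:lagbasis} replacing the Hermite basis $\{F_\bn\}$, the identity~\eqref{id:LensofGm} replacing~\eqref{id:LensofFn}, and the radial Lens transform $\L_\rad$ replacing $\L$. One simplification occurs: the exponential $e^{-2\pi ims}$ in~\eqref{id:LensofGm} is already $1$-periodic, hence orthonormal against $d\be$, so no reparametrization $s\mapsto 2s$ is needed here, in contrast with Lemma~\ref{lemma:deficit-hermite}. There are three things to verify.

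First I would establish the isometry. Since $\{G_m\}_{m\ge0}$ is an orthogonal basis of $L^2_\rad(\r^d)$ and $\{L_m^\nu\}_{m\ge0}$ is an orthogonal basis of $L^2(d\mu)$ with $\norm{L_m^\nu}_{L^2(d\mu)}^2=\binom{\nu+m}m$, it is enough to compare $\norm{G_m}_{L^2(\r^d)}^2$ with $\binom{\nu+m}m$. Writing $x=r\omega$ and substituting $\rho=2\pi r^2$, and using $\nu=\tfrac d2-1$ together with $\abs{\s^{d-1}}=2\pi^{d/2}/\Gamma(\tfrac d2)=2\pi^{d/2}/\nu!$, one gets
\begin{equation*}
\norm{G_m}_{L^2(\r^d)}^2=\abs{\s^{d-1}}\int_0^\infty L_m^\nu(2\pi r^2)^2e^{-2\pi r^2}r^{d-1}\,dr=\frac{\abs{\s^{d-1}}\,\nu!}{4\pi\,(2\pi)^\nu}\binom{\nu+m}m=2^{-d/2}\binom{\nu+m}m.
\end{equation*}
As $\J_\rad(G_m)=L_m^\nu$, this yields $\norm{f}_{L^2(\r^d)}^2=2^{-d/2}\norm{\J_\rad f}_{L^2(d\mu)}^2$, that is, $2^{-d/4}\J_\rad$ is a surjective isometry.

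Next I would treat the tangent space. Intersecting~\eqref{eq:tangGG} with $L^2_\rad(\r^d)$ kills the terms $x_je^{-\pi|\cdot|^2}$ and leaves $\mathrm{span}_\cp\{e^{-\pi|x|^2},\,|x|^2e^{-\pi|x|^2}\}$. Because $L_0^\nu\equiv1$ and $L_1^\nu(r)=\nu+1-r$, one has $e^{-\pi|x|^2}=G_0$ and $|x|^2e^{-\pi|x|^2}=\tfrac1{2\pi}\big((\nu+1)G_0-G_1\big)$; applying $\J_\rad$ and using $\J_\rad(G_0)=1$, $\J_\rad(G_1)=\nu+1-r$ gives $\mathrm{span}_\cp\{1,\,r\}$, which, identifying the Laguerre variable $r$ with $|x|^2$, is $\mathrm{span}_\cp\{1,|x|^2\}$ as in the statement.

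Finally I would assemble the deficit identity. Since $f$ is radial so is $u=e^{it\Delta}f$, and by~\eqref{id:LensofGm} and linearity $\L_\rad(u)(r,s)=\sum_m a(m)L_m^\nu(\tfrac2pr)e^{-2\pi ims}$. Combining the isometry with the radial version of~\eqref{id:changeofvar} displayed right after the definition of $\L_\rad$, one obtains
\begin{equation*}
\phi(f)=\A_d^2\,2^{-d/2}\norm{\J_\rad f}_{L^2(d\mu)}^2-p^{-d/p}2^{-4/p}\norm{\L_\rad(u)(r,s)}_{L^p(d\mu(r)d\be(s))}^2,
\end{equation*}
and for this to factor through a single constant one checks, by a direct manipulation of the definitions of $\A_d$ and $p=2+\tfrac4d$, that
\begin{equation*}
\A_d^2\,2^{-d/2}=p^{-d/p}2^{-4/p}=2^{-\frac{2d}{2+d}}\Big(2+\tfrac4d\Big)^{-\frac{d^2}{4+2d}},
\end{equation*}
which is exactly the constant in the statement; altogether this is \cite[Theorem~5]{Gon} together with~\eqref{id:LensofGm} and~\eqref{eq:tangGG}, in complete parallel with Lemma~\ref{lemma:deficit-hermite}. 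I expect no deep difficulty here: the substantive inputs,~\eqref{id:LensofGm} and the change-of-variables formula, are already available, and the only point requiring real care is this last bookkeeping, namely verifying that the two a priori different normalizing factors $\A_d^2\,2^{-d/2}$ and $p^{-d/p}2^{-4/p}$ coincide so that the deficit genuinely factors as claimed.
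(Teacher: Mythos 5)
Your proposal is correct and follows the same route as the paper, which simply cites \cite[Theorem~5]{Gon} together with~\eqref{id:LensofGm} and~\eqref{eq:tangGG}; you have merely written out the details (the computation $\norm{G_m}_{L^2(\r^d)}^2=2^{-d/2}\binom{\nu+m}{m}$, the radial restriction of the tangent space, and the coincidence $\A_d^2\,2^{-d/2}=p^{-d/p}2^{-4/p}$) that the citation leaves implicit, and all three check out.
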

\begin{proof}
This is \cite[Theorem ~5]{Gon} in conjunction with \eqref{id:LensofGm} and \eqref{eq:tangGG}.
\end{proof}

\subsection{Proof of Theorem \ref{thm:main_schro}}

\begin{proof}
{\bf Step 1.} First we show that Gaussians satisfy the Euler-Lagrange equations, that is, Gaussians are critical points of the deficit function $\phi$. For a given $g\in L^2(d\ga)$ let $f=\J^{-1}(g)$ and $u$ solve Sch\"rodinger's equation with initial data $f$. Define
$$
\HH^s g(y) = \L(u)(y,2s)
$$
and 
$$
 \phi_{lens}(g)=  \|g\|^2_{L^2(d\ga)} - \|\HH^s g\|_{L^p(d\mu(r)d\be(s))}^2.
$$
Due to \eqref{id:LensofFn} and Lemma \ref{lemma:deficit-hermite} we have
$$
\HH^s(H_\bn)(y)=H_\bn(\sqrt{\tfrac{2}{p}}\, y)e^{-2\pi i |\bn|_1 s}
$$
and
$$
\phi_{lens}(g)=2^{\frac{2d}{2 + d}} (2 + 4/d)^{\frac{d^2}{4 + 2 d}} \phi(f).
$$
We can now use Appendix \ref{app:abstvar} to obtain
\begin{align*}
 2^{\frac{2d}{2 + d}} (2 + 4/d)^{\frac{d^2}{4 + 2 d}} \phi'(e^{-\pi|\cdot|^2})f  & = \phi_{lens}'(1)(g,g)
\\ & = 2 \Re \int_{\r^d}g(y)d\ga(y)-2 \, \Re \int_{-\frac12}^{\frac12} \int_{\r^d} \HH^s(g)(y)d\ga(y)ds \\
& = 2\Re \, a(\bo 0) - 2\Re \, a(\bo 0)  = 0.
\end{align*}

\noindent {\bf Step 2: Reduction to radial.} By Theorem \ref{thm:general} it suffices to show
$$
\phi''(e^{-\pi|\cdot|^2})(f,f) \geq c \|f\|^2_{L^2(\r^d)}
$$
for all $f\in L^2(\r^d)$ orthogonal to the tangent space of $\GG$ at $e^{-\pi |\cdot|^2}$. We will now show that the minima of 
$$f\mapsto \|f\|^{-2}_{L^2(\r^d)}\phi''(e^{-\pi|\cdot|^2})(f,f)$$ over all $ (T_{e^{-\pi |\cdot|^2}}\GG)^\perp$ is the same as the minima over $ (T_{e^{-\pi |\cdot|^2}}\GG)^\perp \cap L_{\rm rad}^2(\r^d)$.  Let $f\in (T_{e^{-\pi |\cdot|^2}}\GG)^\perp$. We can apply Lemma \ref{lemma:flow-lague-herm} in conjunction with Appendix \ref{app:abstvar} to obtain
\begin{align*}
\phi''(e^{-\pi|\cdot|^2})(f,f)  =2 \, \A_d^2  \,  \|f\|^2_{L^2(\r^d)} - C \int_\r \int_{\r^d} w(|x|,t)^{p-2}|e^{it\Delta} f(x)|^2dxdt,
\end{align*}
for some $C>0$, where $w(r,t)=(1+16\pi^2 t^2)^{-d/4} e^{-\frac{\pi^2 r^2}{1+16\pi^2t^2}}$. Note that a priori there are two missing terms in the above calculation, however is not hard to show they vanish. It is now enough to show the quadratic form
$$
Q(f)= \int_\r \int_{\r^d} w(|x|,t)^{p-2}|e^{it\Delta} f(x)|^2dxdt
$$
is maximized when $f$ is radial and for that we use the decomposition $\r^d=\s^{d-1}\times \r_+$. 

We let 
$$
H_k=\{g(|x|)Y_k(x) \in L^2(\r^d): Y_k \text{ is a spherical harmonic of degree } k\},
$$
where a spherical harmonic of degree $k$ is an homogenous polynomial of degree $k$ in the kernel of $\Delta$. Let $N_{d-1}(k)$ be the dimension of the spherical harmonics of degree $k$ and $\{Y_{k,m}\}_{m=1}^{N_{d-1}(k)}$ an orthonormal basis. Then it is classical that the functions
$$
\Psi_{k,m,n}(x):=L^{\nu+k}_n(2\pi |x|^2) e^{-\pi |x|^2}Y_{k,m}(x), \ \ \ (m=1,...,N_{d-1}(k); \ n\geq 0)
$$
form an orthogonal basis of $H_k$, where $\nu=d/2-1$ and $L^{\nu+k}_n(z)$ is the Laguerre polynomial. Note that $H_0=L_{\rm rad}^2(\r^d)$ and $\Psi_{0,0,n}=G_n$, where $G_n$ is the Laguerre basis defined in \eqref{def:lagbasis}. Moreover, an application of the Hecke-Bochner formula \eqref{hamonicextension} and identity \cite[7.421-4]{GR} shows that
$$
\ft {\Psi_{k,m,n}} = (-i)^k (-1)^n \Psi_{k,m,n}.
$$
Using \eqref{eq:tangGG} in conjunction with the orthogonality properties of Laguerre polynomials one can deduce that
$$
\wt H_k :=(T_{e^{-\pi |\cdot|^2}}\GG)^\perp \cap H_k = \overline{{\rm span}}\left\{\Psi_{k,m,n}: k+n\geq 2\right\}.
$$

Let $Tf(x,t)=w(x,t)^{p/2-1}e^{it\Delta}f(x)$, so $Q(f)=\Braket{T^*Tf | f}$. Applying again the Hecke-Bochner formula \eqref{hamonicextension} one can show that $T^*T (\wt H_k) \subset H_k$, which implies that
$$
\sup_{f\in (T_{e^{-\pi |\cdot|^2}}\GG)^\perp\setminus \{0\}}\|f\|_{L^2(\r^d)}^{-2}Q(f) = \sup_{k\geq 0}\sup_{f\in \wt H_k\setminus\{0\}}\|f\|_{L^2(\r^d)}^{-2}Q(f).
$$
Now  for $k\geq 1$ fix $f_k(x)=g_k(|x|) Y_k(x) \in \wt H_k\setminus \{0\}$, where $Y_k$ is some spherical harmonic of degree $k$ and unit $L^2(\s^{d-1})$-norm. Let 
$$
f^\ep_0(x)=g_k(|x|)|x|^{k}|\s^{d-1}|^{-1/2} + \ep \Psi_{0,0,2}(x)\in H_0, \ \ \   (\text{for small } \ep >0)
$$
and note $\|f_k\|_{L^2(\r^d)}=\|f_0^\ep\|_{L^2(\r^d)}+O(\ep)$. Apriori, $f_0^\ep$ may not be in $\wt H_0$, but in Step ~4 below we show that $Q$ further decomposes over $H_0$ in the form
$$
Q\left(\sum_{n\geq 0} a(n)\Psi_{0,0,n}\right) = \sum_{n\geq 2} |a(n)|^2Q(\Psi_{0,0,n}).
$$
Therefore, if $P:H_0\to\wt H_0$ is the orthogonal projection, $P(\Psi_{0,0,n})=\Psi_{0,0,n}$ if $n\geq 2$ and $P(\Psi_{0,0,n})=0$ otherwise, we see that $Q\circ P = Q$ in $\wt H_0$.  We obtain that $P(f_0^\ep)\neq 0$, for small $\ep>0$, and
$$
\frac{Q(f^\ep_0)}{\|f^\ep_0\|_{L^2(\r^d)}^2}=\frac{Q(P(f^\ep_0))}{\|f^\ep_0\|_{L^2(\r^d)}^2} \leq \frac{Q(P(f^\ep_0))}{\|P (f^\ep_0)\|_{L^2(\r^d)}^2} \leq \sup_{f\in \wt H_0\setminus\{0\}}\|f\|_{L^2(\r^d)}^{-2}Q(f).
$$
Taking $\ep\to 0$ we conclude that for ${\|f_0\|_{L^2(\r^d)}^{-2}}{Q(f_0)} \leq \sup_{f\in \wt H_0\setminus\{0\}}\|f\|_{L^2(\r^d)}^{-2}Q(f)$, where $f_0=f_0^0$. Since $\|f_k\|_{L^2(\r^d)}=\|f_0\|_{L^2(\r^d)}$ is now enough to show that $Q(f_k) \leq Q(f_0)$.

\noindent {\bf Step 3: Reduction to radial continued.} By the Hecke-Bochner formula \eqref{hamonicextension} we have $\ft f_k(x)=h_k(|x|)Y_k(x)$. We first observe that
\begin{align*}
& Q(f_k) \\ &  =\int_\r\int_{\r^d} w(|x|,t)^{p-2}|e^{it\Delta}f_k(x)|^2dxdt 
\\ & = \int_\r\int_{\r^d} (\ft{ w(|\cdot|,t)^{p-2} }*e^{-4\pi^2it |\cdot|^2}\ft{f_k} )(x) e^{4\pi^2 it |x|^2 }\ov{\ft f_k(x)}dxdt 
\\& = \int_\r  \frac{(1+16\pi^2 t^2)^{\frac{d}2-(p-2)\frac{d}4}}{(\pi(p-2))^{\frac{d}2}} dt \int_{\r^{2d}} e^{-\frac{1+16\pi^2t^2}{p-2}|x_1-x_2|^2} e^{-4\pi^2it |x_2|^2}\ft{f_k}(x_2) e^{4\pi^2 it |x_1|^2 }\ov{\ft f_k(x_1)}   dx_1dx_2
\\ & =\int_\r  \frac{(1+ t^2)^{\frac{d}2-(p-2)\frac{d}4}}{{4\pi(\pi(p-2))^{\frac{d}2}}} dt \int_{\r^{2d}} e^{-\frac{1+t^2}{p-2}|x_1-x_2|^2} e^{-\pi it |x_2|^2}\ft{f_k}(x_2) e^{\pi it |x_1|^2 }\ov{\ft f_k(x_1)}   dx_1dx_2
\\ & =\int_\r  \frac{(1+ t^2)^{\frac{d}2-(p-2)\frac{d}4}}{{4\pi(\pi(p-2))^{\frac{d}2}}} Q_t(\ft f_k) dt,
\end{align*}
where 
$$
Q_t(\ft f_k) = \int_{\r^d}\int_{\r^d} e^{-\frac{1+t^2}{p-2}|x_1-x_2|^2} e^{-\pi it |x_2|^2}\ft{f_k}(x_2) e^{\pi it |x_1|^2 }\ov{\ft f_k(x_1)}   dx_1dx_2.
$$
We will now proceed by showing that in fact $Q_t(\ft f_k) \leq Q_t(\ft f_0)$ for all $t\in\r$, which straightforwardly implies $Q(f_k)\leq Q(f_0)$. At this point we recall the Funk-Hecke Formula
\begin{align}\label{funkhecke}
\int_{\s^{d-1}} u(\eta \cdot \xi)Y_k(\xi)d\si(\xi) = \frac{Y_k(\eta)|\s^{d-2}|}{C_k^\nu(1)}\int_{-1}^1 {C_k^\nu(\al)}u(\al) (1-\al^2)^{\nu-1/2}d\al,
\end{align}
where $C_k^\nu(\al)$ is the Gegenbauer polynomial, $\nu=d/2-1$ and $|\eta|=1$. We  obtain
\begin{align*}
& \int_{\r^d} e^{-(\frac{1+t^2}{p-2}+\pi it)|x_1|^2} e^{\frac{1+t^2}{p-2}x_1\cdot x_2}\ft{f_k}(x_2)dx_2  
\\ & = \frac{|\s^{d-2}|Y_k(x_1/|x_1|)}{C_k^\nu(1)}\int_{0}^\infty \int_{-1}^1 e^{-(\frac{1+t^2}{p-2}+\pi it)r_2^2}r_2^{k+d-1} h_k(r_2) e^{\frac{1+t^2}{p-2}r_2|x_1|\al}{C_k^\nu(\al)}(1-\al^2)^{\nu-1/2}d\al dr_2,
\end{align*}
which implies
\begin{align*}
& Q_t(f_k)  = {|\s^{d-2}|}\int_0^\infty \int_0^\infty \ov{h_k(r_1)}{h_k(r_2)}(r_1r_2)^{k+d-1}a_k(t,r_1,r_2)dr_1dr_2,
\end{align*}
where
\begin{align*}
a_k(t,r_1,r_2)  =\int_{-1}^1 e^{-(\frac{1+t^2}{p-2}+\pi it)r_1^2-(\frac{1+t^2}{p-2}-\pi it)r_2^2+\frac{1+t^2}{p-2}r_2r_1\al}\frac{C_k^\nu(\al)}{C_k^\nu(1)}(1-\al^2)^{\nu-1/2}d\al.
\end{align*}
A similar computation leads to
$$
Q_t(f_0)={|\s^{d-2}|}\int_0^\infty \int_0^\infty h_k(r)\ov{h_k(s)}(rs)^{k+d-1}a_0(t,r,s)drds,
$$
where
$$
a_0(t,r_1,r_2) = \int_{-1}^1 e^{-(\frac{1+t^2}{p-2}+\pi it)r_1^2-(\frac{1+t^2}{p-2}-\pi it)r_2^2+\frac{1+t^2}{p-2}r_2r_1\al} (1-\al^2)^{\nu-1/2}d\al.
$$
It is now enough to show that for every $t\in\r$ the kernel
\begin{align*}
b_k(t,r_1,r_2)  & :=a_0(t,r,s)-a_k(t,r,s) \\ & =  \int_{-1}^1 e^{-(\frac{1+t^2}{p-2}+\pi it)r_1^2-(\frac{1+t^2}{p-2}-\pi it)r_2^2+\frac{1+t^2}{p-2}r_2r_1\al} \left[1-\frac{C_k^\nu(\al)}{C_k^\nu(1)}\right](1-\al^2)^{\nu-1/2}d\al
\end{align*}
is positive semi-definite for $(r_1,r_2)\in \r_+^2$.  It is well-known that $\left|\frac{C_k^\nu(\al)}{C_k^\nu(1)}\right|\leq 1$ for $-1<\al<1$, and thus $b_k(t,r_1,r_2)$ is a continuous positive linear combination (in the parameter $\al$) of the kernels 
\begin{equation*}
e^{-(\frac{1+t^2}{p-2}+\pi it)r_1^2-(\frac{1+t^2}{p-2}-\pi it)r_2^2}\times e^{\frac{1+t^2}{p-2}r_2r_1\al}
\end{equation*}
and these are visibly positive definite for $(r_1,r_2)\in \r_+^2$ (being a product of positive definite kernels). Thus $b_k(t,r_1,r_2)$ is positive definite as desired.

\noindent {\bf Step 4.} By the Step ~2 and ~3 it is sufficient to show that
$$
\phi''(e^{-\pi|\cdot|^2})(f,f) \geq c \|f\|^2_{L^2(\r^d)}
$$
for all $f\in L_{\rm rad}^2(\r^d)$ orthogonal to the tangent space of $\GG$ at $e^{-\pi |\cdot|^2}$ and for that we use the radial Lens transform.
For a given  $f\in L^2_{\rm rad}(\r^d)$ with expansion
$$
f(x)=\sum_{m\geq 0} a(m)L_m^\nu(2\pi |x|^2)e^{-\pi |x|^2},
$$
where $\nu=d/2-1$, let 
$$
g(r)=\J_{\rm rad}(f)(r)=\sum_{m\geq 0} a(m)L_m^\nu(r) \in L^2(d\mu).
$$
Let $u$ solve Sch\"rodinger's equation with initial data $f$. Define
$$
\RR^s g(r) = \L_{\rm rad}(u)(r,s)
$$
and 
$$
 \phi_{\rm rad.lens}(g)=  \|g\|^2_{L^2(d\mu)} - \|\RR^s g\|_{L^p(d\mu(r)d\be(s))}^2.
$$
Due to \eqref{id:LensofGm} and Lemma \ref{lemma:deficit-laguerre} we have
$$
\RR^s(L^\nu_m)(r)=L^\nu_m({\tfrac{2}{p}}\, r)e^{-2\pi i m s},
$$
and
$$
\phi_{\rm rad.lens}(g)=2^{\frac{2d}{2 + d}} (2 + 4/d)^{\frac{d^2}{4 + 2 d}} \phi(f).
$$
Using Lemma \ref{lemma:deficit-laguerre} in conjunction with Appendix \ref{app:abstvar} we deduce
\begin{align*}
2^{\frac{2d}{2 + d}} (2 + 4/d)^{\frac{d^2}{4 + 2 d}}\phi''(e^{-\pi|\cdot|^2})(f,f)  &  =  \phi''_{\rm rad.lens}(1)(g,g) \\ 
 & = 2\|g\|_{L^2(d\mu)}^2 - p\int_{-1/2}^{1/2} \int_0^\infty |\RR^s(g)|^2d\mu \\
 & = 2\sum_{m\geq 2}(1-c_m) |a(m)|^2 L_n^\nu(0),
\end{align*}
where
$$
c_m=L_m^\nu(0)^{-1}\frac{p}{2}\int_0^\infty L_m^\nu(\tfrac{2}{p} r)^2 d\mu(r).
$$
Note that by Appendix \ref{app:abstvar} there are some other terms in the second variation that did not show up above, but they can easily be shown to be zero. All we now need to prove is that
$$
{c_m} \leq 1-\ep
$$
for all $m\geq 2$ and some $\ep>0$, which is the content of the next step.

\noindent {\bf Step 5.} First we use the following identity
\begin{equation}\label{id:lagidlambda}
\frac{L_n^\nu(\la r)}{L_n^\nu(0)} = \sum_{j=0}^n \binom n j (1-\la)^{n-j}\la^j \frac{L_j^\nu(r)}{L_j^\nu(0)}
\end{equation}
to obtain
\begin{align}\label{cmsummformula}
{c_m} & = \frac{p}{2} \sum_{j=0}^m \binom{m+\nu}{m-j} \binom m j (1-2/p)^{2m-2j}(2/p)^{2j}.
\end{align}
Identity \eqref{id:lagidlambda} can be easily derived using the generating function
$$
\sum_{n\geq 0} w^n L_n^\nu(r) = \frac{e^{-rw/(1-w)}}{(1-w)^{\nu+1}}.
$$
To obtain an asymptotic expansion for $c_m$ we separate the cases $p\neq 4$ and $p=4$. If $p=4$ (hence $d=2$) we obtain
\begin{align*}
{c_m} = 4^{-m}\frac{p}{2} \binom {2m+\nu} m \sim \frac{p2^{\nu-1}}{\sqrt{\pi m}},
\end{align*}
hence $c_m = O(m^{-1/2})$. For $p\neq 4$ we observe that the series in \eqref{cmsummformula} coincides with the series of a Jacobi polynomial, that is, 
$$
{c_m} = \frac{p}{2} Z^{-m}P_m^{(\nu,0)}(X),
$$
where $P^{(\nu,0)}_m(X)$ is the a Jacobi polynomial, $Z=(1-4/p)^{-1}$ and $X=\tfrac12 Z + \tfrac12 Z^{-1}$. Noticing that $|Z|>1$, we can then apply \cite[Theorem 8.21.9]{Sz} to obtain
\begin{align*}
Z^{-m}P_m^{(\nu,0)}(X) \sim \frac{\kappa}{\sqrt{m}},
\end{align*}
for some explicit $\kappa>0$ depending only on $Z$ and $\nu$. We again obtain $c_m= O(m^{-1/2})$.  

To finish the proof is now enough to show that $c_m < 1$ for all $m\geq 2$. First we introduce the variable $Y=\nu+1$. Using that
$$
\binom {m+\nu}{m-j} = \frac{1}{(m-j)!}\sum_{\ell=0}^{m-j} \sigma_{m-j,\ell}(j,...,m-1)Y^{m-j-\ell},
$$
where $\sigma_{m-j,\ell}$ is the $\ell$-th symmetric function in $m-j$ variables we obtain
\begin{align*}
Y(1+Y)^{2m-1}c_m & = \sum_{j=0}^m \sum_{\ell=0}^{m-j} \binom m j \frac{1}{(m-j)!} \sigma_{m-j,\ell}(j,...,m-1)Y^{m+j-\ell}  \\
& =  \sum_{k=1}^{2m} Y^k \sum_{j=\max\{0,k-m\}}^{\min\{m,\lfloor k/2\rfloor\}} \binom m j \frac{1}{(m-j)!} \sigma_{m-j,m+j-k}(j,...,m-1) \\
& <  \sum_{k=1}^{2m} Y^k \sum_{j=\max\{0,k-m\}}^{\min\{m,\lfloor k/2\rfloor\}}\binom m j \frac{1}{(m-j)!} \binom {m-j} {m+j-k} \frac{(m-1)!}{(k-j-1)!} \\
& =  \sum_{k=1}^{2m} Y^k \sum_{j=\max\{0,k-m\}}^{\min\{m,\lfloor k/2\rfloor\}}\binom m j \binom {m-1} {k-j-1} \frac{1}{(k-2j)!} \\
& < \sum_{k=1}^{2m} Y^k \sum_{j=0}^{\min\{m,k-1\}}\binom m j \binom {m-1} {k-j-1}
\\ & =  \sum_{k=1}^{2m} Y^k \binom {2m-1}{k-1}
\\ & = Y(1+Y)^{2m-1}.
\end{align*}
Above we used that $\sigma_{a,b}(x_1,...,x_a)\leq x_a...x_{a-b+1}\binom{a}{b}$ if $x_a\geq x_{a-1}\geq ... \geq x_{1}$ and that $m\geq 2$, so we get only strict inequalities. This shows $c_m<1$ and finishes the proof.
\end{proof}

\section{Sphere adjoint Fourier restriction}
Here we prove Theorem~\ref{thm:sphere}, concerning the functional 
\begin{equation}\label{eq:SphereDeficitRecall}
    \begin{array}{cc}
\zeta(f) =  \C_d^2\|f\|_{L^2(\s^{d-1})}^2 - \|\ft {f\sigma}\|_{L^p(\r^d)}^2
, & p:=2+\frac4{d-1},
    \end{array}
\end{equation}
where $\C_d :=(2\pi)^{d/2}|\s^{d-1}|^{-\frac{1}{d+1}}(\int_0^\infty |J_{d/2-1}|^{p} \frac{dr}{r^{\frac{d-3}{d-1}}})^{1/p}$ and $\sigma$ is the surface measure on $\mathbb S^{d-1}$. As before, we apply the method of Section~\ref{sec:method}. The relevant manifold is now
$$
\CC:=\{a e^{ix\cdot v} : a\in \cp\setminus\{0\} \text{ and } v\in \r^d\}
\subset L^2(\mathbb S^{d-1}),
$$
and it can be written in the form $\GG=\left\{ z\Gamma(f_\star)\ :\ z\in\mathbb C\setminus \{0\}\, , \Gamma\in \mathcal{F}\right\},$ 
by letting $\mathcal F$ denote the group of symmetries generated by the frequency translations $f(x)\mapsto e^{ib\cdot x}f(x)$, $b\in \r^d$, and letting $f_\star$ denote the constant function $1$. The tangent space is
\begin{equation}\label{eq:tangCC}
T_{1} \CC= {\rm span}_{\mathbb R}\{1, i, ix_j\ :\ j=1,\ldots, d\}.
\end{equation}
The vanishing at infinity property~\eqref{eq:vanish_infinity} of $\F$ is easy to prove. Finally, for $d= 2$, Theorem~\ref{thm:sphere} holds in its stronger, global form, as a consequence of the general Theorem~\ref{thm:general_sharpened}; The necessary pre-compactness of maximizing sequences, as in Definition~\ref{def:ProfDec}, has been proven by Christ and Shao~\cite{ChSh12}, while the uniqueness up to symmetries of the maximizer is due to Foschi~\cite{Fo2}.  
\subsection{Proof of Theorem \ref{thm:sphere}}
{\bf Step 1.} An application of the Funk-Hecke formula \eqref{funkhecke} in conjunction an integral representation of Bessel functions via Fourier transform one can show that for every spherical harmonic $Y_k$ we have
\begin{align}\label{hamonicextension}
\ft{\si Y_k}(r\xi) = \int_{\s^{d-1}} Y_k(\eta) e^{-ix\eta}d\si(\eta)=(2\pi)^{d/2}(-i)^kA_{\nu+k}(r) r^k Y_k(\xi),
\end{align}
where $|\xi|=1$, $r>0$, $\nu=d/2-1$, $A_\nu(z)=J_\nu(z)/z^\nu$ and $J_\nu$ is the Bessel function of the first kind. By Appendix \ref{app:abstvar} we conclude that
\begin{align*}
\zeta'(1)(f) & = 2\S_d^2 \Re \Braket{1|f}_{L^2(\s^{d-1})} - 2\|\ft \sigma\|_{L^p(\r^d)}^{2-p}\Re \int_{\r^d}|\ft \si(x)|^{p-2}\ft \si(x) \overline{\ft {\si f}(x)}dx  = 0-0,
\end{align*}
if $f\in (T_1\CC)^\perp$ (note $1\in T_1\CC$), where $p=2\frac{d+1}{d-1}$. We also have that $f=v\cdot x$ belongs to $T_1\CC$ for any $v\in \r^d$, as the function $f(x)=e^{i\ep x\cdot v}$ belongs to $\CC$ for all $\ep\in \r$ and $\zeta(f)=0$. This implies a remarkable identity
$$
\int_0^\infty |A_\nu(r)|^{p}r^{2\nu+1}dr=(p-1)\int_0^\infty |A_\nu(r)|^{p-2}A_{\nu+1}^2(r)r^{2\nu+3}dr.
$$
Recalling that $Y_1$ is a multiple of $x_1+...+x_d$, the proof of the above identity spoils the excitement since it just amounts to compute $\frac{d^2}{d\ep^2}\zeta(e^{i\ep(x_1+...x_d)})|_{\ep=0}=\zeta''(1)(Y_1,Y_1)=0$, which vanishes since $\zeta(e^{i\ep(x_1+...x_d)})=0$ for all $\ep\in\cp$. In fact, by the same argument the above identity must be true as long as both sides are finite, that is, $p>\frac{2d}{d-1}$.

Theorem \ref{thm:general} now amounts to show that the quadratic form $\zeta''(1)$ is coercive on $(T_1\CC)^\perp$. It is easy to see that this quadratic form is naturally in block form because of formula \eqref{hamonicextension}. Let $f=\sum_{k\geq 2} a_k Y_k \in (T_1 \CC)^\perp$, where $Y_k$ is an orthonormal set (for the surface measure of $\s^{d-1}$) of real spherical harmonics. Define the numbers
$$
c_k = \int_0^\infty |A_\nu(r)|^{p-2}A_{\nu+k}^2(r)r^{2\nu+1+2k}dr.
$$
Noticing that by \eqref{def:Sconst} we have $\|\ft \sigma\|_{L^p(\r^d)}^{2-p}=(2\pi)^{-dp/2}c_0^{-1}\S_d^{2}$, we can use Appendix \ref{app:abstvar} to obtain
\begin{align*}
& c_0\S_d^{-2}\zeta''(1)(f,f) 
\\ & =   2c_0\|f\|^2_{L^2(\s^{d-1})}  - (2\pi)^{-pd/2}\left[p\int_{\r^d} |\ft \si|^{p-2}|\ft{\si f}|^2 + {(p-2)}\Re\int_{\r^d} |\ft \si|^{p-4}(\overline{\ft \si})^2(\ft{\si f})^2\right] \\
& = 2 c_0\sum_{k\geq 2} |a_k|^2 -  \sum_{k\geq 0} c_k(p|a_k|^2  +(p-2) (-1)^k\Re (a_k^2))
\\ & \geq 2\sum_{k\geq 2} (c_0-(p-1)c_k)|a_k|^2
\end{align*}
Therefore we have to show that $(1-\ep)c_0>(p-1)c_k$ for all $k\geq 2$ and some $0<\ep<1$.

\noindent {\bf Step 2.}  The strategy now amounts to derive a decreasing upper bound for $c_k$ and showing that $(p-1)c_k<c_0$ for $k\geq k_0$, where $k_0$ is small but depends on $d$, and then verify each case $k<k_0$ by numerical integration.

\begin{table}[h!]
\centering

\begin{tabular}{ ||c|c|} 
 \hline
 $d$ & $<\widetilde{c}_0$\\
 \hline
2 &  0.33677 \\ 
 \hline
\end{tabular}
\smallskip

\begin{tabular}{ ||c|c|c|} 
 \hline
 $d$ &  ${(p-1)b_8}<$ & $<\widetilde{c}_0$\\
 \hline
3 & 0.29767 & 0.31822 \\ 
 \hline
\end{tabular}
\smallskip

\begin{tabular}{ ||c|c|c|} 
 \hline
 $d$ &  ${(p-1)b_5}<$ & $<\widetilde{c}_0$\\
 \hline
4 & 0.25084 & 0.25803 \\ 
5 & 0.18775 & 0.21158 \\
 \hline
\end{tabular}
\smallskip

\begin{tabular}{ ||c|c|c||c|c|c||c|c|c|} 
 \hline
 $d$ &  ${(p-1)b_4}<$ & $<\widetilde{c}_0$  &  $d$ & ${(p-1)b_4}<$ & $<\widetilde{c}_0 $ &  $d$ & ${(p-1)b_4}<$ & $<\widetilde{c}_0$\\
 \hline
6 & 0.17363 & 0.17776 & 25 & 0.03979 & 0.04134 & 44 & 0.02272 & 0.02311 \\ 
7 & 0.14595 & 0.15265 & 26 & 0.03827 & 0.03970 & 45 & 0.02222 & 0.02258 \\ 
8 & 0.12624 & 0.13347 & 27 & 0.03687 & 0.03818 & 46 & 0.02174 & 0.02208 \\ 
9 & 0.11141 & 0.11842 & 28 & 0.03556 & 0.03678 & 47 & 0.02128 & 0.02160 \\ 
10 & 0.09983 & 0.10632 & 29 & 0.03435 & 0.03548 & 48 & 0.02084 & 0.02114 \\ 
11 & 0.09050 & 0.09641 & 30 & 0.03321 & 0.03426 & 49 & 0.02042 & 0.02069 \\ 
12 & 0.08282 & 0.08815 & 31 & 0.03215 & 0.03312 & 50 & 0.02001 & 0.02027 \\ 
13 & 0.07637 & 0.08117 & 32 & 0.03116 & 0.03206 & 51 & 0.01962 & 0.01986 \\ 
14 & 0.07087 & 0.07520 & 33 & 0.03022 & 0.03106 & 52 & 0.01925 & 0.01947 \\ 
15 & 0.06613 & 0.07003 & 34 & 0.02934 & 0.03012 & 53 & 0.01889 & 0.01909 \\ 
16 & 0.06200 & 0.06551 & 35 & 0.02851 & 0.02924 & 54 & 0.01854 & 0.01873 \\ 
17 & 0.05836 & 0.06154 & 36 & 0.02772 & 0.02840 & 55 & 0.01820 & 0.01838 \\ 
18 & 0.05513 & 0.05801 & 37 & 0.02698 & 0.02761 & 56 & 0.01788 & 0.01804 \\ 
19 & 0.05224 & 0.05486 & 38 & 0.02628 & 0.02687 & 57 & 0.01757 & 0.01772 \\ 
20 & 0.04964 & 0.05203 & 39 & 0.02561 & 0.02616 & 58 & 0.01727 & 0.01740 \\ 
21 & 0.04730 & 0.04948 & 40 & 0.02497 & 0.02549 & 59 & 0.01698 & 0.01710 \\ 
22 & 0.04516 & 0.04716 & 41 & 0.02437 & 0.02485 & 60 & 0.01669 & 0.01681 \\ 
23 & 0.04322 & 0.04505 & 42 & 0.02379 & 0.02424 &  &  &  \\ 
24 & 0.04143 & 0.04311 & 43 & 0.02324 & 0.02366 &  &  &  \\
 \hline
\end{tabular}
\bigskip

\caption{Numerical integration of $\widetilde{c}_0$ with error $< 10^{-5}$ and rounded down to ~5 digits. Evaluation of ${(p-1)b_k}$ rounded up to ~5 digits.}
\label{table:1}
\end{table}

\begin{table}[h!]
\centering

\begin{tabular}{ ||c|c|c|} 
 \hline
$d$ &  $(p-1) \times c_2,...,c_6< $ & $<\widetilde{c}_0$\\
 \hline
2 & 0.18707, 0.12746, 0.09661, 0.07813, 0.06546 & 0.33677 \\ 
 \hline
\end{tabular}
\smallskip

\begin{tabular}{ ||c|c|c|} 
 \hline
$d$ &  $(p-1) \times c_2,...,c_7< $ & $<\widetilde{c}_0$\\
 \hline
3 & 0.19229, 0.13788, 0.10741, 0.08827, 0.07476, 0.06512 & 0.31822 \\ 
 \hline
\end{tabular}
\smallskip

\begin{tabular}{ ||c|c|c|} 
 \hline
$d$ &  $(p-1) \times c_2,c_3,c_4< $ & $<\widetilde{c}_0$\\
 \hline
4 & 0.17022, 0.12724, 0.10159 & 0.25803 \\ 
5 & 0.14910, 0.11524, 0.09395 & 0.21158\\
 \hline
\end{tabular}
\smallskip

\begin{adjustbox}{width=1\textwidth}
\begin{tabular}{||c|c|c||c|c|c||c|c|c|}
 \hline
 $d$ &  $(p-1) \times c_2,c_3< $ & $<\widetilde{c}_0$ & $d$ &$(p-1) \times c_2,c_3 < $ & $<\widetilde{c}_0$ & $d$ & $(p-1) \times c_2,c_3< $ & $<\widetilde{c}_0$\\
 \hline
6 & 0.13163, 0.10454 & 0.17776 & 25 & 0.03870, 0.03595 & 0.04134 & 44 & 0.02262, 0.02167 & 0.02311 \\ 
7 & 0.11744, 0.09536 & 0.15265 & 26 & 0.03730, 0.03474 & 0.03970 & 45 & 0.02214, 0.02123 & 0.02258 \\ 
8 & 0.10583, 0.08754 & 0.13347 & 27 & 0.03600, 0.03361 & 0.03818 & 46 & 0.02168, 0.02081 & 0.02208 \\ 
9 & 0.09621, 0.08084 & 0.11842 & 28 & 0.03478, 0.03255 & 0.03678 & 47 & 0.02124, 0.02040 & 0.02160 \\ 
10 & 0.08815, 0.07505 & 0.10632 & 29 & 0.03365, 0.03156 & 0.03548 & 48 & 0.02081, 0.02001 & 0.02114 \\ 
11 & 0.08130, 0.07002 & 0.09641 & 30 & 0.03259, 0.03063 & 0.03426 & 49 & 0.02041, 0.01963 & 0.02069 \\ 
12 & 0.07541, 0.06561 & 0.08815 & 31 & 0.03159, 0.02974 & 0.03312 & 50 & 0.02001, 0.01927 & 0.02027 \\ 
13 & 0.07031, 0.06171 & 0.08117 & 32 & 0.03065, 0.02891 & 0.03206 & 51 & 0.01964, 0.01892 & 0.01986 \\ 
14 & 0.06585, 0.05824 & 0.07520 & 33 & 0.02977, 0.02813 & 0.03106 & 52 & 0.01927, 0.01859 & 0.01947 \\ 
15 & 0.06191, 0.05514 & 0.07003 & 34 & 0.02894, 0.02739 & 0.03012 & 53 & 0.01893, 0.01826 & 0.01909 \\ 
16 & 0.05841, 0.05235 & 0.06551 & 35 & 0.02815, 0.02668 & 0.02924 & 54 & 0.01859, 0.01795 & 0.01873 \\ 
17 & 0.05529, 0.04982 & 0.06154 & 36 & 0.02741, 0.02601 & 0.02840 & 55 & 0.01826, 0.01765 & 0.01838 \\ 
18 & 0.05248, 0.04753 & 0.05801 & 37 & 0.02670, 0.02537 & 0.02761 & 56 & 0.01795, 0.01735 & 0.01804 \\ 
19 & 0.04994, 0.04544 & 0.05486 & 38 & 0.02603, 0.02477 & 0.02687 & 57 & 0.01765, 0.01707 & 0.01772 \\ 
20 & 0.04763, 0.04353 & 0.05203 & 39 & 0.02539, 0.02419 & 0.02616 & 58 & 0.01736, 0.01680 & 0.01740 \\ 
21 & 0.04553, 0.04176 & 0.04948 & 40 & 0.02478, 0.02364 & 0.02549 & 59 & 0.01707, 0.01653 & 0.01710 \\ 
22 & 0.04361, 0.04014 & 0.04716 & 41 & 0.02421, 0.02312 & 0.02485 & 60 & 0.01680, 0.01628 & 0.01681 \\ 
23 & 0.04184, 0.03864 & 0.04505 & 42 & 0.02365, 0.02261 & 0.02424 & & & \\ 
24 & 0.04021, 0.03725 & 0.04311 & 43 & 0.02313, 0.02213 & 0.02366 & & & \\
 \hline
\end{tabular}
\end{adjustbox}
\bigskip

\caption{We use the bound $|J_\nu(r)|\leq r^{-\frac12}$ for $r\geq \frac32 \nu$ and $\nu \geq \frac12$, and  $|J_0(r)|\leq {r^{-\frac12}}$ for $r>0$ (see \cite[Lemma 8]{COS} and \cite[Corollary 2.8(a)]{OT} respectively), which shows that
$
\int_{2000}^\infty  |A_\nu(r)|^{p-2}A_{\nu+k}^2(r)r^{2\nu+1+2k}dr \leq  \int_{2000}^\infty  r^{-2}dr = 5\times 10^{-4}
$
if $d\geq 2$ and $k\geq 2$. We then numerically evaluate the integral  $\widetilde{c}_k:= \int_0^{2000}  |A_\nu(r)|^{p-2}A_{\nu+k}^2(r)r^{2\nu+1+2k}dr$ with error $\leq 10^{-5}$, and thus
$$
(p-1){c}_k\leq (p-1)(\widetilde{c}^{\text{ numeric}}_k+10^{-5}+5\times 10^{-4}).
$$
We then round up the right hand side quantity to $5$ digits.  Column $\widetilde{c}_0$ is copied from Table \ref{table:1} to make the inequalities visible.}
\label{table:2}
\end{table}


The only inequality we need to use is due to Landau \cite{La}:
$$
L:=0.7857468705 > r^{1/3}|J_{\al}(r)|, \text{ for all } r,\al\geq 0.
$$
We obtain
$$
c_k < L^{p-2}\int_0^\infty J_{\nu+k}(r)^2 \frac{dr}{r^\la}=\frac{L^{p-2}\, \Gamma(\la) \Gamma(\nu+k+ (1 - \la)/2)}{2^\la \Gamma((1 + \la)/2)^2 \Gamma(\nu+k + (1 + \la)/2)} =:b_k
$$
where $\la=\frac{3d-5}{3d-3}$ and the last identity is \cite[Eq. 6.574-2]{GR}. It is not hard to show, using the properties of the Gamma function, that $\frac{d}{dk}b_k<0$, and therefore $b_k$ is a decreasing sequence. Also, a routine application of Stirling's asymptotic formula shows that $b_k \approx k^{-\la}$ as $k\to\infty$, and therefore we only need to show that $(p-1)c_k<c_0$ for all $k\geq 2$.
We obtain
$$
\frac{(p-1)c_k}{c_0} < \frac{(p-1)b_{k'}}{\widetilde{c}_0}, \ \  (2\leq k'\leq k)
$$
where 
$$
\widetilde{c}_0=\int_0^{2000} |J_\nu(r)|^p \frac{dr}{r^{\frac{d-3}{d-1}}}.
$$
Numerical evaluation of ${(p-1)b_k}$ and ${\widetilde{c}_0}$ produces Table \ref{table:1}. It shows that $\frac{(p-1)b_8}{\widetilde{c}_0}<1$ if $d=3$, $\frac{(p-1)b_5}{\widetilde{c}_0}<1$ if $d=4,5$ and $\frac{(p-1)b_4}{\widetilde{c}_0}<1$ if $6\leq d\leq 60$. If $d=2$ the above bound gives $\frac{(p-1)b_{283}}{\widetilde{c}_0}<1$, which is terrible. Instead, we can use \cite[Theorem ~10]{CFOT} to deduce directly that $\frac{(p-1)c_k}{\widetilde{c}_0}<1$ for all $k\geq 7$. Finally, numerical approximation of $(p-1)c_k$ produces Table \ref{table:2}, which shows that $(p-1)c_k<\widetilde{c}_0<c_0$ for $k=2,...,6$ if $d=2$, $k=2,...,7$ if $d=3$, $k=2,3,4$ if $d=4,5$ and $k=2,3$ if $6\leq d\leq 60$. This completes the proof.



\appendix

\section{First and second variation of the abstract deficit functional}\label{app:abstvar}
We consider here the abstract deficit functional, introduced in Section~\ref{sec:method}; 
\begin{equation}\label{eq:AbstrDefAppendix}
    \begin{array}{cc} 
    \displaystyle
    \psi(f)=
   C_\star^2 \langle f|f\rangle - \left(\int_{\mathbb R^N} |Sf|^p\right)^\frac2p, 
     & \text{where } C_\star = \frac{\lVert Sf_\star\rVert_{L^p}}{\lVert f_\star \rVert}.
    \end{array}
\end{equation}
    Here $p>2$ and $S\colon \mathcal H\to L^p(\mathbb R^N)$ is a bounded operator on the Hilbert space $\mathcal H$, whose scalar product we have denoted by $\langle\cdot|\cdot\rangle$, and whose norm is given by $\norm{f}=\sqrt{\langle f|f\rangle}$. Also, $f_\star$ is a fixed element of $\mathcal H$.
We want to compute the following linear and quadratic forms; 
\begin{equation}\label{eq:VariationRecall}
    \begin{array}{ccc}
        \psi'(f_\star)f:=\left.\frac{\partial}{\partial \epsilon}\right|_{\epsilon=0} \psi(f_\star + \epsilon f), & \psi''(f_\star)(f,f):=\left.\frac{\partial^2}{\partial \epsilon^2}\right|_{\epsilon=0} \psi(f_\star + \epsilon f),
        &\forall f\in \mathcal H.
    \end{array}
\end{equation}
Now, since 
\begin{equation}\label{eq:norm_expansion}
    \langle f_\star + \epsilon f|f_\star + \epsilon f\rangle = \|f_\star\|^2+2\epsilon \Re\langle f_\star |f\rangle +{\epsilon^2}\norm{f}^2, 
\end{equation}
and  
\begin{align}\label{eq:integral_expansion}
    \begin{split}
       & \left(\int_{\mathbb R^N} \lvert S(f_\star + \epsilon f)\rvert^p\right)^\frac2p \\ &= \|Sf_\star\|_{L^p}^2+ 2\epsilon \|Sf_\star\|_{L^p}^{2-p}\Re \int_{\mathbb R^N} \lvert Sf_\star \rvert^{p-2}\overline{S f_\star}Sf \\ 
        & \ \ \, +\frac{\epsilon^2}{2}\|Sf_\star\|_{L^p}^{2-p}\left( p\int_{\mathbb R^N} \abs{Sf_\star}^{p-2}\abs{Sf}^2 + (p-2)\Re \int_{\mathbb R^N} \abs{Sf_\star}^{p-4} (\overline{Sf_\star})^2 (Sf)^2 \right) \\
        & \ \ \, + \frac{\epsilon^2}{2} \|Sf_\star\|_{L^p}^{2-2p}\left[ 2(2-p)\left( \Re\int_{\mathbb R^N} \abs{Sf_\star}^{p-2} \overline{Sf_\star}Sf\right)^2 \right] + o(\epsilon^2),
    \end{split}
\end{align}
we conclude that 
\begin{equation}\label{eq:app_first_variation}
    \psi'(f_\star)f=2C_\star^2\Re\langle f_\star|f\rangle -2\|Sf_\star\|_{L^p}^{2-p}\Re \int_{\mathbb R^N}\abs{Sf_\star}^{p-2} \overline{Sf_\star} Sf, 
\end{equation}
and that 
\begin{equation}\label{eq:app_second_variation}
    \begin{split}
       & \psi''(f_\star)(f, f) \\ & =2C_\star^2\norm{f}^2-  p\|Sf_\star\|_{L^p}^{2-p}\int_{\mathbb R^N} \abs{Sf_\star}^{p-2} \abs{Sf}^2 - (p-2)\|Sf_\star\|_{L^p}^{2-p}\Re \int_{\mathbb R^N} \abs{Sf_\star}^{p-4}(Sf_\star)^2(Sf)^2 \\ 
        & \ \ \, -2(2-p)\|Sf_\star\|_{L^p}^{2-2p}\left(\Re  \int_{\mathbb R^N} \abs{ Sf_\star}^{p-2} \overline{S f_\star} Sf\right)^2.
    \end{split}
\end{equation}
We immediately see from these two equations that, if $\psi'(f_\star)f=0$ and $\Re \langle f_\star| f\rangle =0$, then the last term in~\eqref{eq:app_second_variation} vanishes.

\section*{Acknowledgements}
 F.\@G.\@ acknowledges support from the Deutsche Forschungsgemeinschaft through the Collaborative Research Center 1060. G.\@N.\@ acknowledges support from the EPSRC New Investigator Award \emph{Sharp Fourier  Restriction  Theory},  grant  no. ~EP/T001364/1, and from the ERCEA Adv. Grant 2014 669689-HADE. Both authors are grateful to Diogo Oliveira e Silva for the stimulating conversations.


\begin{thebibliography}{100}


\bibitem{BezRogers} N. Bez and K. M. Rogers, A sharp Strichartz estimate for the wave equation with data in the energy space, J. Eur. Math. Soc. 15 (2013), issue 3, 805-823.

\bibitem{BiEg} G. Bianchi and H. Egnell, A note on the Sobolev inequality, J. Funct. Anal. 100 (1991),  issue 1, 18-24.

\bibitem{BeVa}
      P. B\'egout and A. Vargas, Mass concentration phenomena for the $ L^2$-critical nonlinear Schrödinger equation, Trans. Amer. Math. Soc. 359 (2007), 5257-5282.


  
  \bibitem{CFOT}
  E. Carneiro, D. Foschi, D. Oliveira e Silva and C. Thiele,
A sharp trilinear inequality related to Fourier restriction on the circle,
Revista Matematica Iberoamericana 33 (2017), issue 4, 1463-1486.

\bibitem{COS}
E. Carneiro, D. Oliveira e Silva and M. Sousa,
Sharp mixed norm spherical restriction,
Advances in Mathematics 341 (2019), 583-608.

\bibitem{ChSh12}
M. Christ and S. Shao, 
Existence of extremals for a Fourier restriction inequality, 
Anal.~PDE 5 (2012), pp. 261–312.

 
 

\bibitem{Fo} 
D. Foschi, 
 Maximizers for the Strichartz inequality, 
 J. Eur. Math. Soc. 9 (2007), 739-774.

\bibitem{Fo2} 
D. Foschi, 
Global maximizers for the sphere adjoint Fourier restriction inequality,
 Journal of Functional Analysis 268 (2015), issue 3, 690-702.

\bibitem{FO} 
D. Foschi and D. Oliveira e Silva,
Some recent progress on sharp Fourier restriction theory,  Anal. Math 43 (2017), 241-265.

\bibitem{Gon}
F. Gon\c{c}alves,
 Orthogonal polynomials and sharp estimates for the Schr\"odinger Equation, Int. Math. Res. Not. 2019, issue 8, 2356-2383.


\bibitem{GR}
I. S. Gradshteyn and I. M. Ryzhik,
\emph{Table of integrals, series, and products.}
Translated from the Russian. Seventh edition. Elsevier/Academic Press, Amsterdam, (2007).
 
 \bibitem{G}
 A. Greenleaf,
 Principal Curvature and Harmonic Analysis,
Indiana University Mathematics Journal 30 (1981), no. 4, 519-537.

 
 \bibitem{Horm}L.~H\"ormander, Lectures on nonlinear hyperbolic differential equations, Vol.~26, Springer-Verlag, 1997.

 
 \bibitem{La}
 L. J. Landau, Bessel functions: monotonicity and bounds, J. London Math. Soc. 61 (2000), issue 1, 197-215.
 
 \bibitem{Mor02}
	C. Morpurgo, Sharp inequalities for functional integrals and traces of conformally invariant
operators, 
Duke Math. J. 114 (2002), 477-553.

\bibitem{Negro18}
G.~Negro, A sharpened Strichartz inequality for the wave equation, preprint arXiv:1802.04114.

\bibitem{OT}
D. Oliveira e Silva and C. Thiele,
Estimates for certain integrals of products of six Bessel functions 33 (2017), issue 4, 1423-1462.

\bibitem{Ramos} J. Ramos, A refinement of the Strichartz inequality for the wave equation with applications, Adv. Math. 230 (2012), no 2, 649-698.

\bibitem{S} 
E. M. Stein, 
{\it Harmonic analysis: real-variable methods, orthogonality, and oscillatory integrals}, Princeton Mathematical Series, 43. Monographs in Harmonic Analysis, III. Princeton University Press,
Princeton, NJ, 1993.



\bibitem{Sz}
G. Szeg\"o,
 {\it Orthogonal polynomials},
 American Mathematical Society Colloquium Publications Volume XXIII, Fourth Edition, 1975.

\bibitem{Tao}
T. Tao,
 {\it Nonlinear dispersive equations: Local and global analysis}, 
 CBMS Regional Conference Series in Mathematics 106.
 
\bibitem{Tao2} 
T. Tao,
A pseudoconformal compactification of the nonlinear Schr\"odinger equation and applications.
New York J. Math. 15 (2009) 265-282, 


\bibitem{To} 
P. Tomas, A restriction theorem for the Fourier transform, Bull. Amer. Math. Soc. 81 (1975), 477–478.



\end{thebibliography}
\end{document}